\documentclass[11pt,twoside]{amsart}
\usepackage{amssymb,amsmath}
\usepackage{amscd}
\usepackage{amsthm}
\usepackage{latexsym}
\usepackage[noadjust]{cite}
\usepackage{mathrsfs}
\usepackage{indentfirst}
\usepackage{graphicx}
\usepackage{subfigure}
\usepackage{color}
\usepackage{enumerate}
\usepackage{tikz}
\usepackage{pgfplots}
\pgfplotsset{compat=1.18}
\usepackage{pgfplots}
\pgfplotsset{compat=1.18}
\usepgfplotslibrary{fillbetween}

\def\dfrac{\displaystyle\frac}

\newtheorem{theorem}{Theorem}[section]
\newtheorem{lemma}[theorem]{Lemma}
\newtheorem{proposition}[theorem]{Proposition}
\newtheorem{corollary}[theorem]{Corollary}
\theoremstyle{definition}
\newtheorem{definition}[theorem]{Definition}
\newtheorem{remark}[theorem]{Remark}
\newtheorem{example}[theorem]{Example}

\usepackage[colorlinks,
linkcolor=blue,       
anchorcolor=red,  
citecolor=blue,        
]{hyperref}

\def\Re{{\rm Re}}
\def\R{\mathbb{R}^3}
\def\H{H^1(\R,\mathbb{C})}
\def\HR{H^1(\R,\mathbb{R})}
\def\HA{H_A^1(\R,\mathbb{C})}
\def\I{\int_{\R}}
\def\N{\mathbb{N}}
\def\C{\mathbb{C}}

\def\ds{\displaystyle}
\title[]
{Bound states of  Schr\"odinger-Poisson system in electromagnetic fields}

\author[G. Ping]{Pingkang Guan}
\address{School of Mathematics, Jilin University, Changchun 130012, PR CHINA}
\email{pingkangguan24@gmail.com}

\author[B. Li]{Baihong Li}
\address{School  of Mathematics, Jilin University, Changchun 130012, PR CHINA}
\email{bhlimath@zohomail.com}

\author[D. Mugnai]{Dimitri Mugnai}
\address{Department of Ecological and Biological Sciences, University of Tuscia, Largo dell'Università 01100, Viterbo, Italy}
\email{dimitri.mugnai@unitus.it}

\author[Y. Wei]{Yuanhong Wei}
\address{School of Mathematics, Jilin University, Changchun 130012, PR CHINA}
\email{weiyuanhong@jlu.edu.cn}

\begin{document}

	\begin{abstract}
This paper investigates the Schrödinger–Poisson system involving electromagnetic fields. Our approach is mainly based
on the variational method.  By developing the global compactness result, we establish the existence of higher-energy bound state and ground state solutions under suitable assumptions. In addition, under symmetry conditions, the existence and multiplicity of intertwining solutions  are obtained for both finite and infinite symmetry group.
	\end{abstract}
	\maketitle	
	 {\it Keywords: {\rm Schrödinger–Poisson system, electromagnetic fields, bound state, ground state, intertwining solutions  } }
\\	\hspace{1cm}\\
	{Mathematics Subject Classification (2020):} 35A15, 35B06, 35J20, 35Q40
    \tableofcontents
	
\section{Introduction}

In this paper, we consider the following magnetic Schr\"odinger-Poisson system
\begin{equation}\label{problemSP}
	\left\{
	\begin{aligned}
		&	(-i\nabla+A(x))^2u+V(x)u+K(x)\phi u=|u|^{p-2}u,\quad&& \text{in}~\mathbb{R}^3,\\
		&-\Delta\phi=K(x)|u|^2,\quad&&\text{in}~\mathbb{R}^3,
	\end{aligned}
	\right.
\end{equation}
	where $i$ is the imaginary unit, $4<p<6$, 
	$A:\mathbb{R}^3\rightarrow\mathbb{R}^3$ is the magnetic potential, $V, K:\mathbb{R}^3\rightarrow\mathbb{R}$ are nonnegative functions. Here, the magnetic Schr\"odinger operator is defined as
	\begin{equation*}
		(-i\nabla+A(x))^2:=-\Delta-2iA(x)\cdot\nabla-i\mathrm{div}A(x)+|A(x)|^2.
	\end{equation*}
The problem \eqref{problemSP} is closely related to the time-dependent  magnetic Schr\"odinger-Poisson system
 \begin{equation}\label{time-dependent}
 	    \left\{
        \begin{aligned}
            &i\psi_t=(-i\nabla+A(x))^2\psi+W(x)\psi+K(x)\phi\psi-|\psi|^{p-2}\psi &&\text{in }\mathbb{R}\times\mathbb{R}^3,\\
            &-\Delta\phi=K(x)|\psi|^2&&\text{in }\mathbb{R}\times\mathbb{R}^3,
        \end{aligned}
        \right.
 \end{equation}
 where $W$ is a real electric potential and $A$ is a magnetic potential related to the magnetic field $B=\nabla\times A$.
Here, $\psi: \mathbb{R}\times\mathbb{R}^3\to\mathbb{C}$ denotes the time-dependent wave function,  and $\phi:\mathbb{R}\times\mathbb{R}^3\to\mathbb{R}$ represents an internal potential for a nonlocal self-interaction of the wave function $\psi$, and the nonlinear term describes the interaction effect among particles. In this paper we investigate the standing wave solutions to \eqref{time-dependent} of the form $\psi(t,x)=e^{-i\mu t}u(x)$. Under the substitution $W(x)=V(x)+\mu$, the problem \eqref{time-dependent} reduces to the stationary system \eqref{problemSP}.

When $A(x)\equiv0$, \eqref{problemSP}  becomes the following Schr\"odinger-Poisson system 
\begin{equation}\label{SP}
	\left\{
	\begin{aligned}
		&	-\Delta u+V(x)u+K(x)\phi u=|u|^{p-2}u,\quad&& \text{in}~\mathbb{R}^3,\\
		&-\Delta\phi=K(x)u^2,\quad&&\text{in}~\mathbb{R}^3,
	\end{aligned}
	\right.
\end{equation}
which has attracted considerable attention due to its important applications in quantum mechanics and semiconductor physics \cite{A1996, B1981}. D'Aprile-Mugnai \cite{DM20041, DM2004} and later Ruiz \cite{R2006} considered the problem
\begin{equation*}
	\left\{
	\begin{aligned}
		&	-\Delta u+u+\lambda\phi u=u^p,\quad&& \text{in}~\mathbb{R}^3,\\
		&-\Delta\phi=u^2,\quad&& \text{in}~\mathbb{R}^3,
	\end{aligned}
	\right.
\end{equation*}
where $\lambda>0, 1<p<5$.  Depending on the parameters $\lambda$ and $p$, the existence and nonexistence results were obtained by Nehari-Pohozaev identity. When $V(x)\equiv1$, Cerami and Vaira \cite{CV2010} studied the system \eqref{SP} with the nonlinearity $|u|^{p-2}u$ replaced by $a(x)|u|^{p-2}u$, where $K(x)$ and $a(x)$ are nonnegative functions satisfying certain asymptotic behaviors. By means of a compactness lemma, they established both a ground state solution and a higher-energy bound state solution. Mugnai \cite{M2011} invesigated the Schr\"odinger-Poisson system
\begin{equation*}
	\left\{
	\begin{aligned}
		&	-\Delta u+\omega u-\lambda\phi u+W_u(x,u)=0,\quad&& \text{in}~\mathbb{R}^3,\\
		&-\Delta\phi=u^2,\quad&& \text{in}~\mathbb{R}^3,
	\end{aligned}
	\right.
\end{equation*}
where $W$ is a positive potential that generalizes the pure power case $|u|^p/p$ with $p>2$. Under suitable assumptions on $W$ and depending on the parameters $\lambda$, $\omega$, the results on the existence, nonexistence, positivity, and multiplicity of solutions are given.
For further research on the Schr\"odinger-Poisson system, we refer the readers to previous studies \cite{A2008, HZ2012, AP2008, WZ2007,YSD2009, JFZ2017, CM2019} and the references therein.

When $A(x)\not\equiv0$, the magnetic Schr\"odinger equation has also attracted considerable attention in recent years. Esteban and Lions \cite{EL1989} studied the following problem
\begin{align*}
    (-i\nabla+A(x))^2u+V(x)u=f(|u|^2)u.
\end{align*}
For the cases $N=2,3$, they established the existence of solutions by applying the concentration compactness lemma to suitable minimization problems. 
 Arioli and Szulkin \cite{AS2003} considered both the Sobolev critical and subcritical cases.
In the critical case, they employed a constrained minimization method, and for the subcritical case, they used a minimax argument to obtain the existence of nontrivial solutions under the assumption that $\sigma(-\Delta_A+V)\subset(0,\infty)$. Liu et al. \cite{LLJ2021} considered the magnetic Schr\"odinger-Poisson type equation
\begin{equation}\label{hartree}
     \left(\frac{\epsilon}{i}\nabla-A(x)\right)^2u+V(x)u+\epsilon^{-2}(|x|^{-1}*u^2)u=f(|u|^2)u,\quad\text{in}~\mathbb{R}^3.
\end{equation}
By combining variational methods, penalization techniques, and Ljusternik–Schnirelmann theory, they proved the multiplicity and concentration of solutions for sufficiently small $\epsilon>0$.
For more research on nonlinear partial differential equations involving magnetic fields, we refer the readers to \cite{DL2013, AF2011, CS2005, DAprileWei, DW2011, dJ2022, LZ2023, MJ2021, JZR2022, JR2020, Ad2018, A2010} and the references therein.

Motivated by the aforementioned works, the present paper is devoted to investigating the magnetic Schr\"odinger-Poisson system \eqref{problemSP} under the following hypotheses on $A$, $V$ and $K$ respectively:
\begin{enumerate}
	\item[$(A_1)$]$A\in L^\infty(\mathbb{R}^3,\mathbb{R}^3)$, $\lim\limits_{|x|\rightarrow\infty}A(x)=0$.
	\item[$(V_1)$]$V\in L^\infty(\mathbb{R}^3,\mathbb{R})$, $V\geq0$,  $\lim\limits_{|x|\rightarrow\infty}V(x)=V_\infty>0$.
     \item [$(K_1)$] $K\in L^2(\mathbb{R}^3,\mathbb{R})$, $K\geq0$, $\lim\limits_{|x|\rightarrow\infty}K(x)=0$.
\end{enumerate}

It is worth noticing that the presence of the potential $K$ vanishing at infinity is a novelty, if compared to the previous papers. Indeed, in the cited papers the related Hartree equation, like the one in \eqref{hartree}, can be re-written as system \eqref{problemSP} with $K(x)\equiv 1$ - or as similar systems with different powers but always with $K(x)\equiv 1$, like in \cite{LZ2023}. Thus, looking at the second equation in \eqref{problemSP}, with the suitable functional setting to be described below, we have
\[
\int_{\R}|\nabla \phi|^2dx=\int_{\R}K(x)|u|^2\phi\,dx.
\]
If, for instance, $K\equiv0$ in some region $\mathcal F$, we have that the norm of $\phi$ in $\R$ is somehow concentrated on the support of $K$, losing all the related information in $\mathcal F$: if $K\equiv 0$ out of a ball, we lose all the information at infinity, which is unnatural in the classical Sobolev setting. However, with the suitable spaces, a variational approach is still valid.

For this, we set
	$$\nabla_Au:=-i\nabla u+A(x)u$$ 
and introduce the Hilbert spaces $	\HA$ defined by
	\begin{equation*}
	\HA:=\left\{u\in L^2(\R,\mathbb{C}):\nabla_Au\in L^2(\R,\mathbb{C}^3)\right\}
\end{equation*}
and $\mathcal{D}^{1,2}(\mathbb{R}^3,\mathbb{R})$ defined by
\begin{equation*}
    \mathcal{D}^{1,2}(\mathbb{R}^3,\mathbb{R}):=\{u\in L^6(\R,\mathbb{R}):\nabla u\in L^2(\R,\mathbb{R}^3)\},
\end{equation*}
which will be described better in Section \ref{sec2}.

We will prove that there exists a unique $\phi=\phi_{|u|}\in\mathcal{D}^{1,2}(\mathbb{R}^3,\mathbb{R})$ such that
    \begin{align*}
        -\Delta\phi=K(x)|u|^2;
    \end{align*}
thus, $(u,\phi_{|u|})\in \HA\times \mathcal{D}^{1,2}(\mathbb{R}^3,\mathbb{R})$ is a weak solution of \eqref{problemSP} if and only if $u\in \HA$ is a weak solution of the following equation
    \begin{equation*}\label{problem}\tag{$P$}
        (-i\nabla+A(x))^2u+V(x)u+K(x)\phi_{|u|} u=|u|^{p-2}u,\quad\text{in}~\mathbb{R}^3.
    \end{equation*}
    Hereafter, we consider problem \eqref{problem} instead of \eqref{problemSP}.
    
In the sequel, we investigate the existence of {\sl bound state} solutions of \eqref{problem}, namely  solutions of \eqref{problem} with finite energy. In particular, among bound states, a nontrivial solution with the least energy  is called a 
 {\sl ground state}.
 
Now, we give the following qualitative results, in which the assumptions of potentials $A$, $V$ and $K$ are introduced to ensure the existence of a bound state solution.
\begin{theorem}\label{cor:fixed-AV}
Assume that $(A_1)$, $(V_1)$ and $(K_1)$ hold. Then there exists $\Lambda_0>0$ such that for any $\bar\Lambda\in(0,\Lambda_0)$ there exists 
$\delta=\delta(\bar\Lambda)>0$  such that if $A$ and $V$ satisfy
\begin{equation*}
V_\infty<|A(x)|^2+V(x)<V_\infty+\bar\Lambda
\quad \text{for } x\in\mathbb{R}^3,
\end{equation*}
and
\begin{equation*}
    |K|_2<\delta,
\end{equation*}
then $(P)$ has a bound state solution.
\end{theorem}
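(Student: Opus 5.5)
The plan follows the Cerami--Vaira/Benci--Cerami scheme for higher-energy bound states. The limit problem at infinity is the real scalar equation
\[
-\Delta w+V_\infty w=|w|^{p-2}w\quad\text{in }\R ,
\]
since $A\to0$, $V\to V_\infty$ and $K\to0$ by $(A_1)$, $(V_1)$, $(K_1)$, so the nonlocal term disappears at infinity. Let $w$ be its positive radial ground state, $m_\infty$ its mountain-pass/Nehari level and $S_\infty$ the associated Nehari constant. We work with the functional
\[
I(u)=\tfrac12\I(|\nabla_Au|^2+V|u|^2)+\tfrac14\I K\phi_{|u|}|u|^2-\tfrac1p\I|u|^p
\]
on $\HA$, constrained to the Nehari manifold $\mathcal N$. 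Because $4<p<6$, every $u\neq0$ has a unique projection $t_uu\in\mathcal N$, and $\mathcal N$ is a natural constraint.

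\textbf{Step 1: global compactness (splitting lemma).} For any Palais--Smale sequence $(u_n)\subset\HA$ of $I$, there exist a solution $u_0$ of $(P)$, an integer $k\ge 0$, solutions $w^j$ of the limit problem, sequences $|y_n^j|\to\infty$ and phases $\theta_n^j$ such that
\[
u_n-u_0-\sum_j e^{i\theta_n^j}w^j(\cdot-y_n^j)\to0,\qquad I(u_n)\to I(u_0)+\sum_j I_\infty(w^j).
\]
The proof goes through the Brezis--Lieb splitting and the local compactness of $u\mapsto\I K\phi_{|u|}|u|^2$. Since $K\in L^2$ and $K\to0$, we have $\I K\phi_{|u_n|}|u_n|^2\to\I K\phi_{|u_0|}|u_0|^2$ for $u_n\rightharpoonup u_0$: split $K=K\chi_{B_R}+K\chi_{B_R^c}$, use $|K\chi_{B_R^c}|_2$ small together with the bound $\|\phi_{|u|}\|_{\mathcal D^{1,2}}\le C|K|_2\|u\|^2$, and use Rellich on $B_R$. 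The translated magnetic terms vanish since $A(\cdot+y_n)\to0$ locally. As a consequence, PS holds in $(m_\infty,2m_\infty)$ whenever no solution exists at that level and the level is not attained; in particular $I|_{\mathcal N}$ satisfies PS in $(m_\infty,2m_\infty)$ unless a nontrivial solution is produced.

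\textbf{Step 2: non-attainment and the minimax.} The assumption $|A|^2+V>V_\infty$ combined with the diamagnetic inequality $|\nabla|u||\le|\nabla_Au|$ gives
\[
\inf_{\mathcal N} I\ge m_\infty .
\]
If this infimum is attained, or if it lies below $m_\infty$, we already have a solution. Otherwise it equals $m_\infty$ and is not attained. In that case we use a barycenter map $\beta(u)=\int \frac{x}{|x|}|u|^2/\int|u|^2$ (or a cut-off variant) and show that
\[
\inf\{I(u):u\in\mathcal N,\ \beta(u)=0\}>m_\infty ,
\]
arguing by contradiction with Step 1, since a minimizing sequence would have to split as one bump escaping to infinity, incompatible with $\beta=0$. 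Next we define
\[
\Sigma_\rho=\{t(\rho z)\,w(\cdot-\rho z):z\in S^2\},
\]
the Nehari projections of translated ground states, and aim to show that $\max_{\Sigma_\rho} I<2m_\infty$ (even $<\inf_{\beta=0}I$). A linking/degree argument between $\beta^{-1}(0)$ and the family $\{t\,w(\cdot-\rho z)+(1-t)w:\ldots\}$ then gives a critical level in $(m_\infty,2m_\infty)$, which is a bound state by Step 1.

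\textbf{Main obstacle.} The hardest part is the quantitative energy estimate on $\Sigma_\rho$, and this is exactly where $\bar\Lambda$ and $\delta$ enter. Testing $w(\cdot-y)$ with the real-valued $w$ gives
\[
I(t\,w(\cdot-y))\le I_\infty(t\,w)+\tfrac{t^2}{2}\bar\Lambda|w|_2^2+\tfrac{t^4}{4}C|K|_2^2\|w\|^4 ,
\]
because $\I|\nabla_A w|^2=\I|\nabla w|^2+\I|A|^2w^2$ for real $w$, and this term combines with $V$ into the bound $|A|^2+V<V_\infty+\bar\Lambda$. Choosing $\Lambda_0$ so that $\bar\Lambda<\Lambda_0$ keeps the combination $\max_t I<2m_\infty$ minus a margin, and then taking $\delta(\bar\Lambda)$ small gives a uniform bound $\max_{\Sigma_\rho}I<2m_\infty$ for every $\rho$. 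The same smallness is needed to compare with $\inf_{\beta=0}I$, which I would handle by keeping the linking level in $(m_\infty,2m_\infty)$ where Step 1 guarantees compactness, rather than comparing precisely.
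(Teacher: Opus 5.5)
Your architecture matches the paper's: the splitting lemma, the trichotomy ``$m_A$ attained / $m_A<m_0$ / $m_A=m_0$ not attained'', a barycenter level $b>m_0$, single-bump test functions $t_y\omega(\cdot-y)$, a degree argument, and the estimate $E_A(t\omega(\cdot-y))\le E_0(t\omega)+\frac{t^2}{2}\bar\Lambda|\omega|_2^2+\frac{t^4}{4}S^{-3}|K|_2^2\|\omega\|_A^4$. Here your $m_\infty$ is the paper's $m_0$.

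\textbf{The gap is the compactness window $(m_0,2m_0)$.} The space is $H^1_A(\mathbb{R}^3,\mathbb{C})$, and $(-i\nabla+A)^2$ does not preserve real functions. So the profiles $u^j$ in the splitting lemma are \emph{complex} solutions of $-\Delta u+V_\infty u=|u|^{p-2}u$. Nothing forces them to be phases of real solutions, so your Step~1 (real $w^j$ times $e^{i\theta_n^j}$) is not what the splitting gives. Suppose $m_A=m_0$ is not attained and PS fails at some $d\in(m_0,2m_0)$. Then $u^0=0$ and there is a single profile with $E_0(u^1)=d$. That is a critical point of the limit functional, not a solution of $(P)$, so ``unless a nontrivial solution is produced'' does not close the argument. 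You would need $E_0$ to have no critical values in $(m_0,2m_0)$. For real solutions this follows from uniqueness of positive solutions together with $E_0(u)=E_0(u^+)+E_0(u^-)\ge 2m_0$ for nodal ones. For complex (e.g.\ vortex-type) solutions no such splitting argument is available. The missing ingredient is the Bonheure--Nys--Van Schaftingen isolatedness result (Lemma~\ref{isolatedlevel}): critical points with $E_0\le m_0+\xi$ are of the form $e^{i\theta}\omega(\cdot-a)$. This gives PS only in $(m_0,m_0+\hat m)$, with $\hat m=\min\{m_0,\xi\}$ (Corollary~\ref{cpt2}).

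\textbf{Consequences for the estimate and the linking.} The target must be $\sup_{y\in\mathbb{R}^3}E_A(t_y\omega(\cdot-y))<m_0+\hat m$, not $<2m_0$ (Lemma~\ref{beta3}). To pass from your fixed-$t$ bound to the Nehari level you must also control the multiplier: $1\le t_y$ and $t_y^{p-4}< 1+V_\infty^{-1}\bar\Lambda+S^{-3}|K|_2^2\|\omega\|_A^4/\|\omega\|_\bullet^2$. This is what fixes $\Lambda_0$ through $\Lambda_0(V_\infty^{-1}\Lambda_0+1)^{4/(p-4)}=2\hat m/|\omega|_2^2$, and then $\delta(\bar\Lambda)$ follows by continuity. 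For the same reason, drop the two-bump family $t\,w(\cdot-\rho z)+(1-t)w$. At $t=1/2$ its Nehari projection has energy at least $2m_0-o(1)$ as $\rho\to\infty$, which is not below $m_0+\hat m$. The paper links only with $\{\mathcal{T}[y]:|y|\le R\}$, using $\beta(\mathcal{T}[y])=y$ and Brouwer degree.

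\textbf{A minor error.} Your claim $\inf_{\mathcal N}I\ge m_0$ is false in general. The diamagnetic inequality only gives $\|u\|_A^2\ge\int(|\nabla|u||^2+V|u|^2)$, and $\int|\nabla_A w|^2=\int(|\nabla w|^2+|A|^2w^2)$ holds only for real $w$. This is harmless, because your case analysis only needs $m_A\le m_0$ (via translates of $\omega$) and Corollary~\ref{cpt1} below $m_0$.
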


\begin{theorem}\label{cor:fixed-K}
Assume that $(A_1)$, $(V_1)$ and $(K_1)$ hold. Then there exists $s_0>0$ such that for any $K$ satisfying $|K|_2^2\in(0,s_0)$, there exists $\Lambda_K>0$ such that if $A$ and $V$ satisfy
\begin{equation*}
    V_\infty<|A(x)|^2+V(x)<V_\infty+\Lambda_K
\quad \text{for } x\in\mathbb{R}^3,
\end{equation*}
then \eqref{problem} has a bound state solution.
\end{theorem}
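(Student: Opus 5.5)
Since the hypothesis forces $|A|^2+V>V_\infty$ everywhere, minimization alone is not expected to work: the infimum of the energy on the Nehari manifold should equal the limit level $m_\infty$ and not be attained. The plan is therefore to produce a higher-energy critical point by a Cerami–Vaira type linking argument based on the barycenter. The limit problem at infinity, as $A\to0$, $K\to0$, $V\to V_\infty$, is
\[
-\Delta u+V_\infty u=|u|^{p-2}u.
\]
Its ground state $w$ is positive, radial and unique up to translation, with energy $m_\infty$. Working in $\HA$ with the functional
\[
I(u)=\frac12\I\bigl(|\nabla_Au|^2+V|u|^2\bigr)+\frac14\I K\phi_{|u|}|u|^2-\frac1p\I|u|^p,
\]
I will use the Nehari manifold $\mathcal N$. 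Because $4<p<6$, each ray meets $\mathcal N$ exactly once and $\mathcal N$ is a natural constraint.

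The first and main analytic step is the global compactness (splitting) lemma for Palais–Smale sequences of $I$. Any (PS)$_c$ sequence decomposes as a weak limit $u_0$ solving $(P)$ plus finitely many translated solutions of the limit problem. The essential inputs are that $A(x+y_n)\to0$ and $K(x+y_n)\to0$ locally, and the Brezis–Lieb type splitting for the nonlocal term, which follows from $K\in L^2$. From this I will deduce that $I$ satisfies (PS)$_c$ on $\mathcal N$ for $c\in(m_\infty,2m_\infty)$, since sign-changing limit solutions have energy at least $2m_\infty$. I also need $I(u)>m_\infty$ on $\mathcal N$ (using $|u|$, the diamagnetic inequality $|\nabla|u||\le|\nabla_Au|$, and $|A|^2+V>V_\infty$), and that $\inf_{\mathcal N}I=m_\infty$ is not attained.

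The topological step uses the barycenter $\beta(u)=\int\frac{x}{|x|}|u|^2\,dx\big/\int|u|^2\,dx$. I set
\[
b=\inf\{I(u):u\in\mathcal N,\ \beta(u)=0\}
\]
and show $b>m_\infty$ by contradiction, using splitting into a single bump that escapes to infinity and has nonzero barycenter. For competitors I take $\Psi_y=t_y\,w(\cdot-y)$ projected onto $\mathcal N$, for $|y|=R$ large, and show $\max_{|y|=R}I(\Psi_y)<b$ together with a nonzero-degree condition $\beta(\Psi_y)\approx y/|y|$. Then $\Psi_{sy}$ for $s\in[0,1]$ gives a 3-ball of maps whose sup of $I$ is $<2m_\infty$. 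A linking/deformation argument on $\mathcal N$ then yields a critical value in $[b,2m_\infty)$.

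The delicate point, where the smallness hypotheses enter, is the pair of estimates
\[
\sup_{s,y} I(\Psi_{sy})<2m_\infty,\qquad \max_{|y|=R}I(\Psi_y)<b.
\]
For the first, the bound $|A|^2+V<V_\infty+\bar\Lambda$ and $|K|_2<\delta$ perturb $I(t\,w(\cdot-z))$ by $O(\bar\Lambda+\delta^2)$. The magnetic cross term $2\Re\int iA\bar w\cdot\nabla w=0$ vanishes because $w$ is real, so only $|A|^2w^2$ remains. Hence this sup is at most $m_\infty+C(\bar\Lambda+\delta^2)<2m_\infty$ once $\Lambda_0$ is small. For the second, $I(\Psi_y)\to m_\infty$ as $R\to\infty$ by decay of $A$, $K$ and $V-V_\infty$, while $b>m_\infty$ independently of $R$.

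The main obstacle I expect is that $b$ depends on $A,V,K$. I must make $b-m_\infty$ strictly positive for the given potentials, then choose $R$ afterward, and ensure no bound depends on the potentials in an uncontrolled way. For Theorem \ref{cor:fixed-AV} I would fix $\bar\Lambda$ first, then choose $\delta$ so that the constant controlling the nonlocal term stays below the gap. Theorem \ref{cor:fixed-K} follows the same scheme with the order reversed: fix $K$ with $|K|_2^2<s_0$ so that $m_\infty+C|K|_2^2<2m_\infty$ strictly, then choose $\Lambda_K$ to absorb the remaining room.
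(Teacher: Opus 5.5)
Your outline has the same architecture as the paper's proof: splitting lemma, barycenter level $b>m_\infty$, Nehari projections $t_y w(\cdot-y)$, and deformation plus a degree argument (your $m_\infty$ is the paper's $m_0$). However, the compactness window it rests on is not justified.

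You claim (PS)$_c$ for $c\in(m_\infty,2m_\infty)$ because sign-changing limit solutions have energy at least $2m_\infty$. In your splitting lemma, though, the profiles are solutions of $(P_\infty)$ in $H^1(\mathbb{R}^3,\mathbb{C})$. A complex-valued solution has no sign, and the argument via $u^\pm\in\mathcal{N}_0$ plus uniqueness of positive solutions classifies only \emph{real} solutions below $2m_\infty$. Nothing in your proposal excludes a genuinely complex (e.g.\ vortex-type) solution $v$ of $(P_\infty)$ with $E_0(v)\in(m_\infty,2m_\infty)$. If one exists, $v(\cdot-y_n)$ with $|y_n|\to\infty$ is a noncompact (PS) sequence of $E_A$ at that level, and the deformation argument breaks down there.

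The paper closes this gap with the Bonheure--Nys--Van Schaftingen result, Lemma~\ref{isolatedlevel}: there is some non-explicit $\xi>0$ such that every critical point of $E_0$ with energy at most $m_0+\xi$ is $e^{i\theta}\omega(\cdot-a)$. This gives compactness only on $(m_0,m_0+\hat m)$ with $\hat m=\min\{m_0,\xi\}$ (Corollary~\ref{cpt2}). Every smallness requirement must therefore be measured against $\hat m$. The paper needs $E_A(t_y\omega(\cdot-y))<m_0+\hat m$ for all $y$ (Lemma~\ref{beta3}); this is where \eqref{AE} with right-hand side $2\hat m/|\omega|_2^2$ comes from, and $s_0$ is the root of \eqref{s0} built from this $\hat m$. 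Your $s_0$, chosen so that $m_\infty+C|K|_2^2<2m_\infty$, is not shown to suffice. Also, for your $O(\bar\Lambda+\delta^2)$ estimate to be uniform in $y$ you need a uniform bound on the Nehari scaling. The paper proves $t_y\ge 1$ and $t_y^{p-4}\le 1+V_\infty^{-1}\Lambda+S^{-3}|K|_2^2\|\omega\|_A^4/\|\omega\|_\bullet^2$.

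Separately, your claim that $I>m_\infty$ on $\mathcal N$ and that $\inf_{\mathcal N}I$ is not attained does not follow from $|A|^2+V>V_\infty$. The diamagnetic inequality only gives $\|u\|_A^2\ge\int(|\nabla|u||^2+V|u|^2)\,dx$, discarding $|A|^2$, and $V$ alone may lie below $V_\infty$. For a pure gauge $A=\nabla\chi$ with $\chi\in C_c^\infty$, the substitution $u=e^{-i\chi}v$ removes $A$ entirely. What remains is the potential $V=(|A|^2+V)-|\nabla\chi|^2$, which can be made smaller than $V_\infty$ on a large ball far from where $K$ matters, so that $m_A<m_\infty$.

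The paper does not need this claim. By Lemma~\ref{b>m0}, either $m_A$ is attained, and you already have a ground state, hence a bound state. Or compactness below $m_0$ (Corollary~\ref{cpt1}) forces $m_A=m_0$, and only in that case is $b>m_0$ proved and the linking argument run. With this dichotomy, and with $2m_\infty$ replaced by $m_\infty+\hat m$ throughout, your plan becomes the paper's proof.
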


\begin{remark}
The precise value for $s_0$ is  given implicitly in \eqref{s0} below.
\end{remark}

\begin{remark}
The two results above describe the relationship among the potentials, which guarantees the existence of bound state solutions and we point out that in both cases, $A$ and $V$ act on \eqref{problem} in a cooperative way, and they jointly compete with $K$.
\end{remark}

The energy functional $E_A:\HA\rightarrow\mathbb{R}$  corresponding to \eqref{problem} is  defined by
\begin{align*}
	E_A(u):=\dfrac{1}{2}\int_{\mathbb{R}^3}\left(|\nabla_Au|^2+V(x)|u|^2\right)dx+\frac{1}{4}\int_{\mathbb{R}^3}K(x)\phi_{|u|}|u|^2\,dx-\dfrac{1}{p}\int_{\mathbb{R}^3}|u|^pdx.
\end{align*}
As usual, one proves that $E_A\in C^1(\HA,\mathbb{R})$, and that
\begin{align*}
    E_A'(u)v=\Re\left(\int_{\mathbb{R}^3}\left(\nabla_A u\cdot\overline{\nabla_A v}+V(x)u\overline{v}\right)\,dx+\int_{\mathbb{R}^3}K(x)\phi_{|u|}u\overline{v}\,dx-\int_{\mathbb{R}^3}|u|^{p-2}u\overline{v}\,dx\right).
\end{align*}
Moreover, one can easily verify that $E_A$ is unbounded from below in $\HA$, so a minimizing procedure can not be used directly.
Hence, it is natural to restrict $E_A$ to a natural
constraint.

In particular, we define the Nehari manifold
\begin{equation*}
	\mathcal{N}_A:=\{u\in\HA\setminus\{0\}:E_A^\prime(u)u=0\}
\end{equation*}
and the associated least energy
\begin{equation*}
	m_A:=\inf_{u\in\mathcal{N}_A}E_A(u).
\end{equation*}

 Next, we introduce the Sobolev space
\begin{align*}
    H^1(\mathbb{R}^3,\mathbb{C}):=\left\{u\in L^2(\mathbb{R}^3,\mathbb{C}):\nabla u\in L^2(\mathbb{R}^3,\mathbb{C}^3)\right\},
\end{align*}
equipped with 
the norm
\begin{align*}
    \|u\|:=\left(\int_{\mathbb{R}^3}\left(|\nabla u|^2+V(x)|u|^2\right)dx\right)^{\frac{1}{2}}.
\end{align*}
 We also need to consider the following autonomous limit problem 
\begin{equation*}\tag{\textit{$P_\infty$}}
	\left\{
	\begin{aligned}
		&-\Delta u+V_\infty u=|u|^{p-2}u,\quad \mathrm{in}~\mathbb{R}^3,\\
		&u\in H^1(\mathbb{R}^3,\mathbb{C}).
	\end{aligned}
	\right.
	\end{equation*}
    The energy functional $E_0:H^1(\mathbb{R}^3,\mathbb{C})\rightarrow\mathbb{R}$ associated with \eqref{limitproblem} is given by
	\begin{equation*}
			E_0(u):=\dfrac{1}{2}\int_{\mathbb{R}^3}\left(|\nabla u|^2+V_\infty|u|^2\right)dx-\dfrac{1}{p}\int_{\mathbb{R}^3}|u|^pdx.	
	\end{equation*}
Similarly, we define the Nehari manifold
	\begin{equation*}
		\mathcal{N}_0:=\{u\in H^1(\mathbb{R}^3,\mathbb{C})\setminus\{0\}:E_0^\prime(u)u=0\}
	\end{equation*}
	and the corresponding least energy 
	\begin{equation*}
m_0:=\inf_{u\in\mathcal{N}_0}E_0(u).
	\end{equation*}

From \cite{BL1983}, we know that there exists  a real-valued and radially decreasing symmetric ground state solution $\omega$ of the following {\sl real-valued} problem, obviously strictly related to $(P_\infty)$,
\begin{equation*}\tag{\textit{$P_{\infty,\mathbb{R}}$}}
	\left\{
	\begin{aligned}
		&-\Delta u+V_\infty u=|u|^{p-2}u,\quad \mathrm{in}~\mathbb{R}^3,\\
		&u\in H^1(\mathbb{R}^3,\mathbb{R}).
	\end{aligned}
	\right.
\end{equation*}
Actually, $\omega$ is also a ground state solution of \eqref{limitproblem}, see Proposition \ref{thesamelevel}.

The next Lemma is proved in  Bonheure, Nys and Van Schaftingen \cite[Theorem 1]{BNV2019}, and it states that the least energy solutions of \eqref{limitproblem} is unique up to translations and a complex phase. It also indicates that $m_0$ is an isolated critical level of $E_0$.
\begin{lemma}\label{isolatedlevel}
	There exists $\xi>0$ such that if $v$ is a critical point of $E_0$ satisfying $E_0(v)\leq m_0+\xi$, then $v=e^{i\theta}\omega(\cdot-a)$ for some $a\in\mathbb{R}^3$ and $ \theta\in\mathbb{R}$.
\end{lemma}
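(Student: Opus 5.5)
The statement is cited from \cite[Theorem 1]{BNV2019}, so the plan is to reduce it to that result, namely uniqueness of least-energy solutions up to symmetries together with isolatedness of the level $m_0$. I would argue by contradiction. Suppose there is a sequence of critical points $v_n$ of $E_0$ with $E_0(v_n)\le m_0+1/n$, none of which has the form $e^{i\theta}\omega(\cdot-a)$.

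\textbf{Step 1: energies converge to $m_0$.} Each $v_n\neq0$ is a critical point, so $v_n\in\mathcal N_0$. Hence $E_0(v_n)\ge m_0$, and therefore $E_0(v_n)\to m_0$. Moreover
\[
E_0(v_n)=\big(\tfrac12-\tfrac1p\big)\|v_n\|_{H^1_{V_\infty}}^2 ,
\]
so $(v_n)$ is bounded in $H^1(\mathbb R^3,\mathbb C)$ and is a minimizing sequence on $\mathcal N_0$.

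\textbf{Step 2: compactness up to translations.} Concentration–compactness applied to $|v_n|$ gives translations $a_n$ with $v_n(\cdot+a_n)\rightharpoonup v\neq0$. Vanishing is excluded because, on $\mathcal N_0$, $\int|v_n|^p=\|v_n\|^2\ge c>0$, while Lions' lemma would force $v_n\to0$ in $L^p$. The weak limit $v$ is a critical point of $E_0$, and by Fatou applied to $(\tfrac12-\tfrac1p)\|\cdot\|^2$ we get $E_0(v)\le m_0$. Therefore $v$ is a ground state and $v_n(\cdot+a_n)\to v$ strongly in $H^1$.

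\textbf{Step 3: classify complex ground states.} Using the diamagnetic inequality $|\nabla|u||\le|\nabla u|$ and Proposition \ref{thesamelevel}, $|v|$ is a real ground state with $\int|\nabla|v||^2=\int|\nabla v|^2$. This forces $v=e^{i\theta}|v|$ with constant phase. Indeed, where $|v|>0$, write $v=|v|e^{i\varphi}$; then $|\nabla v|^2=|\nabla|v||^2+|v|^2|\nabla\varphi|^2$, and $|v|>0$ everywhere by the strong maximum principle. By uniqueness of positive solutions of $(P_{\infty,\mathbb R})$ (Kwong), $|v|=\omega(\cdot-a)$.

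\textbf{Step 4: local isolation, the main obstacle.} This gives only convergence of $v_n(\cdot+a_n)$ to $e^{i\theta}\omega(\cdot-a)$, not equality. To conclude, I would use nondegeneracy of $\omega$: the kernel of the real linearization $E_0''(e^{i\theta}\omega)$ on $H^1(\mathbb R^3,\mathbb C)$ is spanned by $\partial_j\omega$ and $i\omega$, which are exactly the tangent directions of the symmetry orbit. The real part of the kernel is handled by Kwong's nondegeneracy. The imaginary part is the kernel of $-\Delta+V_\infty-\omega^{p-2}$, which is spanned by $\omega$, since $\omega>0$ is its ground state.

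An implicit function/Lyapunov–Schmidt argument, or a Morse–Bott type argument on the orbit, then shows that critical points in a small neighborhood of the orbit lie on the orbit. This contradicts the choice of $v_n$, since for large $n$ the translated $v_n$ lie in that neighborhood. In fact, Steps 3–4 can be shortened: each $v_n$ is itself a critical point at level at least $m_0$, so once one knows that critical points near the orbit lie on it, the contradiction is immediate. Establishing this local statement is the key technical point, and it is precisely the content of \cite[Theorem 1]{BNV2019}.
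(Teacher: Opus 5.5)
Your proposal is correct and agrees with the paper, which gives no argument of its own and simply invokes \cite[Theorem 1]{BNV2019}. Your outline is in fact already a complete proof in this non-magnetic setting, so the final appeal to \cite{BNV2019} is not needed. It has three parts: strong convergence, up to translations, of critical points with energy tending to $m_0$; identification of all complex ground states as $e^{i\theta}\omega(\cdot-a)$ via equality in the diamagnetic inequality, the strong maximum principle and Kwong's uniqueness; and isolation of the orbit, using $\ker E_0''(\omega)=\mathrm{span}\{\partial_1\omega,\partial_2\omega,\partial_3\omega,i\omega\}$ and the fact that $E_0''(\omega)$ is a compact perturbation of the identity.

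The one caveat is that the lemma must be read for nontrivial critical points, since $v=0$ is critical with $E_0(0)=0\le m_0+\xi$. You tacitly assume $v_n\neq 0$, which is the right interpretation.
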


For future calculations, we define
\begin{equation}\label{mhat}
\hat{m}:=\min\{m_0,\xi\}.
\end{equation}

Let $S$ be the best constant for the Sobolev embedding $\mathcal{D}^{1,2}(\mathbb{R}^3,\mathbb{R})\hookrightarrow L^6(\mathbb{R}^3,\mathbb{R})$
\begin{equation}\label{S}
     S:=\inf_{u\in\mathcal{D}^{1,2}(\mathbb{R}^3,\mathbb{R})\setminus\{0\}}\frac{\Vert u\Vert_{\mathcal{D}^{1,2}}^2}{|u|_6^2}.
 \end{equation}
 Besides, for every $u\in H^1(\mathbb{R}^3, \mathbb{R})$, we define the norm
 \[
\|u\|_\bullet^2:=\int_{\R}(|\nabla u|^2+V_\infty|u|^2)dx.
\]

The following theorem provides a quantitative condition involving the relation
between the admissible range of $|A|^2+V$ and the strength of the Poisson coupling potential $K$.

\begin{theorem}\label{highenergy}
	Let  $(A_1)$, $(V_1)$ and  $(K_1)$ hold. Assume that $A$ and $V$ satisfy
    \begin{equation*}
        V_\infty<|A(x)|^2+V(x)<V_\infty+\Lambda\quad f\!or~x\in\R, 
    \end{equation*}
       and $\Lambda$, $K$ satisfy
    \begin{equation}\label{AE}
        \left(\Lambda+\frac{S^{-3}\left| K\right|_2^2\Vert\omega\Vert_A^4}{|\omega|_2^2}\right)\left(V_\infty ^{-1}\Lambda+\frac{S^{-3}\left| K\right|_2^2\Vert\omega\Vert_A^4}{\Vert\omega\Vert^2_\bullet}+1\right)^\frac{4}{p-4}\leq\frac{2\hat{m}}{|\omega|_2^2},
    \end{equation}
then \eqref{problem} has a bound state solution.
\end{theorem}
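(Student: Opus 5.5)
We follow the Cerami--Vaira scheme: a global compactness result together with a linking/barycenter argument on the Nehari manifold $\mathcal{N}_A$. The structure of the hypotheses already tells us what to aim for. Since $|A|^2+V>V_\infty$ and $K\ge0$, we expect $m_A=m_0$ with $m_A$ not attained, so no ground state is available. The bound state will instead be found at a level in $(m_0,2m_0)$, and more precisely in $(m_0,m_0+\hat m)$. Condition \eqref{AE} is the quantitative estimate that keeps a suitable minimax level below $m_0+\hat m$.

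\textbf{Step 1: global compactness.} We establish a splitting lemma for Palais--Smale sequences of $E_A$. Using $(A_1)$, $(V_1)$, $(K_1)$, the facts $A\to0$, $V\to V_\infty$, $K\to0$ at infinity, and $K\in L^2$, we get that $\int K\phi_{|u_n|}|u_n|^2$ is weakly continuous along translated bubbles. Precisely, $\phi_{|u|}$ is controlled by $\|\phi_{|u|}\|_{\mathcal{D}^{1,2}}\le S^{-1/2}|K|_2|u|_6^2\cdot(\ldots)$; this is where the factor $S^{-3}|K|_2^2$ comes from. Then every (PS)$_c$ sequence satisfies
\[
u_n=u_0+\sum_{j=1}^k w^j(\cdot-y_n^j)+o(1),
\]
where $u_0$ is a critical point of $E_A$, the $w^j$ are nontrivial critical points of $E_0$, $|y_n^j|\to\infty$, and $c=E_A(u_0)+\sum_j E_0(w^j)$. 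The magnetic phase disappears at infinity because $A\to0$. Consequently (PS)$_c$ holds for $c\in(m_0,\min\{2m_0,m_0+\xi\})$, using Lemma \ref{isolatedlevel}: a single bubble has energy $\ge m_0$, any non-ground-state bubble has energy $\ge m_0+\xi$, and $E_A(u_0)\ge m_A\ge m_0$ if $u_0\ne0$. This is exactly why $\hat m=\min\{m_0,\xi\}$ appears.

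\textbf{Step 2: upper estimate on test functions.} For $y\in\R$ let $t_y>0$ be such that $t_y\omega(\cdot-y)\in\mathcal{N}_A$. We bound $E_A(t_y\omega(\cdot-y))$ using $\int(|A|^2+V-V_\infty)|\omega|^2\le\Lambda|\omega|_2^2$. The cross term $2\Re\int iA\,\omega\nabla\omega$ vanishes because $\omega$ is real; this is why $|A|^2+V$ appears rather than $A$ and $V$ separately. The Poisson term is bounded by $S^{-3}|K|_2^2\|\omega\|_A^4$. Solving the Nehari equation for $t_y$ gives $t_y^{p-4}\le V_\infty^{-1}\Lambda+S^{-3}|K|_2^2\|\omega\|_A^4/\|\omega\|_\bullet^2+1$. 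The energy excess over $m_0$ is then at most
\[
\tfrac12\Bigl(\Lambda+\tfrac{S^{-3}|K|_2^2\|\omega\|_A^4}{|\omega|_2^2}\Bigr)|\omega|_2^2\,t_y^{4}\cdots ,
\]
and \eqref{AE} is precisely the statement that this excess is $\le\hat m$, so $\sup_y E_A(t_y\omega(\cdot-y))\le m_0+\hat m$, with strict inequality after a small adjustment. As $|y|\to\infty$, these values tend to $m_0$.

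\textbf{Step 3: topological argument.} We introduce a barycenter map $\beta:\mathcal{N}_A\to\R$ and set $b:=\inf\{E_A(u):u\in\mathcal{N}_A,\ \beta(u)=0\}$. We show $b>m_0$ by contradiction: a minimizing sequence with $E_A\to m_0$ would, by Step 1 and Lemma \ref{isolatedlevel}, split as a single translating bubble $e^{i\theta}\omega(\cdot-y_n)$ with $|y_n|\to\infty$. That is incompatible with $\beta=0$, or else the sequence converges, and then $m_A=m_0$ would be attained, contradicting the strict inequality $|A|^2+V>V_\infty$. Next take a large sphere $\{|y|=R\}$; on it, $\beta(t_y\omega(\cdot-y))\approx y$ and the energy is $<b$. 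Consider the map $[0,1]\times\partial B_R\to\mathcal{N}_A$ that projects $(1-s)\omega(\cdot-y)+s\omega$ onto $\mathcal{N}_A$. By degree theory it must meet $\{\beta=0\}$. This gives the minimax level
\[
c:=\inf_{h}\max E_A\circ h\in[b,\,m_0+\hat m),
\]
where the upper bound follows from Step 2, after controlling the convex combinations as in Cerami--Vaira. A deformation argument on $\mathcal{N}_A$, together with (PS)$_c$ from Step 1, then produces a critical point at level $c$, which is the bound state.

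\textbf{Main obstacle.} Step 1 is the delicate part. The nonlocal term must split correctly along translated sequences even though $\phi_{|u|}$ need not decay nicely, and weak convergence must be handled in the gauge-twisted space $\HA$. A further difficulty is keeping the convex-combination paths of Step 3 strictly below $m_0+\hat m$ using only \eqref{AE}. For the latter we would try to bound the maximum along these paths by $\sup_y E_A(t_y\omega(\cdot-y))$, using interaction estimates for $\omega$ based on its exponential decay.
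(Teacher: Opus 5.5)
Your Steps 1 and 2 essentially reproduce the paper. That includes the splitting lemma, (PS) on $(m_0,m_0+\hat m)$ when $m_A=m_0$, and the bounds $t_y\ge1$ and $t_y^{p-4}\le 1+V_\infty^{-1}\Lambda+S^{-3}|K|_2^2\|\omega\|_A^4/\|\omega\|_\bullet^2$. Together with \eqref{AE} these give $E_A(t_y\omega(\cdot-y))<m_0+\hat m$ for \emph{every} $y\in\mathbb{R}^3$.

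\textbf{The gap is the competitor in Step 3.} To get $\max_{|y|=R}E_A(t_y\omega(\cdot-y))<b$ you need $R$ large, because $b-m_0>0$ may be arbitrarily small. At $s=\tfrac12$ your path then passes through the Nehari projection of $\tfrac12\bigl(\omega(\cdot-y)+\omega\bigr)$, which is two far-apart bumps. This function is real, $|A|^2+V>V_\infty$ and $K\ge0$. Hence its projection has energy at least $2m_0-o(1)$ as $R\to\infty$, plus roughly $\tfrac12\int(|A|^2+V-V_\infty)\omega^2\,dx$. That last term is a positive constant independent of $R$, coming from the bump at the origin. So for large $R$ the maximum along your paths exceeds $2m_0\ge m_0+\hat m$ and leaves the compactness window. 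No interaction estimate can fix this, and the hoped-for bound by $\sup_yE_A(t_y\omega(\cdot-y))$ is false.

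\textbf{The fix is to move the bump instead of interpolating amplitudes.} Use $h_0(y):=t_y\omega(\cdot-y)$ for $y\in\overline{B_R(0)}$; on your cone this is $(s,y)\mapsto h_0((1-s)y)$. Since $\omega$ is radial and $\beta$ is scale-invariant and translation-equivariant, $\beta(h_0(y))=y$ exactly. Therefore every $h:\overline{B_R(0)}\to\mathcal N_A$ with $h=h_0$ on $\partial B_R(0)$ satisfies $\deg(\beta\circ h,B_R(0),0)=1$, so $\max E_A\circ h\ge b$. The bound $c\le\max_{\overline{B_R(0)}}E_A\circ h_0<m_0+\hat m$ then follows from Step 2 and compactness of $\overline{B_R(0)}$ alone. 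This is exactly the paper's argument, phrased by contradiction. If there were no critical value in $(m_0,m_0+\hat m)$, the deformation lemma would give a retraction $\chi$ onto $\mathcal N_A\cap\{E_A\le m_0+\delta\}$ with $m_0+\delta<b$. Then $y\mapsto R\,\beta(\chi(h_0(y)))/|\beta(\chi(h_0(y)))|$ would retract $\overline{B_R(0)}$ onto $\partial B_R(0)$.

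\textbf{A smaller error concerns $m_A$.} You claim $m_A\ge m_0$ in Step 1, and in Step 3 that attainment of $m_A$ would contradict $|A|^2+V>V_\infty$. Neither is justified, because $\|u\|_A^2=\int\bigl(|\nabla u|^2+(|A|^2+V)|u|^2+2A\cdot\mathrm{Im}(\bar u\nabla u)\bigr)dx$ and the cross term has no sign. For example, take $A=\nabla\chi$ with $\chi=\varepsilon x_1(1+|x|^2)^{-1/2}$, so that $A\neq0$ everywhere, and $V=V_\infty-\tfrac12|A|^2$. For small $\varepsilon$ and $|K|_2$ this satisfies all the hypotheses, including \eqref{AE}. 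Yet the substitution $u=e^{-i\chi}v$ turns $\|u\|_A^2$ into $\int(|\nabla v|^2+V|v|^2)\,dx$ with $V<V_\infty$, so $m_A<m_0$. The repair is the paper's case split. First, $m_A\le m_0$ always. If $m_A$ is attained, you already have a bound state (a ground state). Otherwise $m_A=m_0<b$ by Lemma~\ref{b>m0}, and only then are the window $(m_0,m_0+\hat m)$ and the topological argument needed.
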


\begin{remark}\label{bigremark}
The largest possible value for the constant $\Lambda$ appearing in \eqref{AE} depends explicitly on the strength of the Poisson coupling potential $K$. More precisely, setting
$s:=|K|_2^2$, the algebraic equation which defines $\Lambda$ can be
written as
\begin{equation}\label{ae}
    (\Lambda+A_1s)
\left(V_\infty^{-1}\Lambda+A_2s+1\right)^\gamma=A_3,
\end{equation}
where
\begin{equation*}
    A_1=\frac{S^{-3}\|\omega\|_A^4}{|\omega|_2^2},
\qquad
A_2=\frac{S^{-3}\|\omega\|_A^4}{\|\omega\|_\bullet^2},
\qquad A_3=\frac{2\hat m}{|\omega|_2^2},\qquad
\gamma=\frac{4}{p-4}>0.
\end{equation*}
By the implicit function theorem, $\Lambda$ is a $C^1$ function of $s$
and
\begin{equation*}
    \frac{d\Lambda}{ds}=-\frac{A_1\left(V_\infty^{-1}\Lambda+A_2s+1\right)+\gamma A_2(\Lambda+A_1s)}{\left(V_\infty^{-1}\Lambda+A_2s+1\right)+V_\infty^{-1}\gamma(\Lambda+A_1s)}<0.
\end{equation*}
Therefore, $\Lambda$ is strictly decreasing with respect to $|K|_2^2$.
This reflects that as the Poisson coupling potential $K$ becomes stronger ,
 the admissible range for $|A(x)|^2+V(x)$ becomes narrower.
Moreover, if $|K|_2\to0$, then $\Lambda\to\Lambda_0>0$, where
$\Lambda_0$ is uniquely determined by
\begin{equation}\label{Lambda0}
    \Lambda_0\left(V_\infty^{-1}\Lambda_0+1\right)^\gamma=A_3.
\end{equation}
Hence, in the weak Poisson coupling regime ($|K|_2$ small enough), the admissible range for 
$|A(x)|^2+V(x)$ retains a positive width, namely,
\begin{equation*}
 V_\infty<|A(x)|^2+V(x)<V_\infty+\Lambda_0.
\end{equation*}
On the other hand, by \eqref{AE}, $|K|_2$ cannot be too large , but as $|K|_2$ tends to the supremum  $s_0$, which denotes the unique positive root of the equation
\begin{equation}\label{s0}
    A_1s(A_2s+1)^{\gamma}=A_3,
\end{equation} the admissible range for $|A(x)|^2+V(x)$ shrinks to a narrow range near $V_\infty$. In this sense, the strong Poisson coupling
forces the electromagnetic potential $|A|^2+V$ to stay close to its limit $V_\infty$.
This actually reveals a competition between the strength of the
Poisson coupling potential $K$ and the admissible deviation of
$|A|^2+V$ from $V_\infty$.
\end{remark}

  \begin{figure}[ht]
\centering
\begin{tikzpicture}
\begin{axis}[
    width=0.6\textwidth,
    height=0.38\textwidth,
    xlabel={$s$},
    ylabel={$\Lambda$},
    every axis y label/.style={
        at={(axis description cs:-0.08,0.5)},
        anchor=center,
        rotate=0
    },
    xmin=0, xmax=5.4,
    ymin=0, ymax=1.1,
    axis lines=left,
    samples=200,
    domain=0:5,
    xtick=\empty,
    ytick=\empty,
    clip=false
]

\path[name path=xaxis] (axis cs:0,0) -- (axis cs:5,0);

\addplot[thick, blue, name path=curve, domain=0:5]
    {(1-x/5)^0.55};

\addplot[blue!15, draw=none] fill between[
    of=curve and xaxis,
    on layer=axis background
];

\node[left] at (axis cs:0,1) {$\Lambda_0$};
\node[anchor=north] at (axis cs:5,0) {$s_0$};
\node[anchor=north east] at (axis cs:0,0) {$0$};

\addplot[only marks, mark=*] coordinates {(0,1) (5,0)};

\end{axis}
\end{tikzpicture}
\caption{
The curve describes the dependence of $\Lambda$ on $|K|_2^2$ as determined by  \eqref{ae}, and it illustrates the monotonicity of 
$\Lambda$ with respect to the Poisson coupling potential $K$, where $s=|K|_2^2$. 
Besides, from Theorem \ref{cor:fixed-K}, the shadowed region indicates the existence region of a bound state solution for problem \eqref{problem}.}
\label{fig:lambda-K}
\end{figure}

Theorem \ref{highenergy} shows the existence of a bound state solution  when $|A(x)|^2+V(x)$ is confined in the strip region $(V_\infty,V_\infty+\Lambda)$. It is natural to ask whether the bound state solution still exists when $|A(x)|^2+V(x)$ does not lie in this region. In the next result we study this situation, giving a new assumption on the potentials $A, V$ and $K$. This assumption guarantees the existence of a bound state solution, as well, but this bound state is actually a ground state.

\begin{theorem}\label{leastenergy}
	Assume that $(A_1)$, $(V_1)$ and  $(K_1)$ hold.  If $A$, $V$ and $K$ satisfy
	\begin{enumerate}
		\item[$(H_2)$] there exist $c_0>0$, and $0<\sigma<2\sqrt{V_\infty}$ such that $$|A(x)|^2+V(x)\leq V_\infty - c_0e^{-\sigma|x|}\quad f\!or~x\in\R,$$ and $$\int_{\mathbb{R}^3}K(x)e^{2\sqrt{V_\infty}|x|}\,dx<\infty,$$
	\end{enumerate}
	then $\eqref{problem}$ has a ground state solution.
  \end{theorem}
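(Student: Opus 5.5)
The approach is the standard comparison with the limit problem: show $m_A<m_0$ and then use the global compactness result (the splitting lemma the paper develops) to conclude that a minimizing sequence on $\mathcal{N}_A$ converges.

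First, the Nehari manifold is well behaved. Since $4<p<6$, for every $u\neq0$ the fibering map $t\mapsto E_A(tu)$ has a unique maximum point $t_u>0$, with $t_uu\in\mathcal{N}_A$. By the diamagnetic inequality and $V_\infty>0$ (together with $(V_1)$ controlling the norm), $m_A>0$. Ekeland's principle on $\mathcal{N}_A$, which is a natural constraint, then gives a bounded Palais--Smale sequence $(u_n)$ at level $m_A$. Boundedness follows from
\[
E_A(u)-\tfrac14E_A'(u)u=\tfrac14\|u\|_A^2+\left(\tfrac14-\tfrac1p\right)|u|_p^p .
\]

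Next comes the splitting step. Because $A\to0$, $V\to V_\infty$ and $K\to0$ at infinity, the global compactness lemma gives
\[
u_n=u_0+\sum_{j=1}^k e^{i\theta_j^n}w_j(\cdot-y_j^n)+o(1),
\]
where $u_0$ is a critical point of $E_A$, each $w_j$ is a nontrivial critical point of $E_0$, and $|y^n_j|\to\infty$. The energies add: $m_A=E_A(u_0)+\sum_j E_0(w_j)$. Each $E_0(w_j)\ge m_0$, since $m_0$ is also the complex ground level by Proposition~\ref{thesamelevel}. So if $m_A<m_0$, then $k=0$ and $u_n\to u_0$ strongly. Hence $u_0\in\mathcal{N}_A$ with $E_A(u_0)=m_A$, and $u_0$ is a ground state. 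Here $u_0\neq 0$ because $m_A>0$.

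The main obstacle is the strict inequality $m_A<m_0$, which is where $(H_2)$ enters. Take the translates $\omega_R:=\omega(\cdot-Re)$ for a unit vector $e$, and let $t_R$ be such that $t_R\omega_R\in\mathcal{N}_A$. Since $\omega$ is real,
\[
|\nabla_A\omega_R|^2=|\nabla\omega_R|^2+|A|^2\omega_R^2 .
\]
Therefore
\[
E_A(t\omega_R)=E_0(t\omega_R)+\frac{t^2}{2}\int(|A|^2+V-V_\infty)\omega_R^2\,dx+\frac{t^4}{4}\int K\phi_{\omega_R}\omega_R^2\,dx .
\]
The $t_R$ are bounded, and $\max_tE_0(t\omega_R)=m_0$. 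By $(H_2)$, the second term is at most
\[
-c\,c_0\int e^{-\sigma|x|}\omega_R^2\,dx\le -C e^{-\sigma R}.
\]
This uses only that $\omega>0$ is bounded below by a positive constant on the unit ball, so the estimate holds near $x=Re$.

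For the Poisson term, I would use the Sobolev bound
\[
\int K\phi\,\omega_R^2\le S^{-1}\Big(\int K\omega_R^2\,\,\cdot\,\Big)\ldots,
\]
more precisely $\|\phi_{\omega_R}\|_{\mathcal D^{1,2}}\le S^{-1/2}|K\omega_R^2|_{6/5}$, and hence
\[
\int K\phi_{\omega_R}\omega_R^2\le S^{-1}|K\omega_R^2|_{6/5}^2 .
\]
The decay $\omega(x)\le Ce^{-\sqrt{V_\infty}|x|}$ together with $\int Ke^{2\sqrt{V_\infty}|x|}<\infty$ gives
\[
\int K\omega_R^2\le Ce^{-2\sqrt{V_\infty}R}\int K(x)e^{2\sqrt{V_\infty}|x|}\,dx ,
\]
using $|x-Re|\ge R-|x|$. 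I would pass from the $L^{6/5}$ norm to this $L^1$-type quantity by Hölder, using the boundedness of $K\omega_R^2$ on the relevant part. The polynomial correction in the decay of $\omega$ is harmless. The Poisson term is then $O(e^{-2\sqrt{V_\infty}R})$, possibly with a reduced rate after Hölder. Since $\sigma<2\sqrt{V_\infty}$, the negative term $-Ce^{-\sigma R}$ dominates for $R$ large, which gives $m_A\le E_A(t_R\omega_R)<m_0$.

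The delicate point is securing a Poisson-term exponent strictly larger than $\sigma$. If Hölder loses too much, I would estimate $\int K\phi\,\omega_R^2$ directly as a double integral with the kernel $|x-y|^{-1}$. In that form each factor carries $K\omega_R^2$, and the weighted integrability of $K$ yields the rate $e^{-2\sqrt{V_\infty}R}$ per factor.
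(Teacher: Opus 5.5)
Your proposal is correct and follows the paper's overall architecture. The paper also proves $0<m_A\le m_0$ (Lemma \ref{le:0<mAleqm0}). It obtains a $(PS)_{m_A}$ sequence from Ekeland's principle on $\mathcal N_A$, with compactness below $m_0$ coming from the splitting lemma (Lemma \ref{b>m0}, Corollary \ref{cpt1}). It proves $m_A<m_0$ with $t_R\omega(\cdot-Re)$, getting $-Ce^{-\sigma R}$ from $(H_2)$ on $B_1$ exactly as you do (Lemma \ref{strictlysmaller}).

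The one genuine difference is the Poisson term. The paper uses H\"older and Hardy to get $|\phi_{|\omega_{y_n}|}|_\infty\le C|\nabla\omega|_2|K|_2$. It then bounds $\int K\phi\,\omega_R^2\le|\phi|_\infty\int K\omega_R^2$ and evaluates $\int K\omega_R^2$ via Lemma \ref{IntergalLemma}. This gives exactly the rate $e^{-2\sqrt{V_\infty}R}$, which is where $\sigma<2\sqrt{V_\infty}$ is used.

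Your duality bound $\int K\phi_{|\omega_R|}\omega_R^2\le S^{-1}|K\omega_R^2|_{6/5}^2$ keeps the quadratic structure, and the H\"older step you left hedged loses nothing. Write $(K\omega_R^2)^{6/5}=(K\omega_R^2)^{4/5}(K\omega_R^2)^{2/5}$ and apply H\"older with exponents $5/4$ and $5$:
\[
\int(K\omega_R^2)^{6/5}\,dx\le\Big(\int K\omega_R^2\,dx\Big)^{4/5}\Big(|\omega|_\infty^4|K|_2^2\Big)^{1/5}\le Ce^{-\frac{8}{5}\sqrt{V_\infty}R}.
\]
Hence $|K\omega_R^2|_{6/5}^2\le Ce^{-\frac{8}{3}\sqrt{V_\infty}R}$. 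This is strictly faster than the paper's rate, so at this step your route would even tolerate $\sigma<\frac{8}{3}\sqrt{V_\infty}$. Your bound $\int K\omega_R^2\le Ce^{-2\sqrt{V_\infty}R}\int Ke^{2\sqrt{V_\infty}|x|}dx$, via $|x-Re|\ge R-|x|$, is also more elementary than Lemma \ref{IntergalLemma}.

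Two phrases should be dropped. First, $K$ is only in $L^2$, so $K\omega_R^2$ need not be bounded; the interpolation above avoids any boundedness. Second, the double-integral fallback is not an independent argument: $\iint f(x)f(y)|x-y|^{-1}\,dx\,dy$ is not controlled by $\big(\int f\big)^2$. Controlling it requires Hardy--Littlewood--Sobolev, which brings you back to $|f|_{6/5}^2$.
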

  \begin{remark}\label{integra}
The assumption  on $K$ in $(H_2)$ provides a precise exponential decay on the behavior at infinity. We remark that, as it will be clear from the proof of Theorem \ref{leastenergy}, if  $K\equiv0$, then actually there is no restriction on $\sigma$, but we simply need $\sigma>0$, covering the result of  \cite[Theorem 1.1]{LW2025}. On the other hand, if $K\not \equiv 0$, namely we have a true system, then the upper bound $\sigma<2\sqrt{V_\infty}$ is crucial for our proof.
We also notice that the condition on $K$ in $(H_2)$ implies that $K\in L^1(\R,\mathbb{R})$.
\end{remark}

We would like to point out some difficulties encountered in facing problem \eqref{problem}. Due to the unboundedness of the whole space $\mathbb{R}^3$, it is difficult to recover the compactness of $(PS)$ sequences for $E_A$ in this setting. Inspired by Struwe \cite{S1984}, we establish a global compactness result for problem \eqref{problem}, which plays a crucial role in recovering compactness in suitable energy ranges. First we prove that $m_A\leq m_0,$
see Lemma \ref{le:0<mAleqm0}.
Then we use the global compactness result together with Lemma \ref{isolatedlevel}  to find an energy interval $(m_0,m_0+\hat{m})$,
where $\hat{m}$ is given in \eqref{mhat}, in which  every $(PS)$ sequence for $E_A$ recovers compactness. This allows us to obtain a bound state solution in the case $m_A=m_0$. We emphasize that such a solution is not necessarily a ground state, since its energy may lie above the least energy $m_0$.
Subsequently, we turn to the ground state case. By the global compactness result, $(PS)$ sequences recover compactness in the energy interval $(0,m_0)$. As a consequence, if $m_A<m_0$, then the least energy $m_A$ is achieved and problem \eqref{problem} admits a ground state solution.

Since symmetries naturally arise in various real-world problems, studying solutions of differential equations with specific symmetries is a natural topic. Cingolani and Clapp \cite{CC2009} studied the so-called intertwining solutions of the semi-classical  problem
\begin{equation*}
\left(-\epsilon i\nabla+A(x)\right)^2u+V(x)u=|u|^{p-2}u,\quad x\in\R,
\end{equation*}
 where $N\geq 3$ and $2<p<\frac{2N}{N-2}$.
Here, an {\sl intertwining solution} is a solution which satisfies the following symmetry condition:
\begin{equation}\label{intertwining}
	u(gx)=\eta(g)u(x)\quad \mathrm{for~all}~ g\in G~\mathrm{and} ~x\in\R,
\end{equation}
	where $G$ is a closed subgroup of the group $O(N)$ of orthogonal transformations on $\R$ and $\eta:G\rightarrow \mathbb{S}^1$ is a given group homomorphism into the unit complex numbers. In \cite{CC2009} a multiplicity result for intertwining solutions has been proved for $\epsilon>0$ small enough.	Moreover, Cingolani, Clapp and Secchi \cite{CCS2012} considered the following stationary nonlinear Choquard problem
\begin{equation*}
\left(- i\nabla+A(x)\right)^2u+V(x)u=\left(\frac{1}{|x|^\alpha}\ast|u|^p\right)|u|^{p-2}u,\quad x\in\R,
\end{equation*}
where $\alpha\in(0,N)$, $p\in(2-\frac{\alpha}{N},\frac{2N-\alpha}{N-2})$ and $N\geq3$.
They applied the $\mathbb{S}^1$-symmetric mountain pass theorem to obtain infinitely many intertwining solutions involving an infinite group of isometries $G$ in $\R$. Besides, under suitable exponential decay conditions on $A$ and $V$, they obtained an intertwining solution for any $p\in[2,\frac{2N-\alpha}{N-2})$. We also refer to \cite{CS2018} and references therein for further works on  magnetic problems with group action.
	
To the best of our knowledge, the existence  of intertwining solutions of \eqref{problem} has not been investigated under either finite or infinite group actions.  Hence, the second part of this paper  is devoted to to  seek solutions  of \eqref{problem} which satisfy \eqref{intertwining}, where $G$ is a finite or an infinite group.

	Let $G$ be a subgroup of the group $O(3)$, the group  of orthogonal transformations on $\R$,
		We denote the $G$-orbit of $x$ by $Gx:=\{gx:g\in G\}$, its cardinality by $\# Gx$ and 
 the isotropy group of $x$ by $G_x:=\{g_x\in G:g_xx=x\}$.
Let 	$\eta:G\rightarrow \mathbb{S}^1$ be a group homomorphism into the unit complex numbers
  and denote by	${\rm Ker}~\eta:=\{g\in G:\eta(g)=1\}$ the kernel of $\eta$.
 
It is worth noting that solutions satisfying \eqref{intertwining} may be trivial. 
 In fact, for each $x\in\R$, it is possible to choose $g_x\in G$ such that $g_xx=x$, and so 
\begin{equation*}
	u(x)=u(g_xx)=\eta(g_x)u(x);
\end{equation*}
in this way, if $\eta(g_x)\neq 1$, we get $u(x)=0$. Hence, to avoid this case,  we need to impose the following hypothesis: 
\begin{equation*}\tag{$G_\eta$}\label{eq:Geta}
	{\rm there~exists~}x\in\R~{\rm such~that}~G_x\subset{\rm Ker}~\eta.
\end{equation*}
We also need to impose additional natural symmetric assumptions on $A,V$ and $K$:
\begin{enumerate}	\item[$(A_2)$]$A(gx)=gA(x)$ for every $g\in G$.	\item[$(V_2)$]$V(gx)=V(x)$ for every $g\in G$.
     \item [$(K_2)$] $K(gx)=K(x)$ for every $g\in G$.
\end{enumerate}

Likewise, we can use constrained minimization method on symmetric Nehari manifold to obtain  a $(PS)$ sequence, which is also a minimizing sequence. Therefore, the main issue is to show that the symmetric minimizing energy level lies in a range where the compactness of $(PS)$ sequence holds. Inspired by Hirata \cite{Hirata2008}, the precise compactness range is given by the global compactness result and symmetric property.

First, we consider the case in which $G$ is an infinite group,
that is 
\begin{equation*}\tag{$G_1$}
    \#\{gx:g\in G\}=\infty\quad {\rm for~all~}x\in S^2.
\end{equation*}
In this case, the compactness holds at any energy level and we have the following existence result.
\begin{theorem}\label{existence1}
Let $G$ be a closed subgroup of the group $O(3)$ satisfying $(G_\eta)$ and $(G_1)$.	Assume that $(A_1){\rm -}(A_2)$, $(V_1){\rm -}(V_2)$ and  $(K_1){\rm -}(K_2)$ hold.	
		Then \eqref{problem} possesses a ground state intertwining solution.
	\end{theorem}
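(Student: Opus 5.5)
We minimize $E_A$ over the symmetric Nehari manifold. Set
\[
H_A^\eta:=\{u\in\HA:\ u(gx)=\eta(g)u(x)\ \text{for all } g\in G,\ x\in\R\},
\]
which is a closed subspace of $\HA$. The functional $E_A$ is invariant under the isometric action $(g\ast u)(x):=\overline{\eta(g)}\,u(gx)$. Here the magnetic term is preserved because of $(A_2)$, $|\eta(g)|=1$ and the orthogonality of $g$. The potential term is preserved by $(V_2)$. For the Poisson term, $\phi_{|g\ast u|}=\phi_{|u|}\circ g$ by uniqueness of the solution of $-\Delta\phi=K|u|^2$, using $(K_2)$ and the rotation invariance of $\Delta$. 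Hence, by the principle of symmetric criticality, critical points of $E_A|_{H_A^\eta}$ are critical points of $E_A$.

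Assumption $(G_\eta)$ ensures that $H_A^\eta\neq\{0\}$. Take $x_0$ with $G_{x_0}\subset{\rm Ker}\,\eta$, a small ball around $x_0$, and average a bump function $\varphi$ over $G$:
\[
u(x)=\int_G\overline{\eta(g)}\,\varphi(gx)\,dg
\]
with Haar measure. This $u$ is nonzero for a suitable $\varphi$ supported near $x_0$. Define $\mathcal N_A^\eta:=\mathcal N_A\cap H_A^\eta$ and $m_A^\eta:=\inf_{\mathcal N_A^\eta}E_A$. The usual fibering argument on $t\mapsto E_A(tu)$ (with $4<p<6$) and Sobolev inequalities give $\mathcal N_A^\eta\neq\emptyset$ and $m_A^\eta>0$. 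Ekeland's principle on the natural constraint then yields a $(PS)_{m_A^\eta}$ sequence $(u_n)\subset H_A^\eta$, bounded in $\HA$.

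The main step is compactness. I would apply the global compactness result (the Struwe-type splitting developed in the paper). Up to a subsequence,
\[
u_n=u+\sum_{j=1}^k e^{i\theta_j}w_j(\cdot-y_n^j)+o(1),\qquad E_A(u_n)\to E_A(u)+\sum_{j=1}^k E_0(w_j),
\]
where $u$ is a critical point of $E_A$, the $w_j$ are nontrivial critical points of $E_0$ with $E_0(w_j)\ge m_0$, and $|y_n^j|\to\infty$. The magnetic and Poisson terms disappear at infinity by $(A_1)$ and $(K_1)$. Suppose $k\ge1$ and take a profile with $|y_n|\to\infty$. Because $u_n$ is $G$-equivariant, $|u_n|$ is $G$-invariant, so every point $gy_n$ carries the same concentrating bump. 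By $(G_1)$ and compactness of $S^2$, for every $N$ the set $\{g\,y_n/|y_n|\}$ contains at least $N$ points with mutual distances bounded below uniformly in $n$ for $n$ large. Here I would use that the orbit of any limit point of $y_n/|y_n|$ is infinite and that orbits vary lower-semicontinuously in size. Therefore the points $gy_n$ split into at least $N$ mutually diverging profiles, and the energy gives $m_A^\eta\ge N m_0$ for every $N$, which is impossible. Hence $k=0$ and $u_n\to u$ strongly in $H_A^\eta$. I expect this step, making the uniform separation of $N$ orbit points rigorous, to be the main obstacle.

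Finally, strong convergence gives $u\in\mathcal N_A^\eta$ with $E_A(u)=m_A^\eta$. Since $\mathcal N_A^\eta$ is a natural constraint, $u$ is a critical point of $E_A|_{H_A^\eta}$, hence of $E_A$, and it satisfies \eqref{intertwining}. It has the least energy among nontrivial intertwining solutions, so it is a ground state intertwining solution.
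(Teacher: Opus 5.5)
Your proposal is correct and follows essentially the paper's route: minimization on the symmetric Nehari manifold via Ekeland, then the Struwe-type splitting of Lemma \ref{splitting}, where the $G$-invariance of $|u_n-u^0|$ forces a profile center near every $gy_n^j$, and finitely many centers are incompatible with the infinite orbits in $(G_1)$. The paper phrases this last step as $G$-invariance of the finite set of limit directions $\{r_\infty^j\}$, which gives $\ell(G)\le k$; your appeal to lower semicontinuity of orbit size is unnecessary, since $(G_1)$ already makes the orbit of $r_\infty=\lim y_n/|y_n|$ infinite, so any $k+1$ distinct points $g_i r_\infty$ give $k+1$ mutually diverging sequences $g_i y_n$, each of which needs its own profile center.
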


\begin{example}
 {\rm Let $G=\mathbb{S}^1\times O(1)\subset O(3)$ be a continuous subgroup  on $\R$ and in Theorem \ref{existence1}
 set \begin{equation*}
\eta(e^{i\theta},h)=e^{i\theta}\quad {\rm for~}(e^{i\theta},h)\in \mathbb{S}^1\times O(1).
       \end{equation*}
 Then the solution $u$  that we find satisfies \eqref{intertwining}, that is,
      \begin{equation*}
		u(e^{i\theta}y,hz)=e^{i\theta}u(y,z) \quad{\rm for~all~}(y,z)\in\mathbb{R}^2\times \mathbb{R}.
	\end{equation*}}
    \end{example}
     From a physical perspective, this result ensures the existence of a standing wave owing a nontrivial phase factor under continuous infinite group actions. This solution icorresponds to the ground state of nonlinear quantum systems with magnetic interactions, and it can also be interpreted as a vortex-type solution carrying angular momentum.
	
Concerning the multiplicity of solutions for \eqref{problem}, we simply state the result, referring to Section \ref{secint} for the precise setting. For this, we recall the following
\begin{definition}\label{dist}
Let   $u$ and $v$ be solutions of \eqref{problem}. They are said to be are {\sl geometrically distinct} if $e^{i\theta}u\neq v$ for all $\theta\in\mathbb{R}/2\pi\mathbb{Z}$.
\end{definition}

The multiplicity of intertwining solutions is stated as follows. 

	\begin{theorem}\label{multiplicity}
	Let $G$ be a closed subgroup of the group $O(3)$ satisfying $(G_\eta)$ and $(G_1)$.
		Assume that $(A_1){\rm -}(A_2)$, $(V_1){\rm -}(V_2)$ and  $(K_1){\rm -}(K_2)$ hold.
		Then \eqref{problem} possesses a sequence of geometrically distinct intertwining solutions $\{u_n\}$	satisfying $E_A(u_n)\rightarrow\infty$ as $n\rightarrow\infty$.
	\end{theorem}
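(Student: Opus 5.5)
Restrict $E_A$ to the closed subspace
\[
H_A^\eta:=\{u\in\HA:\ u(gx)=\eta(g)u(x)\ \text{for all } g\in G,\ x\in\R\},
\]
which is invariant under the $\mathbb{S}^1$-action $u\mapsto e^{i\theta}u$, and apply the $\mathbb{S}^1$-equivariant symmetric mountain pass theorem (Fountain theorem) for the even, $\mathbb{S}^1$-invariant functional $E_A|_{H_A^\eta}$.

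The first step is the principle of symmetric criticality. By $(A_2)$, $(V_2)$ and $(K_2)$, the action $(g\ast u)(x):=\overline{\eta(g)}\,u(gx)$ is isometric on $\HA$ and leaves $E_A$ invariant. For the nonlocal term, note that $|g\ast u|=|u|\circ g$, and uniqueness of the Poisson solution together with $(K_2)$ gives $\phi_{|u\circ g|}=\phi_{|u|}\circ g$. Palais' principle then shows that critical points of $E_A|_{H_A^\eta}$ are intertwining solutions of \eqref{problem}. Condition $(G_\eta)$ ensures that $H_A^\eta$ is infinite dimensional. To see this, take a point $x_0$ with $G_{x_0}\subset{\rm Ker}\,\eta$ and average bump functions supported near distinct points of the orbit to produce nonzero elements. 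Near $Gx_0$ the orbit is locally a manifold, so there are infinitely many linearly independent such elements.

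The second step, which is the key compactness step, is to show that $E_A|_{H_A^\eta}$ satisfies $(PS)_c$ for every $c$. Take a $(PS)_c$ sequence in $H_A^\eta$. It is bounded by the standard argument, since $E_A-\frac14E_A'(u)u$ controls the norm when $p>4$. By Palais criticality it is also a $(PS)$ sequence for $E_A$ on all of $\HA$, so the global compactness result decomposes it as $u_n=u_0+\sum_{j=1}^k e^{i\theta_j}w_j(\cdot-y_n^j)+o(1)$ with $|y_n^j|\to\infty$, where each $w_j$ is a nontrivial solution of $(P_\infty)$ and $E_A(u_n)\to E_A(u_0)+\sum_j E_0(w_j)$. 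Suppose $k\ge1$. Write $y_n:=y_n^1$, $\hat y_n:=y_n/|y_n|$, and pass to a subsequence along which $\hat y_n\to x^*\in S^2$. By $(G_1)$ the orbit $Gx^*$ is infinite, so for any $M$ we can pick $g_1,\dots,g_M\in G$ with the points $g_lx^*$ pairwise distinct. Then $|g_ly_n-g_my_n|\to\infty$ for $l\neq m$. The intertwining property gives $|u_n(g_l\,\cdot)|=|u_n|$, so each $g_ly_n$ is also a concentration point of $|u_n|$ carrying mass $\ge|w_1|_2^2>0$. This yields $\liminf|u_n|_2^2\ge M|w_1|_2^2$ for every $M$, which contradicts boundedness. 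Hence $k=0$ and $u_n\to u_0$ strongly. I expect this step to be the main obstacle, because the concentration points must be made rigorous: one has to use the profile decomposition to show that $u_n(\cdot+g_ly_n)\rightharpoonup$ a nonzero weak limit for each $l$, while keeping the magnetic phase and the vanishing $A$ under control at infinity. This can be handled using $(A_1)$ and $(K_1)$, since $A\to0$ and $K\to0$ at infinity.

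The third step verifies the geometric conditions of the Fountain theorem. Choose an $\mathbb{S}^1$-invariant decomposition of $H_A^\eta$ into finite-dimensional complex subspaces $Y_k$ and their complements $Z_k$. On $Y_k$ all norms are equivalent, and the quartic Poisson term is dominated by the $|u|^p$ term because $p>4$, so $\max_{Y_k\cap\{\|u\|=\rho_k\}}E_A\le0$ for large $\rho_k$. On $Z_k$, define
\[
\beta_k:=\sup_{u\in Z_k,\ \|u\|=1}|u|_p .
\]
Then $\beta_k\to0$, by the usual argument through weak convergence together with the local compact embedding and interpolation with $L^2$. This gives $E_A\ge\frac12 r^2-\frac1p\beta_k^pr^p\to\infty$ on spheres of suitable radius $r_k$, using $\phi\ge0$ and the norm equivalence in which $V\ge0$ and $V\to V_\infty$ are used. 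The Fountain theorem then produces critical values $c_k\to\infty$. Since $E_A$ is $\mathbb{S}^1$-invariant, each critical point generates a whole orbit $\{e^{i\theta}u\}$, so critical points at distinct levels are geometrically distinct. This yields the sequence $\{u_n\}$ with $E_A(u_n)\to\infty$.
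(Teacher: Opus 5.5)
Your proposal is correct in outline, but it follows a different route from the paper.

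\textbf{Comparison with the paper.} The paper applies the $\mathbb{S}^1$-equivariant symmetric mountain pass theorem, Proposition~\ref{prop:smp}. It checks (MP$_1$) with $\hat X$ equal to the whole space and a small sphere, (MP$_2$) on finite-dimensional subspaces, and (MP$_3$) from ${\rm Fix}(\mathbb{S}^1)=\{0\}$. The condition $(PS)_d$ for all $d>0$ comes from Lemma~\ref{compactness}(a). Its proof shows that the set of limit directions $\{r^j_\infty\}$ of all the profiles in Lemma~\ref{splitting} is $G$-invariant, so it would contain an infinite orbit although $k<\infty$.

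You use the Fountain theorem instead, and your compactness step is a more direct mass count. One concentration direction $x^*$ and $M$ distinct points of $Gx^*$ give $M$ pairwise disjoint balls $B_R(g_ly_n)$. Each carries $L^2$-mass bounded below by a fixed positive constant, which contradicts boundedness once $M$ is large. This is valid and more elementary, since it needs only one non-vanishing concentration sequence rather than the $G$-invariance of the whole profile configuration. You also correctly note that a $(PS)$ sequence of the restricted functional is a $(PS)$ sequence of $E_A$ on $\HA$, by equivariance of the gradient; the paper leaves this implicit. The obstacle you anticipate does not arise. Since $|u_n(gx)|=|u_n(x)|$, the mass in $B_R(gy_n)$ equals the mass in $B_R(y_n)$, and neither the magnetic phase nor the behaviour of $A$ enters.

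\textbf{The step that needs repair: $\beta_k\to0$.} This is the price of the Fountain theorem, and your justification of it is not adequate as written. Since $\beta_k$ is the norm of the embedding $\HA^\eta\hookrightarrow L^p$ restricted to $Z_k$, the condition $\beta_k\to0$ is equivalent to compactness of that embedding. On $\R$, weak convergence, local compactness and interpolation with $L^2$ cannot give this by themselves: these ingredients are also available in $H^1(\R,\mathbb{C})$, where translated bumps destroy compactness. What makes it true is the symmetry, and there are two ways to use it.
\begin{enumerate}
\item The mass count of your Step~2 applies to any bounded sequence $u_k\rightharpoonup0$ in $\HA^\eta$. Non-vanishing would give points $y_k$ with $|y_k|\to\infty$ and $\int_{B_1(y_k)}|u_k|^2dx\ge\delta$, hence arbitrarily many disjoint balls of mass at least $\delta$. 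So such a sequence vanishes in the sense of Lions and tends to $0$ in $L^p$ for $2<p<6$.
\item For a closed $G\subset O(3)$, $(G_1)$ forces the identity component of $G$ to be $SO(3)$. If it were a circle group of rotations about an axis, $G$ would normalize it and hence preserve that axis, so the unit vectors on the axis would have an orbit of at most two points. If it were trivial, $G$ would be finite. Since $SO(3)$ is perfect, $\eta\equiv1$ on it, so $\HA^\eta$ consists of radial functions. Strauss' radial lemma applied to $|u|$ then gives the compact embedding directly.
\end{enumerate}

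With this repair your argument is complete, and it additionally gives explicit lower bounds $c_k\ge b_k$ for the critical values. The paper's route never needs $\beta_k\to0$. There (MP$_1$) only requires positivity on one small sphere, and the unboundedness of the critical levels comes from the equivariant index together with $(PS)_d$ at every level.
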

	The above theorem shows that, under infinite group actions, the system admits infinitely many geometrically distinct intertwining solutions with increasing energy levels. Physically,  these solutions represent higher energy bound states carrying nontrivial phase patterns, which are induced by the continuous symmetry and the nonlinear interaction.

We say	 the group $G$ acts effectively on the unit sphere $S^{2}:=\{x\in\R:|x|=1\}$ if
	there exists $g\in G$ such that $gx\neq x$ for $x\in S^{2}$. 
	Next, we consider a finite group $G$ which acts effectively on $S^2$, that is 
    \begin{equation*}\tag{$G_2$}\label{eq:finitegroup}
    \#\{gx:g\in G\}\in [2,\infty)\quad {\rm for~all~}x\in S^2.
\end{equation*}
Define the minimal cardinality of $G$-orbit by
	\begin{equation*}
		\ell(G):=\min_{x\in S^{2}}\left\{\#Gx:g\in G\right\}\in[2,\infty),
	\end{equation*}
so that we can  choose $x_0\in S^{2}$ such that
\begin{equation}\label{x0}
  \ell(G)=\#Gx_0.  
\end{equation}
We also need to introduce $e_j\in S^2$, $j=1,\ldots, \ell(G)$, such that
	\begin{equation}\label{e}
		Gx_0=\left\{e_1,e_2,\cdots,e_{\ell(G)}\right\},
	\end{equation}
so that we can explicitly define
	\begin{equation}\label{dG}
		d_G:=\min_{i\neq j}|e_i-e_j|\in(0,2].
	\end{equation}

For a finite group case, the range of compactness  is weakened, and we show that the compactness holds when the energy level is strictly smaller than $\ell(G)m_0$. In this situation the difficulty is to build a good test--function such that the symmetric least energy lies in the range of compactness. The real-valued function $\omega$, a ground state solution of \eqref{limitproblemR}, plays a significant role in doing this. To be specific, a good test--function is made up by a sum of proper translations of $\omega$. whose components are far away from each
others. Moreover, due to the presence of symmetry, we do not need to use \cite[Theorem 1]{BNV2019} to guarantee that $m_0$ is an isolated critical level of $E_0$.

The existence result of ground state solution is as follows.
	\begin{theorem}\label{existence2}
		Let $G$ be a  finite subgroup of the group $O(3)$ satisfying $(G_\eta)$ and $(G_2)$.
		Assume that $(A_1){\rm -}(A_2)$ $(V_1){\rm -}(V_2)$ and  $(K_1){\rm -}(K_2)$ hold. If $A$, $V$ and $K$ satisfy
        \begin{enumerate}
		\item[$(H_3)$] there exist $c_1>0$ and $\alpha\in(0,d_G\sqrt{V_\infty})$  such that 
	$$|A(x)|^2+V(x)\leq V_\infty - c_1e^{-\alpha|x|}\quad f\!or~x\in\R,$$ and $$\int_{\mathbb{R}^3}K(x)e^{2\sqrt{V_\infty}|x|}\,dx<\infty,$$
    \end{enumerate}
		then \eqref{problem} possesses a ground state intertwining solution.
	\end{theorem}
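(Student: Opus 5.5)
We will minimize $E_A$ on the symmetric Nehari manifold $\mathcal N_A^G:=\mathcal N_A\cap H^1_{A,G}$. Here $H^1_{A,G}$ denotes the closed subspace of $u\in\HA$ satisfying \eqref{intertwining}. By $(A_2)$, $(V_2)$, $(K_2)$ and the uniqueness of $\phi_{|u|}$ (so that $\phi_{|u\circ g|}=\phi_{|u|}\circ g$), the functional $E_A$ is invariant under the isometric action $(g\ast u)(x)=\overline{\eta(g)}u(gx)$. The principle of symmetric criticality then shows that constrained critical points on $\mathcal N_A^G$ are free critical points of $E_A$. Set $m_A^G:=\inf_{\mathcal N_A^G}E_A$. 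By Ekeland's principle applied on the Nehari manifold, which is a natural constraint since $4<p<6$, we obtain a symmetric $(PS)_{m_A^G}$ sequence, and it is bounded. Using the global compactness result (Struwe-type splitting), any bounded symmetric $(PS)_c$ sequence without a convergent subsequence decomposes as a weak limit $u_0$ plus profiles $e^{i\theta_j}w_j(\cdot-y_n^j)$ with $|y_n^j|\to\infty$ and $w_j$ nontrivial critical points of $E_0$. The symmetry forces these profiles to appear in whole $G$-orbits. Since $G$ is finite and $(G_2)$ holds, after rescaling $y_n/|y_n|$ accumulates at some point of $S^2$ whose orbit has at least $\ell(G)$ elements, which are separated asymptotically. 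Hence $c\ge E_A(u_0)+\ell(G)m_0\ge \ell(G)m_0$. So compactness holds for $c<\ell(G)m_0$, and it remains to show $0<m_A^G<\ell(G)m_0$.

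For the upper bound, take $x_0$ from \eqref{x0} with $G_{x_0}\subset{\rm Ker}\,\eta$; after adjusting via $(G_\eta)$, we may take $x_0$ to be the point with isotropy in the kernel, choosing the orbit of minimal cardinality compatible with it. For $R$ large define
\[
u_R(x):=\sum_{j=1}^{\ell(G)}\eta(g_j)\,\omega(g_j^{-1}x-Rx_0),\qquad e_j=g_jx_0 .
\]
This function is well defined and intertwining precisely because the isotropy lies in ${\rm Ker}\,\eta$. Let $t_R>0$ be such that $t_Ru_R\in\mathcal N_A^G$. We expand $E_A(t_Ru_R)$. The quadratic term equals $\ell(G)$ copies of $\|\omega\|_\bullet^2$, plus $\int(|A|^2+V-V_\infty)\omega_R^2$, plus magnetic cross terms. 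The cross terms $2\,\Re\int -i\nabla\omega\cdot A\,\omega$ vanish because $\omega$ is real. Thus the first correction is bounded above by $-c_1\int e^{-\alpha|x|}\omega^2(x-Re_j)\,dx\approx -Ce^{-\alpha R}$, using the decay $\omega\sim e^{-\sqrt{V_\infty}|x|}|x|^{-1}$. The interaction terms between distinct bumps are of size $O(e^{-\sqrt{V_\infty}d_G R})$ up to polynomial factors. Since we use $p>2$ with an overlap estimate in the nonlinear term, these interactions are either negative or of this order. The Poisson term satisfies
\[
\int K\phi_{|u_R|}|u_R|^2\le S^{-3/2}\ldots\lesssim \Big(\int K\omega_R^2\Big)\cdot C .
\]
More precisely, by Hardy–Littlewood–Sobolev and $\phi\le C$, it is bounded by $C\int K(x)e^{-2\sqrt{V_\infty}|x-Re_j|}\,dx\le Ce^{-2\sqrt{V_\infty}R}\int Ke^{2\sqrt{V_\infty}|x|}$. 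This is $O(e^{-2\sqrt{V_\infty}R})$, which is negligible compared with $e^{-\alpha R}$ because $\alpha<d_G\sqrt{V_\infty}\le 2\sqrt{V_\infty}$. Since $t_R\to1$, we conclude that $E_A(t_Ru_R)\le \ell(G)m_0-c e^{-\alpha R}+o(e^{-\alpha R})<\ell(G)m_0$ for $R$ large.

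The main obstacle is this asymptotic comparison. In particular, the bump-interaction terms must be dominated by the gain $e^{-\alpha R}$. The hypothesis $\alpha<d_G\sqrt{V_\infty}$ is exactly what is needed, since the pairwise distances are at least $d_GR$. I would use the standard estimate $\int \omega(x-a)\omega(x-b)^{q}\,dx\sim e^{-\sqrt{V_\infty}|a-b|}$ for $q>1$ (Bahri–Li type). Combining this with the compactness below $\ell(G)m_0$ and $m_A^G>0$, a minimizer exists and is a nontrivial intertwining critical point. It is a ground state among intertwining solutions, since every such solution lies in $\mathcal N_A^G$.
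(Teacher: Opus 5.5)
Your proposal is correct in outline and has the same skeleton as the paper: minimization on the symmetric Nehari manifold, Ekeland, compactness of symmetric $(PS)_d$ sequences below $\ell(G)m_0$ obtained from Lemma~\ref{splitting} as in Lemma~\ref{compactness}(b), and an $\ell(G)$-bump test function on the minimal orbit with phases $\eta(g_j)$. The genuine difference is how you estimate the energy of the test function.

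The paper truncates $\omega$ at radius $\lambda_R<d_GR/2$ (see \eqref{alfa}, \eqref{lambdaR}), so the bumps have disjoint supports. The quadratic and $L^p$ parts then split exactly, and each bump is handled by the one-bump bound of Lemma~\ref{symmetricmA<m0}. The only cost is the cut-off error $O(e^{-2\sqrt{V_\infty}\lambda_R(1-\epsilon)})$, which $e^{-\alpha R}$ dominates thanks to the choice of $\epsilon$ and $\lambda_R$.

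You keep the full bumps, so you must control the inter-bump interactions, writing $\omega_i:=\omega(\cdot-Re_i)$. Because of the complex phases these interactions have no sign. This includes the off-diagonal magnetic terms of the form $\Im\big(\eta(g_i)\overline{\eta(g_j)}\big)\int(\omega_iA\cdot\nabla\omega_j-\omega_jA\cdot\nabla\omega_i)$, which do not vanish even though $\omega$ is real. Bahri--Li overlap bounds make all of them $O(e^{-\sqrt{V_\infty}d_GR})$ up to polynomial factors, and the hypothesis $\alpha<d_G\sqrt{V_\infty}$ enters exactly as in the paper. Your route dispenses with $\epsilon$, $\lambda_R$ and the cut-off but needs interaction asymptotics; the paper's needs only the pointwise decay \eqref{decayestimate}.

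On the Poisson term your version is actually more careful. You bound $\int K\phi_{|u_R|}|u_R|^2$ globally, using $e^{-2\sqrt{V_\infty}|x-Re_j|}\le e^{-2\sqrt{V_\infty}R}e^{2\sqrt{V_\infty}|x|}$. This captures the nonnegative cross terms $\int K\phi_{|\omega_i|}|\omega_j|^2$. The paper's asserted additivity $E_A(t\Upsilon_R)=\sum_jE_A(t\omega^{\lambda_R}(\cdot-Re_j))$ overlooks these terms; they are $O(e^{-2\sqrt{V_\infty}R})$ and hence harmless.

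One caveat applies to both arguments. The test function is intertwining only if $G_{x_0}\subset{\rm Ker}\,\eta$ at the point $x_0$ realizing $\ell(G)$, and $(G_\eta)$ does not guarantee this. Your remedy of passing to the smallest orbit compatible with $\eta$ only gives $m_A^\eta<\#Gx_0\,m_0$. That bound may exceed the compactness threshold $\ell(G)m_0$, so it does not close the argument. One must assume, as the paper tacitly does, that the minimal orbit can be chosen with isotropy contained in ${\rm Ker}\,\eta$, with $d_G$ computed on that orbit.
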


\begin{example}
Let $G=\{(e^{i\frac{2\pi k}{n}},h): k=0,1,\cdots ,n-1\}$ be a discrete subgroup of $\mathbb{S}^1\times O(1)$ on $\R$.  Set \begin{equation*}
\eta(e^{i\frac{2\pi k}{n}},h)=e^{i\frac{2\pi k}{n}}\quad {\rm for~}(e^{i\theta},h)\in G
       \end{equation*}
       in Theorem  \ref{existence2}. Then we  obtain a solution $u$ which satisfies \eqref{intertwining}, that is,
      \begin{equation*}
		u(e^{i\frac{2\pi k}{n}}y,hz)=e^{i\frac{2\pi k}{n}}u(y,z) \quad{\rm for~all~}k=0,1,\cdots,n-1~{\rm and~}(y,z)\in\mathbb{R}^2\times \mathbb{R}.
	\end{equation*}
\end{example}

We remark that, in contrast with Theorem \ref{existence1}, Theorem \ref{existence2} addresses the case  of finite groups action, which permits the existence of ground state intertwining solutions, as well, with suitable assumptions.

The rest of this paper is organized as follows: In Section 2, we present the variational setting and some preliminaries. The global compactness result is established in Section 3. Section 4 is devoted to the existence of a bound state solution, and Section 5 deals with the existence of a ground state solution. Finally, we show the existence and multiplicity of intertwining solutions in Section 6.

In this paper, we use the following notations:
\begin{itemize}
	\item  $|\cdot|_p$ denotes the norm in  $L^p(\mathbb{R}^3,\mathbb{C})$, $1\leq p\leq\infty$ and $|\cdot|_{L^p(\Omega)}$ is the $L^p$ norm on a measurable set $\Omega$;
	\item $\to$ and $\rightharpoonup$ stand for the strong and weak convergence in the related space;
	\item $C, C_1, C_2, \ldots$ denote positive universal constants which may be different from line to line;
	\item $B_R(y)$ denotes the ball in $\mathbb{R}^3$ with radius $R$ centered at $y$;
    \item $(X^*,\left\|\cdot\right\|_{X^*})$ stands for the dual space of the Banach space  $(X,\left\|\cdot\right\|_X)$.
\end{itemize}

\section{Variational setting and preliminaries}\label{sec2}
In this section, we present the variational framework and some preliminary lemmas.

We start recalling that
	\begin{equation*}
	\HA:=\left\{u\in L^2(\mathbb{R}^3,\mathbb{C}):\nabla_Au\in L^2(\mathbb{R}^3,\mathbb{C}^3)\right\}
	\end{equation*}
is a Hilbert space if equipped with the inner product
	\begin{equation*}
		\langle u,v\rangle_A:=\Re\left(\int_{\mathbb{R}^3}(\nabla_Au\cdot\overline{\nabla_Av}+V(x)u\overline{v})dx\right),
	\end{equation*}
which induces the associated norm
	\begin{equation*}
		\|u\|_A:=\left(\int_{\mathbb{R}^3}\left(|\nabla_Au|^2+V(x)|u|^2\right)dx\right)^{\frac{1}{2}}.
	\end{equation*}
	From \cite[Sections 7.19-7.22]{LL2001},  thanks to $(A_1)$, it follows that $C_c^\infty(\mathbb{R}^3,\mathbb{C})$ is dense in $H_A^1(\mathbb{R}^3,\mathbb{C)}$ and that for each $u\in H_A^1(\mathbb{R}^3,\mathbb{C)}$,  the following diamagnetic inequality hold:
	\begin{equation}\label{diamagnetic}
		|\nabla|u||\leq|\nabla_Au|\quad\mathrm{a.e.~in}~\mathbb{R}^3.
	\end{equation}
	Therefore, we deduce that if $u\in H_A^1(\mathbb{R}^3,\mathbb{C)}$, then $|u|\in H^1(\mathbb{R}^3,\mathbb{R)}$. Combining this  with the Sobolev embedding theorem, we conclude that the embedding
	$H_A^1(\mathbb{R}^3,\mathbb{C)}\hookrightarrow L^t(\mathbb{R}^3,\mathbb{C})$ is continuous for every $t\in[2,6]$, while the embedding $H_A^1(\mathbb{R}^3,\mathbb{C)}\hookrightarrow L^t_{\rm loc}(\mathbb{R}^3,\mathbb{C})$ is compact for every $t\in [1,6)$ by the Rellich-Kondrachov theorem.

We also recall that $\mathcal{D}^{1,2}(\mathbb{R}^3,\mathbb{R})$ is Hilbert space with norm
\[
\|v\|_{\mathcal{D}^{1,2}}:=\left(\int_{\R}|\nabla v|^2dx\right)^{\frac{1}{2}}
\]
and that it is continuously embedded in $L^6(\R,\mathbb{R})$, since by \eqref{S}
\begin{equation}\label{D6}
|\phi|_6\leq S^{-\frac{1}{2}}|\nabla \phi|_2 \quad \mbox{ for every }\phi\in \mathcal{D}^{1,2}(\mathbb{R}^3,\mathbb{R}).
\end{equation}

Since for any $u\in H_A^1(\mathbb{R}^3,\mathbb{C)}$, we have $|u|\in H^1(\mathbb{R}^3,\mathbb{R})$, we can consider the linear functional $\mathcal{L}_{|u|}:\mathcal{D}^{1,2}(\mathbb{R}^3,\mathbb{R})\to\mathbb{R}$ defined by
    \begin{align*}
        \mathcal{L}_{|u|}(v):=\int_{\mathbb{R}^3}K(x)|u|^2v\,dx,\quad\forall v\in \mathcal{D}^{1,2}(\mathbb{R}^3,\mathbb{R}).
    \end{align*}
From the H\"older inequality, the diamagnetic inequality and \eqref{D6}, one has
    \begin{equation}\label{Lu}
        \left| \mathcal{L}_{|u|}(v)\right|\leq\left|K\right|_2| u|_6^2| v|_6\leq S^{-\frac{3}{2}}|K|_2\Vert |u|\Vert^2_{\mathcal{D}^{1,2}}\Vert v\Vert_{\mathcal{D}^{1,2}}\leq S^{-\frac{3}{2}}\left|K\right|_2\left\|u\right\|_A^2\|v\|_{\mathcal{D}^{1,2}}.
    \end{equation}
 Hence $ \mathcal{L}_{|u|}$ is a continuous linear operator on $\mathcal{D}^{1,2}(\mathbb{R}^3,\mathbb{R})$; thus, by the Lax-Milgram theorem, there exists a unique $\phi=\phi_{|u|}\in\mathcal{D}^{1,2}(\mathbb{R}^3,\mathbb{R})$ which weakly solves
    \begin{align*}
        -\Delta\phi=K(x)|u|^2,
    \end{align*}
namely
\begin{equation}\label{debole}
\int_{\R}\nabla \phi\cdot \nabla v\,dx=\int_{\R} K(x)|u|^2v\,dx \quad \mbox{ for all }v\in \mathcal{D}^{1,2}(\mathbb{R}^3,\mathbb{R}).
\end{equation}
Moreover, $\phi_{|u|}$ has the following representation formula
\[
\phi_{|u|}(x)=\frac{1}{4\pi}\int_{\mathbb{R}^3}\frac{K(y)|u(y)|^2}{|x-y|}\,dy.
\]
Besides, by \eqref{Lu} and \eqref{debole}, we have
\begin{equation}\label{normphi}
\|\phi_{|u|}\|_{\mathcal{D}^{1,2}}^2 = \int_{\mathbb{R}^3}K(x)\phi_{|u|}|u|^2\,dx\leq S^{-\frac{3}{2}}|K|_2\Vert |u|\Vert^2_{\mathcal{D}^{1,2}}\Vert \phi_{|u|}\Vert_{\mathcal{D}^{1,2}}\leq S^{-\frac{3}{2}}\left|K\right|_2\Vert u\Vert_A^2\Vert\phi_{|u|}\Vert_{\mathcal{D}^{1,2}},
\end{equation}
and so
\begin{equation}\label{S-1}
\|\phi_{|u|}\|_{\mathcal{D}^{1,2}}\leq  S^{-\frac{3}{2}}|K|_2\Vert |u|\Vert^2_{\mathcal{D}^{1,2}} \leq S^{-\frac{3}{2}}\left|K\right|_2\Vert u\Vert_A^2.
\end{equation}

The next lemma provides some properties of $\phi_{|u|}$; the proof is similar to that of \cite[Proposition 2.2]{CM2016}, but for the sake of completeness, we include the details.
\begin{lemma}\label{propertyofphi}
    Assume that $u_n\rightharpoonup u$ in $\HA$. Then
    \begin{enumerate}[{\rm (a)}]
        \item $\phi_{|u_n|}\rightarrow\phi_{|u|}$ in $\mathcal{D}^{1,2}(\mathbb{R}^3,\mathbb{R})$;
                \item $\ds\I K(x)\phi_{|u_n|}|u_n|^2dx\rightarrow\I K(x)\phi_{|u|}|u|^2dx$;
            \item $\ds\Re\left(\I K(x)\phi_{|u_n|}u_n\overline{\varphi}dx\right)\rightarrow\Re\left(\I K(x)\phi_{|u|}u\overline{\varphi}dx\right)$ for any $\varphi\in\HA$.
    \end{enumerate}
\end{lemma}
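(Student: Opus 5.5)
The plan is to use the fact that $K\in L^2$ makes the map $u\mapsto K|u|^2$ compact from weakly convergent sequences in $\HA$ into the dual of $\mathcal{D}^{1,2}(\mathbb{R}^3,\mathbb{R})$.

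First I set $w_n:=|u_n|$ and $w:=|u|$. By the diamagnetic inequality \eqref{diamagnetic}, $\{w_n\}$ is bounded in $H^1(\mathbb{R}^3,\mathbb{R})$, hence in $L^6$. Rellich–Kondrachov gives $u_n\to u$ in $L^t_{\rm loc}$ for $t<6$, so $|u_n|^2\to|u|^2$ in $L^3(B_R)$ for every $R$.

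For (a), I write $\psi_n:=\phi_{|u_n|}-\phi_{|u|}$. Testing \eqref{debole} with $v=\psi_n$ and using H\"older and \eqref{D6} gives
\[
\|\psi_n\|_{\mathcal{D}^{1,2}}^2=\int_{\R}K(|u_n|^2-|u|^2)\psi_n\,dx\leq S^{-1/2}\big|K(|u_n|^2-|u|^2)\big|_{6/5}\|\psi_n\|_{\mathcal{D}^{1,2}}.
\]
So it suffices to show $|K(|u_n|^2-|u|^2)|_{6/5}\to0$. I split $\R=B_R\cup B_R^c$. On $B_R^c$, H\"older with exponents $2$ and $3$ gives the bound $|K|_{L^2(B_R^c)}\,\big||u_n|^2-|u|^2\big|_3\leq C|K|_{L^2(B_R^c)}$, which is small uniformly in $n$ for $R$ large since $K\in L^2$. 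On $B_R$, the same H\"older bound gives $|K|_2\,\big||u_n|^2-|u|^2\big|_{L^3(B_R)}\to0$. The only delicate point is this splitting: local compactness together with the $L^2$-tail of $K$ is what makes the argument work.

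For (b), I write
\[
\int K\phi_{|u_n|}|u_n|^2-\int K\phi_{|u|}|u|^2=\int K(\phi_{|u_n|}-\phi_{|u|})|u_n|^2+\int K\phi_{|u|}(|u_n|^2-|u|^2).
\]
The first term is bounded by $|K|_2|\psi_n|_6|u_n|_6^2\to0$ by (a). The second equals $\int\nabla\phi_{|u|}\cdot\nabla\phi_{|u_n|}-\|\phi_{|u|}\|^2_{\mathcal{D}^{1,2}}$ by \eqref{debole}, which tends to $0$ by (a).

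For (c), I split in the same way:
\[
\int K\phi_{|u_n|}u_n\bar\varphi-\int K\phi_{|u|}u\bar\varphi=\int K\psi_n u_n\bar\varphi+\int K\phi_{|u|}(u_n-u)\bar\varphi.
\]
The first term is bounded by $|K|_2|\psi_n|_6|u_n|_6|\varphi|_6\to0$. For the second, the map $v\mapsto\Re\int K\phi_{|u|}v\bar\varphi$ is a continuous linear functional on $\HA$, bounded by $|K|_2|\phi_{|u|}|_6|\varphi|_6|v|_6$, so weak convergence $u_n\rightharpoonup u$ sends it to $0$. Taking real parts finishes the proof.
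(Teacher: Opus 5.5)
Your reduction of (a) to proving $\big|K(|u_n|^2-|u|^2)\big|_{6/5}\to0$ is fine; it is equivalent to the paper's estimate of $\|\mathcal{L}_{|u_n|}-\mathcal{L}_{|u|}\|_{(\mathcal{D}^{1,2})^*}$. The tail estimate on $B_R^c$ is also fine, and your arguments for (b) and (c) are correct once (a) is known. The gap is the step on $B_R$. You assert that Rellich--Kondrachov yields $|u_n|^2\to|u|^2$ in $L^3(B_R)$. Since $\big||u_n|^2\big|_{L^3(B_R)}=|u_n|_{L^6(B_R)}^2$, this is convergence at the critical exponent $6$, which Rellich--Kondrachov does not give. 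It is false in general. Take $U\in C_c^\infty(B_1)$ with $U\not\equiv0$ and set $u_n(x)=n^{1/2}U(nx)$. These functions converge weakly to $0$ in $H^1_A(\mathbb{R}^3,\mathbb{C})$, since they are bounded and tend to $0$ in $L^2$. Yet $\int_{B_1}|u_n|^6\,dx=\int|U|^6\,dx>0$ for every $n$. Because $L^2$ and $L^3$ are exactly the H\"older pair landing in $L^{6/5}$, there is no spare integrability. Local compactness of $u_n$ alone cannot help, and your bound $|K|_2\,\big||u_n|^2-|u|^2\big|_{L^3(B_R)}$ need not tend to zero.

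The missing ingredient is the equi-integrability of $K^2$ on sets of small measure. The paper splits $B_R$ into two parts:
\begin{itemize}
\item On $\mathcal{A}_M=\{K>M\}$, the integral $\int_{\mathcal{A}_M}K^2\,dx$ is small for $M$ large, while $\big||u_n|^2-|u|^2\big|_3$ is merely bounded.
\item On the complement, $K\le M$, so one only needs $|u_n|^2-|u|^2\to0$ in $L^{6/5}(B_R)$. This does follow from $u_n\to u$ in $L^{12/5}(B_R)$ via $\big||u_n|^2-|u|^2\big|\le(|u_n|+|u|)\,|u_n-u|$.
\end{itemize}
Alternatively, you can combine a.e.\ convergence (along subsequences) with Vitali's theorem on $B_R$. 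The needed uniform integrability comes from the bound $\int_E\big|K(|u_n|^2-|u|^2)\big|^{6/5}dx\le C\big(\int_E K^2\,dx\big)^{3/5}$, valid for every measurable $E$. With either repair, the rest of your argument goes through.
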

\begin{proof}
\hspace{1cm}
    \begin{enumerate}[{\rm (a)}]
        \item For each $u\in\HA$, the Lax-Milgram theorem implies that
        \begin{equation*}
    \|\phi_{|u|}\|_{\mathcal{D}^{1,2}}=\|\mathcal{L}_{|u|}\|_{(\mathcal{D}^{1,2})^*},
        \end{equation*}
        Thus, it is sufficient to show that as $n\to\infty$,
        \begin{equation}\label{eq:mathcalLun}
        \|\mathcal{L}_{|u_n|}-\mathcal{L}_{|u|}\|_{(\mathcal{D}^{1,2})^*}\to0.
        \end{equation}
        Let $\epsilon>0$. Then there exists $R_\epsilon>0$ large enough such that $ |K|_{L^2(\mathbb{R}^3 \setminus B_{R_\epsilon}(0))}<\epsilon$. For any $v\in\mathcal{D}^{1,2}(\R,\mathbb{R})$, one has by \eqref{D6}
        \begin{equation*}
\begin{aligned}
&\quad\left|\mathcal{L}_{|u_n|}(v) - \mathcal{L}_{|u|}(v)\right|\\
&= \int_{\mathbb{R}^3} K(x)(|u_n|^2 - |u|^2)vdx \\
&\leq \int_{\mathbb{R}^3 \setminus B_{R_\epsilon}(0)} K(x) \left||u_n|^2 - |u|^2\right| |v|dx + \int_{B_{R_\epsilon}(0)} K(x) \left||u_n|^2 - |u|^2\right| |v|  dx \\
&\leq | K|_{L^2(\mathbb{R}^3 \setminus B_{R_\epsilon}(0)} \left||u_n|^2 - |u|^2\right|_3 |v|_6 + \left( \int_{B_{R_\epsilon}(0)} |K(x)|^{\frac{6}{5}} \left||u_n|^2 - |u|^2\right|^{\frac{6}{5}}  dx \right)^{\frac{5}{6}} | v|_6 \\
&\leq \left( C\epsilon  + \left( \int_{B_{R_\epsilon}(0)} |K(x)|^{\frac{6}{5}} \left(|u_n| + |u|\right)^{\frac{6}{5}} \left||u_n| - |u|\right|^{\frac{6}{5}} dx \right)^{\frac{5}{6}} \right) \|v\|_{\mathcal{D}^{1,2}}
\end{aligned}
\end{equation*}
for some $C>0$. Given $M>0$, set $\mathcal{A}_M=\{x\in B_{R_\epsilon}(0):K(x)>M \}$. Since $K\in L^2(\R,\mathbb{R})$, it holds that $|\mathcal{A}_m|\to 0$ as $M\to\infty$, where  $|\mathcal{A}_m|$ denotes the measure of $\mathcal{A}_m$. Thus, for $M$ large enough, we have
\begin{equation*}
    \left( \int_{\mathcal{A}_M} K^2 dx \right)^{\frac{3}{5}}<\epsilon.
\end{equation*}
Moreover, up to a subsequence, we know that 
\begin{equation*}
    u_n\to u\quad {\rm in~}L^{\frac{12}{5}}_{{\rm loc}}(\R,\mathbb{C}).
\end{equation*}
Then
\begin{equation*}
\begin{aligned}
&\int_{B_{R_\epsilon}(0)}  |K(x)|^{\frac{6}{5}} \big(|u_n| + |u|\big)^{\frac{6}{5}} \big||u_n| - |u|\big| ^{\frac{6}{5}}dx \\
&= \int_{\mathcal{A}_M}  |K(x)|^{\frac{6}{5}} \big(|u_n| + |u|\big)^{\frac{6}{5}} \big||u_n| - |u|\big|^{\frac{6}{5}} dx + \int_{B_{R_\epsilon}(0) \setminus \mathcal{A}_M} |K(x)|^{\frac{6}{5}} \big(|u_n| + |u|\big)^{\frac{6}{5}} \big||u_n| - |u|\big|^{\frac{6}{5}} dx \\
&\leq \left( \int_{\mathcal{A}_M} K^2 dx \right)^{\frac{3}{5}} \left( \int_{\mathbb{R}^3}  \big(|u_n| + |u|\big)^6 dx \right)^{\frac{1}{5}} \left( \int_{\mathbb{R}^3}  \big||u_n| - |u|\big|^6 dx \right)^{\frac{1}{5}} \\
&\quad + M^\frac{6}{5} \left( \int_{B_{R_\epsilon}(0)}  \big(|u_n| + |u|\big)^{\frac{12}{5}}  dx \right)^{\frac{1}{2}} \left( \int_{B_{R_\epsilon}(0)}  \big||u_n| - |u|\big|^{\frac{12}{5}} dx \right)^{\frac{1}{2}} \\
&\leq C\epsilon + o(1).
\end{aligned}
\end{equation*}
Therefore,  \eqref{eq:mathcalLun} follows recalling that $\{u_n\}$ is bounded in $\HA$, and so in $L^6(\R,\C)$.
        \item Note that by \eqref{normphi} we have 
\[
\|\phi_{|u_n|}\|_{\mathcal{D}^{1,2}}^2-\|\phi_{|u|}\|_{\mathcal{D}^{1,2}}^2=\I K(x)|u_n|^2\phi_{|u_n|}dx-\I K(x)|u|^2\phi_{|u|}dx.
\]
By point (a) the conclusion follows immediately.
\item It holds that
          \begin{equation*}
        \begin{aligned}
              &\quad\Re\left(\I \left(K(x)\phi_{|u_n|}u_n\overline{\varphi}-K(x)\phi_{|u|}u\overline{\varphi}\right)dx\right)\\
              &= \Re\left(\I K(x)\phi_{|u_n|}\overline{\varphi}\left(u_n-u\right)dx\right)+\Re\left(\I K(x)u\overline{\varphi}\left(\phi_{|u_n|}-\phi_{|u|}\right)\,dx\right).
        \end{aligned}
        \end{equation*}
        From (a) and the H\"older inequality, we have that as $n\to\infty$,
        \begin{equation*}
            \Re \left(\I K(x)\left(\phi_{|u_n|}-\phi_{|u|}\right)u\overline{\varphi}\,dx\right)\to0,
        \end{equation*}
since
\begin{equation}\label{K(x)}
    \left|\I K(x)\left(\phi_{|u_n|}-\phi_{|u|}\right)u\overline{\varphi}\,dx\right|\leq |K|_2|\phi_{|u_n|}-\phi_{|u|}|_6|u|_6|\varphi|_6.
\end{equation}
On the other hand, since $u_n\rightharpoonup u$ in $H_A^1(\R,\C)$, one has that $u_n\rightharpoonup u$ in $ L^6(\R,\mathbb{C})$. Then, recalling that $\phi_{|u_n|}\to \phi_{|u|}$ in $L^6(\R,\mathbb{R})$ by (a) and \eqref{D6}, so that $K\phi_{|u_n|}\bar \varphi\to K\phi_{|u|}\bar \varphi$ in $L^{6/5}(\R,\C)$, we find that  as $n\to\infty$
   \begin{equation*}
     \Re\left(\I K(x)\phi_{|u_n|}\overline{\varphi}\left(u_n-u\right)dx\right)\to 0.  
   \end{equation*}
This completes the proof of (c). 
    \end{enumerate}
\end{proof}
    
Define the $C^1$ energy functional $E_A:H_A^1(\mathbb{R}^3,\mathbb{C})\rightarrow\mathbb{R}$ as
		\begin{align*}
		E_A(u):&=\dfrac{1}{2}\int_{\mathbb{R}^3}\left(|\nabla_Au|^2+V(x)|u|^2\right)dx+\frac{1}{4}\int_{\mathbb{R}^3}K(x)\phi_{|u|}|u|^2\,dx-\dfrac{1}{p}\int_{\mathbb{R}^3}|u|^pdx\\		
		&=\dfrac{1}{2}\|u\|_A^2+\frac{1}{4}\int_{\mathbb{R}^3}K(x)\phi_{|u|}|u|^2\,dx-\dfrac{1}{p}\int_{\mathbb{R}^3}|u|^pdx.
	\end{align*}
	Clearly, weak solutions of \eqref{problem} coincide with critical points of $E_A$. Moreover, all  non-trivial critical points of $E_A$ lie on the Nehari manifold
	\begin{equation*}
		\mathcal{N}_A:=\{u\in H_A^1(\mathbb{R}^3,\mathbb{C)}\setminus\{0\}:E_A^\prime(u)u=0\}.
	\end{equation*}
Let us remark that $E_A$ is bounded from below on $\mathcal{N}_A$. Indeed,
\begin{equation}\label{nehari}
E_A^\prime(u)u=\|u\|_A^2+\int_{\R}K(x)\phi_{|u|}|u|^2dx-\int_{\R}|u|^pdx,
\end{equation}
so that, when $u\in \mathcal{N}_A$, we have
\[
E_A(u)=\left(\frac{1}{2}-\frac{1}{p}\right)\|u\|_A^2+\left(\frac{1}{4}-\frac{1}{p}\right)\int_{\R}K(x)\phi_{|u|}|u|^2dx.
\]
Recalling that $p>4$, we find that $E_A$ is bounded from below on $\mathcal{N}_A$. Therefore, we can define the least energy
		\begin{equation*}
		m_A:=\inf_{u\in\mathcal{N}_A}E_A(u).
	\end{equation*}	
	
Next, we consider the following autonomous limit problem related to \eqref{problem}
	\begin{equation*}\label{limitproblem}\tag{\textit{$P_\infty$}}
	\left\{
	\begin{aligned}
		&-\Delta u+V_\infty u=|u|^{p-2}u,\quad \mathrm{in}~\mathbb{R}^3,\\
		&u\in H^1(\mathbb{R}^3,\mathbb{C}).
	\end{aligned}
	\right.
	\end{equation*}
	The Euler-Lagrange functional $E_0:H^1(\mathbb{R}^3,\mathbb{C})\rightarrow\mathbb{R}$ associated with \eqref{limitproblem} is given by
	\begin{equation*}
			E_0(u):=\dfrac{1}{2}\int_{\mathbb{R}^3}\left(|\nabla u|^2+V_\infty|u|^2\right)dx-\dfrac{1}{p}\int_{\mathbb{R}^3}|u|^pdx.	
	\end{equation*}
	Likewise, we define the Nehari manifold
	\begin{equation*}
		\mathcal{N}_0:=\{u\in H^1(\mathbb{R}^3,\mathbb{C})\setminus\{0\}:E_0^\prime(u)u=0\}
	\end{equation*}
	and the corresponding least energy 
	\begin{equation*}
		m_0:=\inf_{u\in\mathcal{N}_0}E_0(u).
	\end{equation*}
	According to \cite[Lemma 2.6]{DL2013}, we know that $\mathcal{N}_0\neq\emptyset$ and $m_0>0$.
	It is natural to ask whether \eqref{limitproblem} is connected with the following problem
	\begin{equation*}\label{limitproblemR}\tag{\textit{$P_{\infty,\mathbb{R}}$}}
	\left\{
	\begin{aligned}
		&-\Delta u+V_\infty u=|u|^{p-2}u,\quad \mathrm{in}~\mathbb{R}^3,\\
		&u\in H^1(\mathbb{R}^3,\mathbb{R}).
	\end{aligned}
	\right.
\end{equation*}
From \cite{BL1983}, it is well-known that \eqref{limitproblemR} has a positive least energy solution $\omega\in H^1(\mathbb{R}^3,\mathbb{R})$. Furthermore, $\omega$ is unique (up to a translations), radially symmetric decreasing and there exist $C_1,C_2>0$ such that
\begin{equation}\label{decayestimate}
C_1(1+|x|)^{-1}e^{-\sqrt{V_\infty}|x|}\leq|D^i\omega(x)|\leq C_2(1+|x|)^{-1}e^{-\sqrt{V_\infty}|x|},\quad {\rm for~}i=0,1.
\end{equation}

The following result connects the least energy levels of \eqref{limitproblem} and \eqref{limitproblemR}, see \cite[Lemma 2.5]{DL2013} or \cite[Lemma 2.3]{CJS2009}.
\begin{proposition}\label{thesamelevel}
\eqref{limitproblem} and \eqref{limitproblemR} have the same least energy level.
\end{proposition}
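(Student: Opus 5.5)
We have to show that $m_0$, the least energy of \eqref{limitproblem} over complex functions, equals the least energy of \eqref{limitproblemR} over real functions, which we denote $m_{0,\mathbb{R}}:=\inf_{\mathcal{N}_{0,\mathbb{R}}}E_0$, where $\mathcal{N}_{0,\mathbb{R}}:=\mathcal{N}_0\cap H^1(\mathbb{R}^3,\mathbb{R})$. Since $H^1(\mathbb{R}^3,\mathbb{R})\subset H^1(\mathbb{R}^3,\mathbb{C})$, we have $\mathcal{N}_{0,\mathbb{R}}\subset\mathcal{N}_0$, and so $m_0\le m_{0,\mathbb{R}}$ immediately. The work is in the reverse inequality $m_{0,\mathbb{R}}\le m_0$, which we plan to get through the diamagnetic inequality with $A\equiv0$, i.e. $|\nabla|u||\le|\nabla u|$ a.e.

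First we use the standard Nehari characterization. For $u\in H^1(\mathbb{R}^3,\mathbb{C})\setminus\{0\}$ the fibering map $t\mapsto E_0(tu)$, $t>0$, has a unique maximum point $t_u>0$, given by
\[
t_u^{p-2}=\frac{\int_{\mathbb{R}^3}(|\nabla u|^2+V_\infty|u|^2)dx}{\int_{\mathbb{R}^3}|u|^pdx},
\]
and $t_uu\in\mathcal{N}_0$. Moreover, if $u\in\mathcal{N}_0$ then $t_u=1$, so $E_0(u)=\max_{t>0}E_0(tu)$. A direct computation shows
\[
\max_{t>0}E_0(tu)=\Big(\frac12-\frac1p\Big)\left(\frac{\int(|\nabla u|^2+V_\infty|u|^2)}{\left(\int|u|^p\right)^{2/p}}\right)^{\frac{p}{p-2}},
\]
which is increasing in the Rayleigh-type quotient $Q(u):=\int(|\nabla u|^2+V_\infty|u|^2)\big/|u|_p^2$. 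The same formulas hold in the real space.

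Now take any $u\in\mathcal{N}_0$ and set $v:=|u|\in H^1(\mathbb{R}^3,\mathbb{R})$. We have $|v|_2=|u|_2$ and $|v|_p=|u|_p$, and by the diamagnetic inequality $\int|\nabla v|^2\le\int|\nabla u|^2$. Hence $Q(v)\le Q(u)$, and therefore
\[
E_0(t_vv)=\max_{t>0}E_0(tv)\le\max_{t>0}E_0(tu)=E_0(u).
\]
Since $t_vv\in\mathcal{N}_{0,\mathbb{R}}$, this gives $m_{0,\mathbb{R}}\le E_0(u)$. Taking the infimum over $u\in\mathcal{N}_0$ yields $m_{0,\mathbb{R}}\le m_0$, and hence equality.

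The only delicate point is justifying $|u|\in H^1$ with the gradient bound for complex $u$. This is exactly \eqref{diamagnetic} with $A=0$, see \cite{LL2001}, so no real obstacle remains. As a consequence, the real ground state $\omega$ from \cite{BL1983} attains $m_0$, since $E_0(\omega)=m_{0,\mathbb{R}}=m_0$.
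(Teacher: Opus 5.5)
Your proof is correct: the inclusion $H^1(\mathbb{R}^3,\mathbb{R})\subset H^1(\mathbb{R}^3,\mathbb{C})$ gives $m_0\le m_{0,\mathbb{R}}$. Passing from $u\in\mathcal{N}_0$ to $|u|$, using $|\nabla|u||\le|\nabla u|$ and the fact that $\max_{t>0}E_0(tu)$ is increasing in your quotient $Q$, gives the reverse inequality. The paper does not prove this proposition itself; it only cites \cite[Lemma 2.5]{DL2013} and \cite[Lemma 2.3]{CJS2009}, and your comparison of $u$ with $|u|$ via the diamagnetic inequality is the standard argument for this fact.
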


By virtue of this result, we immediately know that $\omega$ is also a least energy solution of \eqref{limitproblem}, i.e.  $E_0^\prime(\omega)=0$ and $E_0(\omega)=m_0$. 

The following properties of $\mathcal{N}_A$ are basic. For the purpose of completeness, we show them briefly.

\begin{lemma}\label{properties}
	The following properties of $\mathcal{N}_A$ hold:
	\begin{enumerate}[{\rm (a)}]
		\item There exists a positive number $\rho$ such that $\Vert u\Vert_A>\rho$  for every $u\in\mathcal{N}_A$;
		\item $\mathcal{N}_A$ is a closed $C^1$--submanifold of $H_A^1(\mathbb{R}^3,\mathbb{C})$;
		\item $\mathcal{N}_A$ is a natural constraint of $E_A$;
		\item For each $u\in H_A^1(\mathbb{R}^3,\mathbb{C})$, there exists a unique positive number $t_u$ such that $t_uu\in\mathcal{N}_A$. Moreover, $E_A(t_uu)>E_A(tu)$ for all $t>0$ and $t\neq t_u$. 
	\end{enumerate}
\end{lemma}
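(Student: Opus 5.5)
The natural approach is to work with the fibering map $t\mapsto E_A(tu)$ for fixed $u\neq0$. Set
\[
a=\|u\|_A^2,\qquad b=\I K(x)\phi_{|u|}|u|^2dx\ge0,\qquad c=|u|_p^p>0 .
\]
Since $\phi_{|tu|}=t^2\phi_{|u|}$ by uniqueness in Lax--Milgram, we have
\[
h(t):=E_A(tu)=\frac{a}{2}t^2+\frac{b}{4}t^4-\frac{c}{p}t^p .
\]
By \eqref{nehari}, $tu\in\mathcal N_A$ for $t>0$ exactly when $h'(t)=0$, that is,
\[
a+bt^2=ct^{p-2}.
\]

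\textbf{Step (d).} Divide by $t^2$: the function $t\mapsto at^{-2}+b$ is strictly decreasing from $+\infty$ to $b$, while $t\mapsto ct^{p-4}$ is strictly increasing from $0$ to $+\infty$, because $p>4$. Hence there is exactly one crossing $t_u$. Moreover $h'>0$ on $(0,t_u)$ and $h'<0$ on $(t_u,\infty)$, so $t_u$ is the strict global maximizer of $h$ on $(0,\infty)$.

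\textbf{Step (a).} For $u\in\mathcal N_A$, the Sobolev embedding gives
\[
\|u\|_A^2\le \|u\|_A^2+\I K\phi_{|u|}|u|^2=|u|_p^p\le C\|u\|_A^p .
\]
Therefore $\|u\|_A\ge C^{-1/(p-2)}$, and any $\rho$ smaller than this constant works.

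\textbf{Step (b).} Write $\mathcal N_A=J^{-1}(0)\setminus\{0\}$ with $J(u)=E_A'(u)u$. The map $J$ is $C^1$, since the nonlocal term $u\mapsto\I K\phi_{|u|}|u|^2$ is $C^1$; indeed it is $4$ times the $C^1$ quartic part of $E_A$. Using homogeneity, for $u\in\mathcal N_A$,
\[
J'(u)u=2\|u\|_A^2+4b-p\,c=(2-p)\|u\|_A^2+(4-p)b<0 .
\]
Closedness of $\mathcal N_A$ then follows from the continuity of $J$ together with the lower bound in (a), which keeps $\mathcal N_A$ away from $0$. Since $J'(u)\neq0$ on $\mathcal N_A$, the implicit function theorem gives the $C^1$ submanifold structure.

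\textbf{Step (c).} Let $u\in\mathcal N_A$ be a critical point of $E_A|_{\mathcal N_A}$. Then $E_A'(u)=\lambda J'(u)$ for some multiplier $\lambda$. Testing with $u$ gives
\[
0=E_A'(u)u=\lambda J'(u)u .
\]
Since $J'(u)u<0$, we get $\lambda=0$, so $u$ is a free critical point of $E_A$.

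The only mildly delicate point is checking that $J$ is $C^1$ through the nonlocal term. This follows from the bilinear structure $\phi_{|u|}=\mathcal{L}_{|u|}$-representative together with \eqref{S-1}, since the map $u\mapsto \I K\phi_{|u|}|u|^2$ is a bounded quartic form.
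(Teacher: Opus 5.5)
Your proposal is correct and follows essentially the same route as the paper: the Sobolev bound $\|u\|_A^2\le |u|_p^p\le C\|u\|_A^p$ for (a), the regular-value argument with $J'(u)u<0$ on $\mathcal{N}_A$ for (b), the Lagrange multiplier tested against $u$ for (c), and the fibering map $t\mapsto E_A(tu)$ for (d). The differences are cosmetic. When showing $J'(u)u<0$ you eliminate $|u|_p^p$ and get $(2-p)\|u\|_A^2+(4-p)b$, where the paper eliminates the Poisson term and gets $-2\|u\|_A^2+(4-p)|u|_p^p$. Your monotonicity comparison of $at^{-2}+b$ with $ct^{p-4}$ also spells out the uniqueness and strict maximality in (d), which the paper leaves as ``easy to verify''. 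Both arguments implicitly require $u\neq 0$ there.
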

		\begin{proof}
		\hspace*{\fill}
\begin{itemize}
\item[(a)] According to the embedding
			$\HA\hookrightarrow L^t(\mathbb{R}^3,\mathbb{C})$ for every $t\in[2,6]$, by \eqref{nehari} we obtain
			\begin{align*}
			0=E_A'(u)u=\Vert u\Vert_A^2+\int_{\mathbb{R}^3}K(x)\phi_{|u|}|u|^2\,dx-\int_{\mathbb{R}^3}|u|^p\,dx\geq \Vert u\Vert_A^2-C\Vert u\Vert_A^p.
			\end{align*}
Since $4<p<6$ and $u\neq0$, we find $\Vert u\Vert_A>\rho $ for some positive number $\rho$.
\item[(b)]  Set $G_A(u):=E_A^\prime(u)u$.
			Noting that $G_A$ is continuous and $\mathcal{N}_A\cup\{0\}=G_A^{-1}(0)$, we deduce  that $\mathcal{N}_A\cup\{0\}$ is closed. From (a) we know that $0$ is an isolated point, so  $\mathcal{N}_A$ is closed in $\HA$, as well. For each $u\in\mathcal{N}_A$, one has
			\begin{align*}
				G_A^\prime(u)u&=2\Vert u\Vert_A^2+4\int_{\mathbb{R}^3}K(x)\phi_{|u|}|u|^2\,dx-p\I|u|^p\,dx\\
                &=-2\Vert u\Vert_A^2+(4-p)\I|u|^p\,dx\\
				&\leq-2\Vert u\Vert_A^2\leq-2\rho^2<0,
			\end{align*}
			which implies that $0$ is a regular value of $G_A(u)$. Since $G_A(u)$ is of class $C^1$, $\mathcal{N}_A$ is a $C^1$-submanifold of $\HA$.
\item[(c)]  Let $u$ be a critical point of $E_A$ on $\mathcal{N}_A$. Due to the Lagrange multiplier theorem, there exists $\lambda\in\mathbb{R}$ such that
			\begin{equation*}
				E_A^\prime(u)-\lambda G_A^\prime(u)=0.
			\end{equation*}
			Then
			\begin{equation*}
				{E_A^\prime(u)u-\lambda G_A^\prime(u)u=0.}
			\end{equation*}
			Since {$E_A^\prime(u)u=0 $ and $ G_A^\prime(u)u<0$} from (b), we get that $\lambda=0$. Hence, it follows that $u$ is a critical point of $E_A$ in $\HA$.
\item[(d)]   For each $u\in \HA\backslash\{0\}$, we consider the fiber mapping
			\begin{equation*}
				\psi(t):=E_A(tu)=\dfrac{t^2}{2}\Vert u\Vert_A^2+\frac{t^4}{4}\int_{\mathbb{R}^3}K(x)\phi_{|u|}|u|^2\,dx-\dfrac{t^p}{p}\I|u|^p\,dx,\quad t>0\footnote{Here we have used that $\phi_{t|u|}=t^2\phi_{|u|}$, which can be proved as in \cite[Proposition 3.1]{DM2004}.}.
			\end{equation*}
			Then
\[
\begin{aligned}
				\psi^\prime(t)
				&= t\Vert u\Vert_A^2+t^3\int_{\mathbb{R}^3}K(x)\phi_{|u|}|u|^2\,dx-t^{p-1}\I|u|^p\,dx \\
				&=\dfrac{E_A^\prime(tu)(tu)}{t}.
\end{aligned}
\]
Hence, $$tu\in\mathcal{N}_A\iff \psi^\prime(t)=0.$$
		Since $4<p<6$, it is easy to verify that for every $u\in \HA$, $\psi^\prime(t)$  has a unique positive root, so that there exists a unique number $t_u$  such that $t_uu\in\mathcal{N}_A$. Moreover, one can easily find out that $t_u$ is a strict maximum point of $\psi$. This completes the proof.  \qedhere
		\end{itemize}
	\end{proof}

\begin{remark}\label{Re.N0property}
It is straightforward to verify that Lemma analogous to Lemma \ref{properties} also holds for $\mathcal{N}_0$.
	\end{remark}

The next lemma applies to dealing with the energy estimates, see \cite[Lemma 2.9]{CM2016}.
\begin{lemma}\label{IntergalLemma}
Let $g\in L^\infty(\mathbb{R}^3)$ and $h\in L^1(\mathbb{R}^3, \mathbb{R})$ be such that, for some $\kappa\geq0$, $\nu\geq0$ and $\gamma\in\mathbb{R}$,
   \begin{align*}
   	\lim_{|x|\to\infty}g(x)e^{\kappa|x|}|x|^\nu=\gamma,\quad\int_{\mathbb{R}^3}|h(x)|e^{\kappa|x|}|x|^\nu dx<\infty.
   \end{align*}
   Then for every $z\in{\mathbb{R}^3\setminus\{0\}}$ we have
   \begin{align*}
   	\lim_{\rho\to\infty}\left(\int_{\mathbb{R}^3}g(x+\rho z)h(x)\,dx\right)e^{\kappa|\rho z|}|\rho z|^\nu =\gamma\int_{\mathbb{R}^3}h(x)e^{-\frac{\kappa(x\cdot z)}{|z|}}\,dx.
   \end{align*}
\end{lemma}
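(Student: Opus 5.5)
The plan is to reduce to the case $\nu$-weighted exponential asymptotics by a change of variables and dominated convergence. Fix $z\neq0$, set $\hat z:=z/|z|$, and write
\[
\left(\int_{\mathbb{R}^3}g(x+\rho z)h(x)\,dx\right)e^{\kappa|\rho z|}|\rho z|^\nu=\int_{\mathbb{R}^3}F_\rho(x)\,dx,
\]
where
\[
F_\rho(x):=g(x+\rho z)e^{\kappa|x+\rho z|}|x+\rho z|^\nu\cdot \frac{e^{\kappa|\rho z|}|\rho z|^\nu}{e^{\kappa|x+\rho z|}|x+\rho z|^\nu}\,h(x).
\]
Pointwise, for fixed $x$ we have $|x+\rho z|\to\infty$, so the first factor tends to $\gamma$. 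For the second factor, $|x+\rho z|-\rho|z|\to x\cdot\hat z$ (expand $|x+\rho z|=\rho|z|\,(1+2x\cdot z/(\rho|z|^2)+|x|^2/(\rho^2|z|^2))^{1/2}$), and $|\rho z|^\nu/|x+\rho z|^\nu\to1$. Hence $F_\rho(x)\to\gamma e^{-\kappa x\cdot\hat z}h(x)$ for a.e. $x$. This would give exactly the claimed limit once we justify passing the limit inside.

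For domination, which is the main obstacle, I would first use $g\in L^\infty$ together with the limit hypothesis to get a global bound $|g(y)|\le C e^{-\kappa|y|}(1+|y|)^{-\nu}$ for all $y$: for $|y|\ge R_0$ this follows from the limit, and on $B_{R_0}$ from $|g|_\infty$. Then
\[
|F_\rho(x)|\le C|h(x)|\,e^{\kappa(\rho|z|-|x+\rho z|)}\frac{(\rho|z|)^\nu}{(1+|x+\rho z|)^\nu}.
\]
The triangle inequality gives $\rho|z|-|x+\rho z|\le|x|$, so the exponential is at most $e^{\kappa|x|}$. For the polynomial factor I would use $\rho|z|\le|x+\rho z|+|x|\le(1+|x+\rho z|)(1+|x|)$, which yields $(\rho|z|)^\nu(1+|x+\rho z|)^{-\nu}\le(1+|x|)^\nu$. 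Therefore $|F_\rho(x)|\le C|h(x)|e^{\kappa|x|}(1+|x|)^\nu$.

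This majorant is integrable. Indeed, $(1+|x|)^\nu\le 2^\nu(1+|x|^\nu)$, the term with $|x|^\nu$ is integrable by hypothesis, and on $|x|\ge1$ we have $e^{\kappa|x|}\le e^{\kappa|x|}|x|^\nu$. On $|x|\le1$, $e^{\kappa|x|}|h|\le e^\kappa|h|$, which is integrable since $h\in L^1$.

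With the bound in hand, the dominated convergence theorem concludes the proof. The case $\nu=0$ is covered by the same argument, the polynomial factors being identically $1$.
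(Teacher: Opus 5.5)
Your argument is correct and complete. The pointwise limit $F_\rho(x)\to\gamma e^{-\kappa x\cdot\hat z}h(x)$, combined with the $\rho$-independent majorant $C|h(x)|e^{\kappa|x|}(1+|x|)^\nu$ (from $\rho|z|-|x+\rho z|\le|x|$ and $\rho|z|\le(1+|x+\rho z|)(1+|x|)$), is the standard dominated-convergence proof of this lemma. The paper itself gives no proof and only quotes the result from \cite[Lemma 2.9]{CM2016}.

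Two small details are worth stating explicitly. First, take $R_0\ge1$ in the global bound on $g$. Second, since $g$ is only defined almost everywhere, run dominated convergence along arbitrary sequences $\rho_n\to\infty$, so that the exceptional null sets form a countable union.
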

	
	\section{A global compactness result}
	Recall that a sequence $\{u_n\}\subset \HA$ is said to be a $(PS)_d$ sequence of $E_A$ at some level $d\in\mathbb{R}$, if $E_A(u_n)\rightarrow d$ and $E_A^\prime(u_n)\rightarrow 0$. Moreover, the functional $E_A$ is said to satisfy the $(PS)_d$ condition if every $(PS)_d$ sequence of $E_A$ contains a convergent subsequence. 
	
		\begin{lemma}\label{PSbounded}
		Assume that $\{u_n\}$ is a $(PS)_d$ sequence of $E_A$ with $d>0$. Then $\{u_n\}$  is bounded in $\HA$.
	\end{lemma}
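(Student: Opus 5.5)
We use the standard combination $E_A(u_n)-\frac1p E_A'(u_n)u_n$, which exploits $p>4$. Since $\{u_n\}$ is a $(PS)_d$ sequence, $E_A(u_n)=d+o(1)$ and $|E_A'(u_n)u_n|\le \|E_A'(u_n)\|_{(H_A^1)^*}\|u_n\|_A=o(1)\|u_n\|_A$. By the expression \eqref{nehari} for $E_A'(u)u$,
\begin{equation*}
E_A(u_n)-\frac1p E_A'(u_n)u_n=\left(\frac12-\frac1p\right)\|u_n\|_A^2+\left(\frac14-\frac1p\right)\int_{\R}K(x)\phi_{|u_n|}|u_n|^2\,dx .
\end{equation*}

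We then check the sign of the nonlocal term. By \eqref{normphi}, $\int_{\R}K(x)\phi_{|u_n|}|u_n|^2dx=\|\phi_{|u_n|}\|^2_{\mathcal D^{1,2}}\ge0$, and $\frac14-\frac1p>0$ because $p>4$. Hence
\begin{equation*}
\left(\frac12-\frac1p\right)\|u_n\|_A^2\le d+1+o(1)\|u_n\|_A \quad\text{for $n$ large}.
\end{equation*}

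Finally we argue by contradiction. Suppose, along a subsequence, $\|u_n\|_A\to\infty$. Dividing the inequality above by $\|u_n\|_A^2$ gives $\frac12-\frac1p\le o(1)$, which is impossible since $p>2$. Therefore $\{u_n\}$ is bounded in $\HA$.

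Once one uses the sign of the Poisson term, no step here is a real obstacle. The hypothesis $d>0$ is not needed for this argument; it matters only in the later global compactness analysis.
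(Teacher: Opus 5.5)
Your proof is correct and is essentially the paper's argument. The only difference is the multiplier. The paper subtracts $\frac14 E_A'(u_n)u_n$ rather than $\frac1p E_A'(u_n)u_n$, which cancels the Poisson term exactly and leaves $\frac14\|u_n\|_A^2+\left(\frac14-\frac1p\right)|u_n|_p^p\ge\frac14\|u_n\|_A^2$, so it needs no sign information on $\int_{\R}K(x)\phi_{|u_n|}|u_n|^2dx$. Your choice instead cancels the $L^p$ term and uses that sign via \eqref{normphi}. You are also right that $d>0$ plays no role here.
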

	\begin{proof}
		It holds that for $n$ large enough
		\begin{align*}
			d+1+o(1)\|u_n\|_A&\geq E_A(u_n)-\dfrac{1}{4}E_A^\prime(u_n)u_n\\
			&=\frac{1}{4}\Vert u_n\Vert_A^2+\left(\frac{1}{4}-\frac{1}{p}\right)\int_{\mathbb{R}^3}|u_n|^p\,dx\\
            &\geq\frac{1}{4}\Vert u_n\Vert_A^2,
		\end{align*}
where $o(1)\to 0$ as $n\to \infty$. This yields the boundedness of $\{u_n\}$ in $\HA$.
	\end{proof}
	
	Next, we establish the global compactness lemma, which is a relative of the one presented by Struwe \cite{S1984}, and which plays a significant role in recovering the compactness of $(PS)$ sequence.
	
	\begin{lemma}\label{splitting}
		Let $\{u_n\}\subset\HA$ be  a $(PS)_d$ sequence of the functional $E_A$ with $d>0$.
		Then there exists a solution $u^0\in\HA$ of \eqref{problem} such that one of the following cases occurs:
		\begin{enumerate}[{\rm (a)}]
			\item $u_n\rightarrow u^0$  in $\HA$ (up to a subsequence); 
			\item there exists a number $k\in\mathbb{N}^+$, $k$ sequences of points $\{y_n^j\}\subset\R$ with $|y_n^j|\rightarrow\infty$ and $|y_n^j-y_n^i|\rightarrow\infty$  for $j\neq i$, $1\leq j\leq k$, and $k$ nontrivial solutions $u^1,u^2,\cdots,u^k\in\H$ of \eqref{limitproblem}
			such that
			\begin{equation*}
				u_n-\sum_{j=1}^{k}u^j(\cdot-y_n^j)\rightarrow u^0~ {\rm in}~\H \quad \mbox{(up to a subsequence)}
			\end{equation*}
			and
			\begin{equation*}
			d=	E_A(u^0)+\sum_{j=1}^{k}E_0(u^j).
			\end{equation*}
		\end{enumerate}
	\end{lemma}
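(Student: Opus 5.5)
The argument is a Struwe-type splitting: extract the weak limit, then peel off bumps at infinity one at a time, with the energy dropping by at least a fixed amount at each step.

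\textbf{Step 1: the weak limit.} By Lemma \ref{PSbounded}, $\{u_n\}$ is bounded in $\HA$. Up to a subsequence we have $u_n\rightharpoonup u^0$ in $\HA$, $u_n\to u^0$ in $L^t_{\rm loc}$ for $t\in[1,6)$, and $u_n\to u^0$ a.e.

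To see that $u^0$ is a critical point, take $\varphi\in C_c^\infty(\R,\C)$. Weak convergence handles the quadratic part, Lemma \ref{propertyofphi}(c) handles the Poisson term, and local compactness handles the $|u|^{p-2}u$ term. Hence $E_A'(u^0)=0$, so $u^0$ solves \eqref{problem}.

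\textbf{Step 2: the remainder.} Set $v_n^1:=u_n-u^0$, so $v_n^1\rightharpoonup0$. The key splitting facts are:
\begin{itemize}
\item[(i)] $\|u_n\|_A^2=\|u^0\|_A^2+\|v_n^1\|_A^2+o(1)$, by Hilbert structure.
\item[(ii)] $\int|u_n|^p=\int|u^0|^p+\int|v_n^1|^p+o(1)$, by Brezis--Lieb.
\item[(iii)] $\int K\phi_{|u_n|}|u_n|^2\to\int K\phi_{|u^0|}|u^0|^2$, by Lemma \ref{propertyofphi}(b). So the nonlocal term leaves no trace on the remainder. This is where $K\in L^2$ with $K\to0$ at infinity is used.
\item[(iv)] Since $v_n^1\rightharpoonup 0$, $A\to0$ and $V\to V_\infty$ at infinity, we get $\|v_n^1\|_A^2=\int(|\nabla v_n^1|^2+V_\infty|v_n^1|^2)+o(1)$. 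Indeed, the cross term $2\Re\int A\cdot\nabla v\,\bar v$, the term $|A|^2|v|^2$ and $(V-V_\infty)|v|^2$ all vanish: on large balls by local $L^2$ compactness, and outside them by smallness of $A$ and $V-V_\infty$.
\end{itemize}
Together these give $E_A(u_n)=E_A(u^0)+E_0(v_n^1)+o(1)$.

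For the derivatives, the same estimates plus the standard fact $|u_n|^{p-2}u_n-|u^0|^{p-2}u^0-|v_n^1|^{p-2}v_n^1\to0$ in $H^{-1}$ give $E_0'(v_n^1)\to0$ in $H^{-1}(\R,\C)$. For the Poisson term we need $\sup_{\|\varphi\|\le1}|\int K\phi_{|u_n|}u_n\bar\varphi-\int K\phi_{|u^0|}u^0\bar\varphi|\to0$. This follows from $\phi_{|u_n|}\to\phi_{|u^0|}$ in $L^6$ and $\int K\phi_{|u^0|}|v_n^1||\varphi|\le |K\phi_{|u^0|}|_{3/2}\,|v_n^1\varphi|_{L^3}$, using tail smallness of $K\phi_{|u^0|}$ in $L^{3/2}$ and local compactness. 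So $\{v_n^1\}$ is a $(PS)_{d-E_A(u^0)}$ sequence for $E_0$ modulo $o(1)$ terms.

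\textbf{Step 3: vanishing versus a bump.} If $v_n^1\to0$ in $H^1$, case (a) holds, since the $\HA$ and $H^1$ norms are equivalent on sequences weakly tending to $0$ by (iv). Otherwise set $\delta:=\limsup_n\sup_y\int_{B_1(y)}|v_n^1|^2$.
\begin{itemize}
\item If $\delta=0$, Lions' lemma gives $v_n^1\to0$ in $L^p$. Then $E_0'(v_n^1)v_n^1\to0$ forces $\|v_n^1\|\to0$, which is a contradiction.
\item If $\delta>0$, pick $y_n^1$ with $\int_{B_1(y_n^1)}|v_n^1|^2\ge\delta/2$. Since $v_n^1\rightharpoonup0$ locally, $|y_n^1|\to\infty$. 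Then $v_n^1(\cdot+y_n^1)\rightharpoonup u^1\neq0$ in $H^1$, and $u^1$ is a critical point of $E_0$ by translation invariance of $E_0$.
\end{itemize}

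\textbf{Step 4: iterate.} Set $v_n^2:=v_n^1-u^1(\cdot-y_n^1)$. The same Brezis--Lieb and Hilbert splittings give
\[
E_0(v_n^2)=E_0(v_n^1)-E_0(u^1)+o(1),\qquad E_0'(v_n^2)\to0 .
\]
Every nontrivial critical point $w$ of $E_0$ satisfies $E_0(w)\ge m_0>0$, by Remark \ref{Re.N0property} and Proposition \ref{thesamelevel}. So the procedure stops after finitely many steps, giving $k\le d/m_0$. The separations $|y_n^j-y_n^i|\to\infty$ follow because each new bump is the weak limit of a sequence that converges weakly to $0$ after the earlier translations.

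\textbf{Expected obstacle.} The main difficulty is Step 2: proving that the magnetic and potential terms, and especially the nonlocal term $K\phi_{|u|}u$, disappear uniformly in $H^{-1}$ along the remainder. For the nonlocal term I would first rely on the decay of $K$ together with Lemma \ref{propertyofphi}. One further caution: $u^0$ may have a nonzero phase gauge relative to the translated bumps, but since $A\to0$ at infinity no gauge correction is needed.
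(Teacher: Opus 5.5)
Your proposal is correct and follows the same Struwe-type route as the paper: the weak limit $u^0$ is a critical point; the energy splits via the Hilbert structure, Brezis--Lieb and Lemma~\ref{propertyofphi}(b), so the Poisson term leaves no trace on $u_n-u^0$; $E_0'(u_n-u^0)\to0$; Lions' lemma handles the vanishing case; and bumps at infinity are extracted iteratively until the energy quantum $m_0$ stops the process. The one imprecision is your claim that ``the same estimates'' kill the magnetic cross term $\Re\int_{\R}(-i\nabla v_n^1)\cdot A(x)\overline{\varphi}\,dx$ uniformly in $\|\varphi\|\le1$: there the gradient falls on $v_n^1$, so local compactness of $v_n^1$ is useless, and you need instead (as the paper does via the operator $\mathcal{J}$) that $\varphi\mapsto A\varphi$ is compact from $H^1(\R,\mathbb{C})$ to $L^2(\R,\mathbb{C}^3)$ by $(A_1)$, so that $\nabla v_n^1\rightharpoonup0$ in $L^2$ gives convergence uniform over the unit ball.
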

\begin{proof}
	By Lemma \ref{PSbounded}, we know that $\{u_n\}$  is bounded in $\HA$. Hence, there exists $u^0$ such that (up to a subsequence)
			\begin{equation*}
		\left\{
		\begin{aligned}
			&u_n\rightharpoonup u^0~{\rm in}~\HA,\\~
			&u_n\rightarrow u^0~{\rm in}~L^q_{loc}(\R,\mathbb{C}),\\
			&u_n\rightarrow u^0~{\rm a.e.~on}~\R,
		\end{aligned}
		\right.
	\end{equation*}
for every $1\leq q<6$. Since
    $E_A^\prime(u_n)\rightarrow0$, by Lemma \ref{propertyofphi} (c) we obtain
	\begin{align*}
		\Re\left(\int_{\mathbb{R}^3}(\nabla_Au^0\cdot\overline{\nabla_A\varphi}+V(x)u^0\overline{\varphi})dx\right)&+\Re\left(\int_{\mathbb{R}^3}K(x)\phi_{|u^0|}u^0\overline{\varphi}\,dx\right)\\
        &-\Re\left(\int_{\mathbb{R}^3}|u^0|^{p-2}u^0\overline{\varphi}dx\right)=0,\quad\forall\varphi\in \HA,
	\end{align*}
	which implies that $E_A^\prime(u^0)=0$. So $u^0$ is a solution of \eqref{problem}.
	
	Set $u_n^1:=u_n-u^0$. Since $u_n\rightharpoonup u^0$, we have
	\begin{equation*}
		\begin{aligned}
			\I |\nabla_A u_n^1|^2dx
			&=	\I |\nabla_A u_n-\nabla_A u^0|^2dx\\
&=		\I \left(|\nabla_A u_n|^2+|\nabla_A u^0|^2-2\Re\left(\nabla_A u_n\cdot\overline{\nabla_A u^0}\right)	\right)dx	\\		
		&=\I |\nabla_A u_n|^2dx-\I |\nabla_A u^0|^2dx+o(1)
		\end{aligned}
	\end{equation*}
for some $o(1)$ going to 0 as $n\to \infty$. In addition,
	\begin{equation}\label{gradient1}
		\begin{aligned}
		\I |\nabla u_n^1|^2dx&=\I |-i\nabla u_n^1|^2dx\\
		&=\I |-i\nabla u_n^1+\left(A(x)-A(x)\right)u_n^1|^2dx\\
		&=\I |\nabla_A  u_n^1-A(x)u_n^1|^2dx.
		\end{aligned}
		\end{equation}
Now,
\[
\I |\nabla_Au_n^1-A(x)u_n^1|^2\,dx=\I\left[|\nabla_Au_n^1|^2+|A(x)u_n^1|^2-2\Re \left(\nabla_Au_n^1\cdot\overline{A(x)u_n^1}\right)\right]\,dx.
\]
Since $A(x)\to 0 $ as $|x|\to \infty$, fixed $\epsilon>0$ there exists $R_\epsilon>0$ such that $|A(x)|<\epsilon$ if $|x|>R_\epsilon$. On the other hand, by the Rellich-Kondrachov theorem we have that $u_n^1\to 0$ in $L^2(B_{R_\epsilon})$, so that for $n$ large enough
\begin{equation}\label{azero}
\I|A(x)u_n^1|^2dx=\int_{B_{R_\epsilon}}|A(x)u_n^1|^2dx+\int_{\R\setminus B_{R_\epsilon}}|A(x)u_n^1|^2dx\leq \epsilon|A|_\infty^2+\epsilon^2|u_n^1|_2
\leq M\epsilon
\end{equation}
for some $M>0$, since ${u_n^1}$ is bounded in $L^2(\R,\C)$. On the other hand,
\[
\left|\I\Re \left(\nabla_Au_n^1\cdot\overline{A(x)u_n^1}\right)dx\right|\leq |\nabla_Au_n^1|_2|A(x)u_n^1|_2 \to 0
\]
as $n\to \infty$ from the previous lines. As a consequence, from \eqref{gradient1} and the weak convergence of $u_n$ to $u^0$, we find	
\begin{equation}\label{gradient2}
		\begin{aligned}		
\I |\nabla u_n^1|^2dx&=\I |\nabla_A  u_n^1|^2dx+o(1)\\
		&=\I |\nabla_A u_n|^2dx-\I |\nabla_A u^0|^2dx+o(1),
		\end{aligned}
		\end{equation}
where $o(1)\to 0$ as $n\to \infty$.

Moreover, using again the fact that $u_n\rightharpoonup u^0$,
\begin{equation}\label{Vx}
		\begin{aligned}
			&\quad\I \left(V_\infty|u_n^1|^2-V(x)|u_n|^2+V(x)|u^0|^2\right)dx\\
			&=\I \left[V_\infty\left(|u_n|^2-2\Re (u_n\overline{u^0})+|u^0|^2\right)-V(x)|u_n|^2+V(x)|u^0|^2\right]dx\\
			&=\I \left(V_\infty-V(x)\right)\left(|u_n|^2-|u^0|^2\right)dx+o(1)\\
			&=\I \left(V_\infty-V(x)\right)\left[(\overline{u_n}-\overline{u^0})u_n+(u_n-u^0)\overline{u^0}\right]dx+o(1)\\
			&=\I \left(V_\infty-V(x)\right)(\overline{u_n}-\overline{u^0})u_n\,dx-o(1),
		\end{aligned}
	\end{equation}
where $o(1)\to 0$ as $n\to \infty$. Reasoning as for the proof of \eqref{azero}, recalling $(V_1)$, from \eqref{Vx} we find that
\[
\I \left(V_\infty|u_n^1|^2-V(x)|u_n|^2+V(x)|u^0|^2\right)dx=o(1).
\]

Besides, using Br\'ezis-Lieb lemma \cite{BrezisLieb1983}, one has
	\begin{equation}\label{BrezisLieb}
		\I \left(|u_n|^p-|u_n^1|^p-|u^0|^p\right)dx \rightarrow 0
	\end{equation}
as $n\to \infty$. Hence, from \eqref{gradient2}-\eqref{BrezisLieb} and Lemma \ref{propertyofphi} (b), we conclude that
	\begin{align*}
	E_0(u_n^1)-E_A(u_n)+E_A(u^0)
	&=\dfrac{1}{2}\I \left(|\nabla u_n^1|^2-|\nabla_A u_n|^2+|\nabla_A u^0|^2\right)dx\\
    &-\frac{1}{4}\int_{\mathbb{R}^3}K(x)\left(\phi_{|u_n|}|u_n|^2-\phi_{|u^0|}|u^0|^2\right)\,dx
    \\
	&\quad+\dfrac{1}{2}\I \left(V_\infty|u_n^1|^2-V(x)|u_n|^2+V(x)|u^0|^2\right)dx\\
	&\quad-\dfrac{1}{p}\I \left(|u_n^1|^p-|u_n|^p+|u^0|^p\right)dx\\
	&=o(1).
\end{align*}
Thus
\begin{equation}\label{convspos}
E_0(u_n^1)\rightarrow d-E_A(u^0) \mbox{ as }n\to \infty.
\end{equation}

By virtue of \cite[Lemma 8.1]{Minimax}\footnote{Actually we need a version suitable for this setting, but the related proof is essentially the same of  that one, considering the real part of a complex-valued function.}, for any $\varphi\in \H$ we have
	\begin{equation}\label{Minimax}
	\Re	\left(\I |u_n|^{p-2}u_n\overline{\varphi}dx-\I|u_n^1|^{p-2}u_n^1\overline{\varphi}dx -\I |u^0|^{p-2}u^0\overline{\varphi}dx\right)=o(1)\Vert\varphi\Vert.
	\end{equation}
    Since $u_n\rightharpoonup u^0$ in $\HA$, reasoning as for the proof of \eqref{azero}, we have
    \begin{equation*}
    \Re\left(\I K(x)\phi_{|u_n|}\overline{\varphi}\left(u_n-u^0\right)dx\right)=o(1)\Vert\varphi\Vert.
    \end{equation*}
    Besides, \eqref{K(x)} yields that
    \begin{equation*}
      \Re\left(\int_{\mathbb{R}^3}K(x)\left(\phi_{|u_n|}-\phi_{|u|}\right)u^0\overline{\varphi}\,dx\right)=o(1)\Vert\varphi\Vert.  
    \end{equation*}
    As a result,
    \begin{equation}\label{K(x)un}
    \begin{aligned}
&\Re\left(\int_{\mathbb{R}^3}K(x)\left(\phi_{|u_n|}u_n\overline{\varphi}-\phi_{|u^0|}u^0\overline{\varphi}\right)\,dx\right)\\
=&\Re\left(\I K(x)\phi_{|u_n|}\overline{\varphi}\left(u_n-u^0\right)dx\right)+\Re\left(\I K(x)u^0\overline{\varphi}\left(\phi_{|u_n|}-\phi_{|u^0|}\right)\,dx\right)\\
=&o(1)\Vert\varphi\Vert.
    \end{aligned}
    \end{equation}

Let us consider the operator $\mathcal{J}:H^1(\R,\mathbb C)\to L^2(\R,\mathbb C^3)$
\begin{equation*}
    \mathcal{J}(\varphi):=A(x)\varphi .
\end{equation*}
By $(A_1)$, one can easily derive that the operator $\mathcal{J}$ is compact, reasoning as in \eqref{azero}. Hence its Hilbert adjoint
\begin{equation*}
    \mathcal{J}^*:L^2(\R,\mathbb C^3)\to H^{-1}(\R,\mathbb C)
\end{equation*}
is also compact. Since $u_n^1\rightharpoonup0$ in $H^1(\R,\mathbb C)$, we have
\begin{equation*}
    -i\nabla u_n^1\rightharpoonup0
\quad\text{in }L^2(\R,\mathbb C^3).
\end{equation*}
Therefore,
\begin{equation*}
    \mathcal{J}^*(-i\nabla u_n^1)\to0
\quad\text{in }H^{-1}(\R,\mathbb C).
\end{equation*}
Consequently,
\begin{equation*}
    \begin{aligned}
\operatorname{Re}\left(\int_{\mathbb R^3}
(-i\nabla u_n^1)\cdot A(x)\overline{\varphi}\,dx\right)
&=\operatorname{Re}\left\langle-i\nabla u_n^1,\mathcal{J}({\varphi})\right\rangle_{L^2(\R,\mathbb{C}^3)}
\\&=\operatorname{Re}
\left\langle \mathcal{J}^*(-i\nabla u_n^1),{\varphi}\right\rangle_{H^{-1}(\R,\mathbb{C}),H^1(\R,\mathbb{C})}\\
&\leq
\Vert\mathcal{J}^*(-i\nabla u_n^1)\Vert_{H^{-1}(\R,\mathbb{C})} \Vert\varphi\Vert=
o(1)\Vert\varphi\Vert.
\end{aligned}
\end{equation*}
Now, reasoning as for \eqref{azero}, by $(A_1)$ and $(V_1)$ we have that 
$$\Re\left(i\I A(x)u_n^1\cdot \overline{\nabla \varphi}dx\right)=o(1)\|\varphi\|,\quad \Re\left(\I |A(x)|^2
   u_n^1 \overline{ \varphi}\,dx\right)=o(1)\|\varphi\|$$
and
\begin{equation}\label{V(x)un}
    \Re\left(		
			\I \left(V(x)-V_\infty\right)u_n^1\overline{\varphi}\,dx\right)=o(1)\|\varphi\|.
\end{equation}
 From above,
\begin{equation}\label{A(x)un}
\begin{aligned}
   &\quad\Re\left( \I \nabla_A u_n^1\cdot \overline{\nabla_A \varphi}dx-\I \nabla u_n^1\cdot \overline{\nabla \varphi}dx\right)\\
   &= \Re\left(\I (-i\nabla u_n^1)\cdot A(x)\overline{ \varphi}\,dx+\I |A(x)|^2
   u_n^1 \overline{ \varphi}\,dx+i\I A(x)u_n^1\cdot \overline{\nabla \varphi}dx\right)\\
   &=o(1)\|\varphi\|.
\end{aligned}
\end{equation}
Note that 
if $\varphi\in H^1(\R,\mathbb{C})$, then by $(A_1)$  it is easy to check that $\varphi\in H_A^1(\R,\mathbb{C})$.
	From \eqref{Minimax}--\eqref{A(x)un}, we have
	\begin{equation*}
	\begin{aligned}
			E_A^\prime(u_n)\varphi
			&=	\Re\left(\int_{\mathbb{R}^3}(\nabla_Au_n\cdot\overline{\nabla_A\varphi}+V(x)u_n\overline{\varphi})dx\right)+\Re\left(\int_{\mathbb{R}^3}K(x)\phi_{|u_n|}u_n\overline{\varphi}\,dx\right)\\
            &\quad-\Re\left(\int_{\mathbb{R}^3}|u_n|^{p-2}u_n\overline{\varphi}dx\right)\\
            &\mbox{ (since $u_n=u_n^1+u^0$ we find)}\\
			&=E_A^\prime(u^0)\varphi+\Re\left(\I (\nabla_Au_n^1\cdot \overline{\nabla_A\varphi}+ V(x)u_n^1\overline{\varphi})dx\right)\\
            &\quad+\Re\left(\int_{\mathbb{R}^3}K(x)\left(\phi_{|u_n|}u_n\overline{\varphi}-\phi_{|u^0|}u^0\overline{\varphi}\right)\,dx\right)
            \\
			&\quad-\Re\left(\I|u_n|^{p-2}u_n\overline{\varphi}dx-\I|u^0|^{p-2}u^0\overline{\varphi}dx\right)\\
			 &\mbox{ (since $E'_A(u^0)=0$, adding and subtracting $E'_0(u_n^1)\varphi$, we get)}\\
			&=E_0^\prime(u_n^1)\varphi+\Re\left(\I (\nabla_Au_n^1\cdot \overline{\nabla_A\varphi}-\nabla u_n^1\cdot \overline{\nabla\varphi} )dx\right)+\Re\left(		
			\I \left(V(x)-V_\infty\right)u_n^1\overline{\varphi}\,dx\right)\\
            &\quad+\Re\left(\int_{\mathbb{R}^3}K(x)\left(\phi_{|u_n|}u_n\overline{\varphi}-\phi_{|u^0|}u^0\overline{\varphi}\right)\,dx\right)\\
			&\quad-\Re	\left(\I |u_n|^{p-2}u_n\overline{\varphi}dx-\I|u_n^1|^{p-2}u_n^1\overline{\varphi}dx -\I |u^0|^{p-2}u^0\overline{\varphi}dx\right)\\
&=E_0^\prime(u_n^1)\varphi+o(1)\|\varphi\|.
		\end{aligned}
	\end{equation*}
Then one has that
\[
|E_0^\prime(u_n^1)\varphi|\leq \|E_A^\prime(u_n)\|\|\varphi\|+o(1)\|\varphi\|=o(1)\|\varphi\|.
\]
Thus, we deduce that $E_0^\prime(u_n^1)\rightarrow0$. Moreover, from \eqref{convspos}, we get that  $\{u_n^1\}$ is a $(PS)$ sequence of $E_0$ at the level $d-E_A(u^0)$.
	
		Now we consider
	\begin{equation*}
		\delta_1:=\limsup_{n\to\infty}\sup_{y\in\R}\int_{B_1(y)}| u_n^1|^2dx.
	\end{equation*}
	Due to the diamagnetic inequality \eqref{diamagnetic}, $\{|u_n^1|\}$ is bounded in $\HR$. 
	If $\delta_1=0$, we know from Lions' concentration-compactness lemma \cite[Lemma I.1]{Lions2} that 
		\begin{equation}\label{lions}
		|u_n^1|\rightarrow 0~{\rm in}~L^q(\R,\mathbb{R}),\quad
        2<q<6.
	\end{equation}
Again as for \eqref{azero}, we have that 
	\begin{equation}\label{serve}
		\I |A(x)u_n^1|^2dx\rightarrow0\quad\mathrm{and}\quad 	\I (V_\infty-V(x))|u_n^1|^2dx\rightarrow0.
	\end{equation}
	By using $E_0^\prime(u_n^1)\rightarrow 0$, we have
\begin{equation}\label{sopra}
\begin{aligned}
	o(1)=	E_0^\prime(u_n^1)u_n^1&=\I \left(|\nabla u_n^1|^2+V_\infty |u_n^1|^2\right)dx-\I |u_n^1|^pdx\\
	&=\I \left(|-i\nabla u_n^1|^2+V(x) |u_n^1|^2\right)dx+\I (V_\infty-V(x))|u_n^1|^2dx-\I |u_n^1|^pdx\\
		&=\I \left(|-i\nabla u_n^1+(A(x)-A(x))u_n^1|^2+V(x)|u_n^1|^2\right)dx\\
		&\quad
		+\I (V_\infty-V(x))|u_n^1|^2dx-\I |u_n^1|^pdx\\
			&=\I \left(|\nabla_A u_n^1-A(x)u_n^1|^2+V(x)|u_n^1|^2\right)dx\\
			&\quad+\I (V_\infty-V(x))|u_n^1|^2dx-\I |u_n^1|^pdx\\
	&= \I |\nabla_Au_n^1|^2dx+\I |A(x)u_n^1|^2dx-2\Re \left(\I \nabla_Au_n^1\cdot \overline{A(x)u_n^1}dx  \right)\\
	&\quad+\I (V_\infty-V(x))|u_n^1|^2dx-\I |u_n^1|^pdx
	\end{aligned}
\end{equation}
Noting that
\[
\left|\I \nabla_Au_n^1\cdot \overline{A(x)u_n^1}dx \right|\leq |\nabla_Au_n^1|_2|A(x)u_n^1|_2,
\]
and recalling that $A$ is bounded, from \eqref{lions}, \eqref{serve} and \eqref{sopra} we find that
\[
E_0^\prime(u_n^1)u_n^1=\Vert u_n^1\Vert_A^2-\I |u_n^1|^pdx+o(1),
\]
for some $o(1)\to 0$, which implies that $u_n^1\rightarrow0$ in $\HA$. Thus, $u_n\to u^0$ in $\HA$ and the proof is complete.

{\sl Viceversa}, if $\delta_1>0$, there exists a sequence $\{y_n^1\}\subset\R$ such that
	\begin{equation}\label{yn1}
		\int_{B_1(y_n^1)}|u_n^1|^2dx>\dfrac{\delta_1}{2} \mbox{ for every }n\in \N.
	\end{equation}
Now set $v_n^1:=u_n^1(\cdot+y_n^1) $. Since $\{u_n^1\} $ is bounded in $\H$, then $ \{v_n^1\}$ is also bounded in $\H$. Hence, there exists $u^1\in \H$ such that 
	\begin{equation*}
		\left\{
		\begin{aligned}
			&v_n^1\rightharpoonup u^1~{\rm in}~\H,\\~
			&v_n^1\rightarrow u^1~{\rm in}~L^q_{loc}(\R,\mathbb{C}),\\
			&v_n^1\rightarrow u^1~{\rm a.e.~on}~\R,
		\end{aligned}
		\right.
	\end{equation*}
for any $ q\in [1,6)$.
	From (\ref{yn1}), it follows that
	\begin{equation*}
		\int_{B_1(0)}|v_n^1|^2dx>\dfrac{\delta_1}{2}
	\end{equation*}
and by  the strong convergence in $B_1(0)$, we get
	\begin{equation}\label{nontriv}
		\int_{B_1(0)}|u^1|^2dx\geq\dfrac{\delta_1}{2},
	\end{equation}
and so $u^1\neq0$.
	 Since $u_n^1\rightharpoonup0$, we deduce that $\{y_n^1\}$ is unbounded and, without loss of generality, we can assume that $|y_n^1|\rightarrow\infty$.
	 
Now, take $\varphi\in C^\infty_c(\R,\C)$. Then
\[
E_0'(v_n^1)\varphi=E_0^\prime(u_n^1(\cdot+y_n^1))\varphi=E_0^\prime(u_n^1)\varphi(\cdot-y_n^1)=o(1),
\]
since $E_0'(u_n^1)\to 0$. But using the weak convergence in $\H$ and the strong convergence in the support of $\varphi$, we find
\[
\begin{aligned}
o(1)=E_0'(v_n^1)\varphi&=\int_{\R}\nabla v_n^1\cdot \nabla \varphi\,dx+\int_{\R}V_\infty v_n^1\varphi\,dx-\int_{\R}|v_n^1|^{p-2}v_n^1\varphi\,dx\\
&\to \int_{\R}\nabla u^1\cdot \nabla \varphi\,dx+\int_{\R}V_\infty u^1\varphi\,dx-\int_{\R}|u^1|^{p-2}u^1\varphi\,dx=E_0'(u^1)\varphi,
\end{aligned}
\]
that is $E_0^\prime(u^1)\varphi=0$ for every $\varphi\in C^\infty_c(\R,\C)$. By density, we finally get that
 $E_0^\prime(u^1)=0$ and so,  by \eqref{nontriv}, $u^1$ is a nontrivial solution of \eqref{limitproblem}.

Now, set $u_n^2:= u_n^1-u^1(\cdot-y_n^1)=u_n-u^0-u^1(\cdot-y_n^1)$.
	Using similar arguments, we can show that $\{u_n^2\}$ is a $(PS)$ sequence of $E_A$ at the level $d-E_A(u^0)-E_0(u^1)$.
	 
	 Define
	 \begin{equation*}
	 	\delta_2:=\limsup_{n\to\infty}\sup_{y\in\R}\int_{B_1(y)}| u_n^2|^2dx.
	 \end{equation*}
	Similarly to what we proved before, if $\delta_2=0$, then we have that $u_n^2\rightarrow0$ in $\H$. Hence, $u_n\to u^0+u^1(\cdot-y_n^1)$ in $\H$. Besides, one can easily verify that $E_0(u_n^1(\cdot+y_n^1))=E_0(u_n^1)$, so $E_0(u_n^1)\rightarrow E_0(u^1)=d-E_A(u^0)$. As a result,
	\begin{equation*}
		d=E_A(u^0)+E_0(u^1).
	\end{equation*}
Then the proof is completed. 

If $\delta_2>0$, we proceed as before, obtaining  a nontrivial solution $u^2$ of \eqref{limitproblem} and a sequence $\{y_n^2\}\subset\R$ satisfying $|y_n^2|\rightarrow\infty$ and $|y_n^2-y_n^1|\rightarrow\infty$.
	
We claim that the above procedure can stop only after finitely many steps. Indeed, if $u$ is a nontrivial solution of  \eqref{limitproblem}, then
\begin{equation*}
	E_0(u)\geq m_0>0.
\end{equation*}
Since $d$ is a finite number, the procedure must terminate after a finite number of steps. Suppose that the procedure performs $k+1$ times, where $k\in\mathbb{N}^+$, that is
\begin{equation*}
	\delta_{k+1}:=\limsup_{n\to\infty}\sup_{y\in\R}\int_{B_1(y)}| u_n^{k+1}|^2dx=0.
\end{equation*}
Then
	\begin{equation*}
	u_n-\sum_{j=1}^{k}u^j(\cdot-y_n^j)\rightarrow u^0~ {\rm in}~\H
\end{equation*}
and
\begin{equation*}
	d=	E_A(u^0)+\sum_{j=1}^{k}E_0(u^j).
\end{equation*}
This completes the proof.
\end{proof}
	
\begin{corollary}\label{cpt1}
	The functional $E_A$ satisfies the $(PS)_d$ condition for every  $ d\in(0,m_0)$.
\end{corollary}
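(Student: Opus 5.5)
Let $d\in(0,m_0)$ and let $\{u_n\}\subset\HA$ be a $(PS)_d$ sequence of $E_A$. The plan is to apply the global compactness result, Lemma \ref{splitting}, to $\{u_n\}$ and to show that alternative (b) cannot occur when $d<m_0$. Then alternative (a) holds, which gives a strongly convergent subsequence.

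Suppose, for contradiction, that (b) holds. Then there are $k\ge1$ nontrivial solutions $u^1,\dots,u^k$ of \eqref{limitproblem} and a solution $u^0$ of \eqref{problem} with
\[
d=E_A(u^0)+\sum_{j=1}^k E_0(u^j).
\]
Each $u^j$ is a nontrivial critical point of $E_0$, so it lies on $\mathcal N_0$, and hence $E_0(u^j)\ge m_0$.

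The remaining point is the sign of $E_A(u^0)$. If $u^0=0$, then $E_A(u^0)=0$. If $u^0\neq0$, then $u^0\in\mathcal N_A$, and the identity following \eqref{nehari} gives
\[
E_A(u^0)=\left(\tfrac12-\tfrac1p\right)\|u^0\|_A^2+\left(\tfrac14-\tfrac1p\right)\int_{\R}K(x)\phi_{|u^0|}|u^0|^2\,dx>0.
\]
This uses $p>4$, $K\ge0$ and $\phi_{|u^0|}\ge0$, the last following from the representation formula. In either case $E_A(u^0)\ge0$.

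Therefore $d\ge k\,m_0\ge m_0$, which contradicts $d<m_0$. So case (a) of Lemma \ref{splitting} holds: up to a subsequence, $u_n\to u^0$ in $\HA$, and this is exactly the $(PS)_d$ condition. The argument has no real obstacle; the only point needing care is the nonnegativity of $E_A(u^0)$, which is where $p>4$ is used.
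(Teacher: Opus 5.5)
Your proof is correct and follows the paper's argument: you apply Lemma \ref{splitting} and rule out alternative (b) via $d=E_A(u^0)+\sum_{j=1}^k E_0(u^j)\geq k m_0\geq m_0$. You also justify $E_A(u^0)\geq 0$, which the paper's proof of this corollary uses without comment and only makes explicit later, in Lemma \ref{compactness}(b).
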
	
\begin{proof}
	Let $\{u_n\}$ be a $(PS)_d$ sequence with $ d\in(0,m_0)$.
	By virtue of Lemma \ref{PSbounded}, $\{u_n\}$ is bounded in $\HA$. Hence, there exists $u^0\in\HA$ such that 
	\begin{equation*}
		u_n\rightharpoonup u^0 \quad\mathrm{in}~\HA.
	\end{equation*}
	Suppose that $u_n\not\rightarrow u^0$ in $\HA$. From Lemma \ref{splitting}, it follows that there exists $k\in\mathbb{N}^+$ and $k$ nontrivial solutions $u^1,u^2,\cdots,u^k\in\H$ of \eqref{limitproblem}
	such that	
	\begin{equation*}
		d=	E_A(u^0)+\sum_{j=1}^{k}E_0(u^j)\geq km_0,
	\end{equation*}
	which is a contradiction with $d\in(0,m_0)$. Thus, $u_n\rightarrow u^0$ in $\HA$.
\end{proof}	

Recall the definition
\begin{equation*}
\hat{m}=\min\{m_0,\xi\}
\end{equation*} 

\begin{corollary}\label{cpt2}
		If $m_A=m_0$, then the functional
		$E_A$ satisfies the $(PS)_d$ condition for every $d\in(m_0,m_0+\hat{m})$.
\end{corollary}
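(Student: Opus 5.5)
The plan is to argue by contradiction with the global compactness result, Lemma \ref{splitting}, and to exclude every possible splitting by an energy count. The count uses two facts: $m_A=m_0$, and the isolation of the level $m_0$ from Lemma \ref{isolatedlevel}.

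Let $\{u_n\}$ be a $(PS)_d$ sequence of $E_A$ with $d\in(m_0,m_0+\hat m)$; in particular $d>0$. By Lemma \ref{PSbounded} it is bounded, and Lemma \ref{splitting} gives a solution $u^0$ of \eqref{problem}. Suppose, for contradiction, that alternative (a) fails. Then alternative (b) holds: there are $k\ge1$ nontrivial solutions $u^1,\dots,u^k$ of \eqref{limitproblem} with
\[
d=E_A(u^0)+\sum_{j=1}^k E_0(u^j).
\]
Each nontrivial critical point of $E_0$ lies on $\mathcal N_0$, so $E_0(u^j)\ge m_0$ for every $j$.

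I then split according to whether $u^0$ vanishes. Since $\hat m\le m_0$ by \eqref{mhat}, we have $2m_0\ge m_0+\hat m>d$; this inequality is used in both cases.
\begin{itemize}
\item If $u^0\neq0$, then $u^0\in\mathcal N_A$, so $E_A(u^0)\ge m_A=m_0$. Hence $d\ge (k+1)m_0\ge 2m_0$, which contradicts $d<2m_0$.
\item If $u^0=0$, then $E_A(u^0)=0$ and $d=\sum_{j=1}^k E_0(u^j)\ge km_0$.
\begin{itemize}
\item If $k\ge2$, then $d\ge 2m_0$, again a contradiction.
\item If $k=1$, then $d=E_0(u^1)$ with $m_0<E_0(u^1)<m_0+\hat m\le m_0+\xi$. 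By Lemma \ref{isolatedlevel}, $u^1=e^{i\theta}\omega(\cdot-a)$ for some $a\in\mathbb R^3$ and $\theta\in\mathbb R$. By translation and phase invariance of $E_0$ and Proposition \ref{thesamelevel}, this gives $E_0(u^1)=E_0(\omega)=m_0$, contradicting $d>m_0$.
\end{itemize}
\end{itemize}
Since every case is contradictory, alternative (a) of Lemma \ref{splitting} must hold, so $u_n\to u^0$ in $\HA$ along a subsequence. This proves the $(PS)_d$ condition.

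The only delicate point is the single-bubble case $u^0=0$, $k=1$. Energy counting alone cannot exclude it, because a bubble with energy strictly between $m_0$ and $m_0+\xi$ is a priori possible. This is exactly why the isolation result of \cite{BNV2019}, encoded in $\xi$ through $\hat m$, is needed there.
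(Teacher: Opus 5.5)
Your proposal is correct and follows the paper's proof essentially step for step. Both argue by contradiction via Lemma~\ref{splitting}: the case $u^0\neq0$ is excluded because $d\ge 2m_0$, the case $k\ge2$ is excluded the same way, and the single-bubble case is excluded by Lemma~\ref{isolatedlevel}. You also spell out two facts the paper uses tacitly, namely $E_A(u^0)\ge m_A=m_0$ when $u^0\neq0$, and $m_0+\hat m\le 2m_0$.
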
		
\begin{proof}
		Let $\{u_n\}$ be a $(PS)_d$ sequence. Lemma \ref{PSbounded} yields that $\{u_n\}$ is bounded in $\HA$.
	Arguing by contradiction, suppose that (up to a subsequence) $u_n\rightharpoonup u^0$ in $\HA$ but $u_n\not\rightarrow u^0$ in $\HA$. We claim that $u^0=0$. If not, by Lemma \ref{splitting},  there exist $k\in\mathbb{N}^+$ and $k$ nontrivial solutions $u^1,u^2,\cdots,u^k\in\H$ of \eqref{limitproblem}
	such that	
	\begin{equation*}
		d=	E_A(u^0)+\sum_{j=1}^{k}E_0(u^j)\geq (k+1)m_0\geq2m_0,
	\end{equation*}
    which is impossible. Hence,
    \begin{equation*}
        d=\sum_{j=1}^{k}E_0(u^j)\geq km_0,
    \end{equation*}
	which implies that $k=1$. Then
	\begin{equation*}
			d=E_0(u^1)<m_0+\hat{m}<m_0+\xi.
	\end{equation*}
	By Lemma \ref{isolatedlevel}, we obtain $d=E_0(u^1)=m_0$. This is a contradiction. Hence, we conclude that  $u_n\rightarrow u^0$ in $\HA$.
\end{proof}

\section{Existence of a bound state solution}

This section is devoted to proving Theorem \ref{highenergy}. We begin by studying the relationship between the least energy $m_A$ and $m_0$.

\begin{lemma}\label{le:0<mAleqm0}
    Assume that $(A_1), (V_1), (K_1)$ hold. Then
    \begin{align*}
        0<m_A\leq m_0.
    \end{align*}
\end{lemma}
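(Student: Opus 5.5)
Positivity and the upper bound are handled separately.

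\emph{Positivity.} For $u\in\mathcal{N}_A$ we already have
\[
E_A(u)=\left(\tfrac12-\tfrac1p\right)\|u\|_A^2+\left(\tfrac14-\tfrac1p\right)\int_{\R}K(x)\phi_{|u|}|u|^2dx\geq\left(\tfrac12-\tfrac1p\right)\|u\|_A^2 .
\]
Lemma \ref{properties}(a) gives $\|u\|_A>\rho$ on $\mathcal{N}_A$. Hence $m_A\geq(\frac12-\frac1p)\rho^2>0$.

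\emph{Upper bound.} We test with translates of the real ground state $\omega$ of \eqref{limitproblemR}. Fix a unit vector $e\in\R$ and set $\omega_n:=\omega(\cdot-ne)$. By Lemma \ref{properties}(d) there is a unique $t_n>0$ with $t_n\omega_n\in\mathcal{N}_A$, so $m_A\leq E_A(t_n\omega_n)$. It therefore suffices to show $\limsup_n E_A(t_n\omega_n)\leq m_0$. We will prove the following three limits:
\[
\|\omega_n\|_A^2\to\|\omega\|_\bullet^2,\qquad \int_{\R}K\phi_{|\omega_n|}\omega_n^2dx\to0,\qquad |\omega_n|_p=|\omega|_p .
\]

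For the first limit, $\omega$ is real, so
\[
|\nabla_A\omega_n|^2=|\nabla\omega_n|^2+|A|^2\omega_n^2 .
\]
The cross term $2\Re(-i\nabla\omega_n\cdot A\omega_n)$ vanishes because it is purely imaginary. Hence
\[
\|\omega_n\|_A^2=\int_{\R}|\nabla\omega|^2dx+\int_{\R}\big(|A(x+ne)|^2+V(x+ne)\big)\omega^2dx .
\]
By $(A_1)$, $(V_1)$ and dominated convergence, $|A(x+ne)|^2+V(x+ne)\to V_\infty$ pointwise with a uniform $L^\infty$ bound. This gives $\|\omega_n\|_A^2\to\|\omega\|_\bullet^2$.

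For the second limit, use \eqref{normphi} and \eqref{Lu}:
\[
\int_{\R}K\phi_{|\omega_n|}\omega_n^2dx=\|\phi_{|\omega_n|}\|_{\mathcal{D}^{1,2}}^2\leq S^{-3}\Big(\int_{\R} K^2\omega_n^4\Big)\|\omega_n\|_{\mathcal{D}^{1,2}}^2 .
\]
To justify this, apply H\"older in the form $|\mathcal{L}_{|\omega_n|}(v)|\leq |K\omega_n^2|_{6/5}|v|_6$. Then bound
\[
|K\omega_n^2|_{6/5}\leq \Big(\int_{\R} K^2\omega_n^4\Big)^{1/4}|K|_2^{1/2}|\omega_n|_6 .
\]
Alternatively, the cruder bound $|K\omega_n^2|_{6/5}\leq |K|_{L^2(B_R^c)}|\omega|_6^2+|K|_2|\omega|_{L^6(B_R(-ne))}^2$ suffices. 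The first term is small for $R$ large by $K\in L^2$, and the second tends to $0$ as $n\to\infty$ for fixed $R$. So $\|\mathcal{L}_{|\omega_n|}\|_{(\mathcal D^{1,2})^*}\to0$ and hence $\phi_{|\omega_n|}\to0$. This splitting, mirroring Lemma \ref{propertyofphi}(a), is the only mildly delicate step. The third identity, $|\omega_n|_p=|\omega|_p$, is immediate from translation invariance.

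\emph{Conclusion.} Recall that $t_n$ solves
\[
\|\omega_n\|_A^2+t_n^2\int_{\R}K\phi_{|\omega_n|}\omega_n^2dx=t_n^{p-2}|\omega|_p^p .
\]
By the limits above, $t_n\to t_\infty$, where $\|\omega\|_\bullet^2=t_\infty^{p-2}|\omega|_p^p$. Since $\omega\in\mathcal{N}_0$, this forces $t_\infty=1$; in particular $t_n$ stays bounded. Hence
\[
E_A(t_n\omega_n)\to \tfrac12\|\omega\|_\bullet^2-\tfrac1p|\omega|_p^p=E_0(\omega)=m_0 ,
\]
using Proposition \ref{thesamelevel}. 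Therefore $m_A\leq m_0$.
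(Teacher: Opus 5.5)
Your proof is correct and follows the paper's route. For positivity you use the Nehari identity together with Lemma \ref{properties}(a); for $m_A\le m_0$ you use the projected translates $t_n\omega(\cdot-y_n)\in\mathcal{N}_A$ with $t_n\to1$ and a vanishing Poisson term, and your ball-splitting estimate is just a hands-on version of the paper's appeal to Lemma \ref{propertyofphi}(b) via $\omega_{y_n}\rightharpoonup0$.

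Two small repairs are needed. First, delete your first displayed bound on the Poisson term together with the H\"older estimate offered to justify it. The exponents $4,4,6$ give $\frac14+\frac14+\frac16=\frac23\neq\frac56$, so that product controls $|K\omega_n^2|_{3/2}$, not $|K\omega_n^2|_{6/5}$. Keep only your splitting bound, which is correct and suffices. Second, establish boundedness of $t_n$ before passing to the limit instead of deducing it afterwards. This follows, for example, from $t_n^{p-4}|\omega|_p^p=t_n^{-2}\|\omega_n\|_A^2+\int_{\R}K\phi_{|\omega_n|}\omega_n^2\,dx$ and $p>4$.
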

\begin{proof}
    Let $u\in\mathcal{N}_A$. From  Lemma \ref{properties} (a), it follows that
	\begin{align*}
		E_A(u)&=\dfrac{1}{2}\Vert u\Vert_A^2+\frac{1}{4}\int_{\mathbb{R}^3}K(x)\phi_{|u|}|u|^2\,dx-\dfrac{1}{p}\I|u|^p\,dx\\
        &=\frac{1}{4}\Vert u\Vert_A^2+\left(\frac{1}{4}-\frac{1}{p}\right)\int_{\mathbb{R}^3}|u|^p\,dx\\
		&\geq\frac{1}{4}\Vert u\Vert_A^2
		\geq\frac{1}{4}\rho^2>0,
	\end{align*}
	which implies that $m_A>0$. Next we prove $m_A\leq m_0$. It suffices to construct a sequence $\{u_n\}\subset\mathcal{N}_A$ such that $E_A(u_n)\to m_0$. Take a sequence $\{y_n\}$ with $|y_n|\to\infty$ as $n\to\infty$ and set $u_n:=t_n\omega_{y_n}=t_n\omega(\cdot-y_n)$, where $t_n$ is such that $t_n\omega_{y_n}\in\mathcal{N}_A$. Then we have
    \begin{equation}\label{11}
    \begin{aligned}
         E_A(u_n)&=\frac{t_n^2}{2}\Vert\omega_{y_n}\Vert_A^2+\frac{t_n^4}{4}\int_{\mathbb{R}^3}K(x)\phi_{|\omega_{y_n}|}|\omega_{y_n}|^2\,dx-\frac{t_n^p}{p}\int_{\mathbb{R}^3}|\omega_{y_n}|^p\,dx\\
         &=\frac{t_n^2}{2}\int_{\mathbb{R}^3}\left(|\nabla\omega|^2+\left(|A(x+y_n)|^2+V(x+y_n)\right)||\omega|^2\right)\,dx\\
         &\quad+\frac{t_n^4}{4}\int_{\mathbb{R}^3}K(x)\phi_{|\omega_{y_n}|}|\omega_{y_n}|^2\,dx-\frac{t_n^p}{p}\int_{\mathbb{R}^3}|\omega|^p\,dx.
    \end{aligned}
    \end{equation}
    Since $u_n\in\mathcal{N}_A$, one has
    \begin{equation}\label{22}
         \begin{aligned}
        0=E_A'(u_n)u_n&=t_n^2\left(\int_{\mathbb{R}^3}\left(|\nabla\omega|^2+\left(|A(x+y_n)|^2+V(x+y_n)\right)|\omega|^2\right)\,dx\right)\\
        &\quad +t_n^4\int_{\mathbb{R}^3}K(x)\phi_{|\omega_{y_n}|}|\omega_{y_n}|^2\,dx-t_n^p\int_{\mathbb{R}^3}|\omega|^p\,dx.
    \end{aligned}
    \end{equation}
     By $(A), (V)$ and the dominated convergence theorem,  we obtain
     \begin{align*}
         \int_{\mathbb{R}^3}\left(|A(x+y_n)|^2+V(x+y_n)-V_\infty\right)|\omega|^2\,dx\to0.
     \end{align*}
     Besides, since $\omega_{y_n}\rightharpoonup0$ in $\HA$, Lemma \ref{propertyofphi} (b) implies that
     \begin{align*}
         \int_{\mathbb{R}^3}K(x)\phi_{|\omega_{y_n}|}|\omega_{y_n}|^2\,dx\to0.
     \end{align*}
Then we have 
\begin{equation*}
    \begin{aligned}       \|\omega_{y_n}\|_A^2&=\int_{\R}\left(|\nabla\omega_{y_n}|^2+\left(|A(x)|^2+V(x)-V_\infty\right)|\omega_n|^2\right)dx+\int_{\R}V_\infty|w_{y_n}|^2dx\\
    &=\int_{\R}\left(|\nabla\omega|^2+\left(|A(x+y_n)|^2+V(x+y_n)-V_\infty\right)|\omega|^2\right)dx+\int_{\R}V_\infty|w|^2dx\\
    &=\int_{\R}\left(|\nabla\omega|^2+V_\infty|\omega|^2\right)dx+o(1).
    \end{aligned}
\end{equation*}
From \eqref{22}, it follows that
     \begin{equation*}
\int_{\R}\left(|\nabla\omega|^2+V_\infty|\omega|^2\right)dx+   t_n^2\int_{\mathbb{R}^3}K(x)\phi_{|\omega_{y_n}|}|\omega_{y_n}|^2dx = t_n^{p-2}   \int_{\mathbb{R}^3}|\omega|^pdx+o(1).
     \end{equation*}
Recalling that $\omega$ satisfies
\begin{equation*}
    \int_{\R}\left(|\nabla\omega|^2+V_\infty|\omega|^2\right)dx=\int_{\mathbb{R}^3}|\omega|^pdx,
\end{equation*}
 one has
\begin{equation*}
    \left(1-t_n^{p-2}\right)\int_{\R}\left(|\nabla\omega|^2+V_\infty|\omega|^2\right)dx+t_n^2\int_{\mathbb{R}^3}K(x)\phi_{|\omega_{y_n}|}|\omega_{y_n}|^2dx=o(1),
\end{equation*}
     which implies that $t_n\to1$ as $n\to\infty$.
Hence, as $n\to\infty$,
     \begin{align*}
         E_A(u_n)&=\frac{t_n^2}{2}\int_{\mathbb{R}^3}\left(|\nabla\omega|^2+\left(|A(x+y_n)|^2+V(x+y_n)\right)|\omega|^2\right)\,dx\\
         &\quad+\frac{t_n^4}{4}\int_{\mathbb{R}^3}K(x)\phi_{|\omega_{y_n}|}|\omega_{y_n}|^2\,dx-\frac{t_n^p}{p}\int_{\mathbb{R}^3}|\omega|^p\,dx\\
         &\to \frac{1}{2}\int_{\mathbb{R}^3}\left(|\nabla\omega|^2+V_\infty|\omega|^2\right)\,dx-\frac{1}{p}\int_{\mathbb{R}^3}|\omega|^p\,dx
         =E_0(\omega)=m_0.
     \end{align*}
     This completes the proof.
\end{proof}

	We recall the definition of barycenter $\beta$ of a function $u\in \HA\setminus \{0\}$ from \cite{ACR2008,CP2003}. Set
	\begin{equation*}
		\mu(u)(x):=\dfrac{1}{|B_1(x)|}\int_{B_1(x)}|u(y)|dy.
	\end{equation*}
	It is easy to check that $ \mu(u)\in L^\infty(\R)$ and that $\mu(u)(x)\to 0$ as $|x|\to \infty$. Moreover, $\mu(u)$ is continuous in $\R$, so that there exists $\max_{\R}\mu(u)$. Besides, the map  $\HA\ni u\mapsto \mu(u)\in L^\infty(\R)$ is evidently continuous. Next, we set
	\begin{equation*}
		\hat{u}(x):=\left[\mu(u)(x)-\dfrac{1}{2}\max_{\R}\mu(u)\right]^+.
	\end{equation*} 
	It is clear that  $\hat{u}$ is a nonnegative function and  has compact support.
This fact permits to define the barycenter function $\beta$ for $u$ by setting
	\begin{equation*}
		\beta(u):=\dfrac{1}{|\hat{u}|_1}\I x\hat{u}(x)dx\in \R.
	\end{equation*}
Since $\mu$ depends continuously on $u$,  $\beta$ is a continuous function in $ \HA\backslash\{0\}$. Moreover, we recall the following facts (see \cite{ACR2008}).
	
	\begin{lemma}\label{barycenterproperty}
		$\beta$ satisfies the following properties:
		\begin{enumerate}[{\rm (a)}]
			\item  If $ u $ is a radial function, then $ \beta(u)=0 $;
			\item  For any $t\neq0$, $u\in \HA\backslash\{0\} $, $\beta(tu)=\beta(u)$;
			\item For any $y\in\mathbb{R}^3$ and $u_y=u(\cdot-y)$, $ \beta(u_y)=\beta(u)+y $.
		\end{enumerate}
	\end{lemma}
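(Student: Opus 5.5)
We prove the three properties directly from the definitions of $\mu$, $\hat u$ and $\beta$. The strategy is to track how each construction step transforms under the relevant operation: radial symmetry, scaling, or translation.

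\emph{Property (b).} For $t\neq0$ we have $|tu|=|t||u|$, hence $\mu(tu)=|t|\mu(u)$ and $\max_{\R}\mu(tu)=|t|\max_{\R}\mu(u)$. It follows that $\widehat{tu}=|t|\hat u$. In the quotient defining $\beta(tu)$, the factor $|t|$ appears in both the numerator $\I x\widehat{tu}\,dx$ and the denominator $|\widehat{tu}|_1$, so it cancels and $\beta(tu)=\beta(u)$.

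\emph{Property (c).} Translation commutes with each step of the construction. Changing variables $z=y'-y$ in the ball average gives
\[
\mu(u_y)(x)=\frac{1}{|B_1(x)|}\int_{B_1(x)}|u(y'-y)|\,dy'=\mu(u)(x-y).
\]
Consequently $\max_{\R}\mu(u_y)=\max_{\R}\mu(u)$ and $\widehat{u_y}(x)=\hat u(x-y)$, and the $L^1$ norm is unchanged. Substituting $x=z+y$ then gives
\[
\I x\,\hat u(x-y)\,dx=\I z\,\hat u(z)\,dz+y\,|\hat u|_1,
\]
and dividing by $|\hat u|_1$ yields $\beta(u_y)=\beta(u)+y$.

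\emph{Property (a).} If $u$ is radial, then $|u|$ is radial, so $|u|(Rz)=|u|(z)$ for every $R\in O(3)$. Since $B_1(Rx)=R\,B_1(x)$ and Lebesgue measure is invariant under $R$, we get $\mu(u)(Rx)=\mu(u)(x)$. Hence $\mu(u)$, and therefore $\hat u$, is radial. In particular $\hat u(-x)=\hat u(x)$, so the integrand $x\hat u(x)$ is odd and has zero integral. This gives $\beta(u)=0$.

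The only point that needs care is that $\beta$ is well defined, that is, $0<|\hat u|_1<\infty$, and this must hold for each of $tu$ and $u_y$. By the continuity and decay of $\mu(u)$ noted just before the lemma, the maximum $M=\max_{\R}\mu(u)>0$ is attained. The function $\hat u$ is then continuous and positive on a neighbourhood of a maximum point. It vanishes outside the bounded set $\{\mu(u)\ge M/2\}$. Therefore $\hat u\in L^1$ with compact support, $|\hat u|_1>0$, and $x\hat u\in L^1$.

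Since $tu$ and $u_y$ are nonzero whenever $u$ is, the same argument applies to them. The computations above are then fully justified.
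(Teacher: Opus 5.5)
Your proof is correct. Each of (a)--(c) follows exactly as you say by tracking how $\mu$, $\hat u$ and $\beta$ transform under rotations, scalar multiplication and translations, and your check that $0<|\hat u|_1<\infty$ (via continuity of $\mu(u)$, $\mu(u)\to0$ at infinity and $\max\mu(u)>0$ for $u\neq0$) settles well-definedness. The paper gives no proof of its own and simply recalls these facts from \cite{ACR2008}, where they are checked by this same direct computation from the definitions.
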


	Define
\begin{equation*}
	b:=\inf\{E_A(u):u\in\mathcal{N}_A~{\rm and}~\beta(u)=0\}.
\end{equation*}
\begin{lemma}\label{b>m0}
	If $m_A$ is not achieved, then
    \begin{align*}
        m_A=m_0<b.
    \end{align*}
\end{lemma}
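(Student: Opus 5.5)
First I would show $m_A=m_0$. By Lemma \ref{le:0<mAleqm0} we already have $m_A\le m_0$, so suppose $m_A<m_0$. Ekeland's variational principle on the $C^1$ manifold $\mathcal{N}_A$ gives a minimizing sequence that is a $(PS)_{m_A}$ sequence for $E_A$ restricted to $\mathcal{N}_A$. Since $G_A'(u)u\le-2\rho^2$ on $\mathcal{N}_A$ (Lemma \ref{properties}(b)) and the sequence is bounded, the Lagrange multipliers go to zero, so it is a $(PS)_{m_A}$ sequence for $E_A$ on the whole space. As $m_A\in(0,m_0)$, Corollary \ref{cpt1} gives a strongly convergent subsequence. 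The limit lies in $\mathcal{N}_A$ because $\mathcal{N}_A$ is closed and $\|u\|_A\ge\rho$, so $m_A$ is achieved, a contradiction. Hence $m_A=m_0$.

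Next, $b\ge m_A=m_0$ is trivial, so I would argue by contradiction and assume $b=m_0$. Take $u_n\in\mathcal{N}_A$ with $\beta(u_n)=0$ and $E_A(u_n)\to m_0$. These form a minimizing sequence for $E_A$ on $\mathcal{N}_A$. I would avoid invoking Ekeland here, since it moves points and could spoil the constraint $\beta=0$. Instead I would use the standard deformation (quantitative Ekeland) argument: there are $w_n\in\mathcal{N}_A$ with $\|w_n-u_n\|_A\to0$ and $\{w_n\}$ a $(PS)_{m_0}$ sequence for $E_A$. Because $\beta$ is continuous, $\beta(w_n)$ will still carry the information we need; concretely we will see $w_n$ and $u_n$ share the same profile. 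With $m_A$ not achieved, $w_n$ cannot converge strongly, since the limit would be a minimizer. So Lemma \ref{splitting}(b) applies: $w_n=u^0+\sum_{j=1}^k u^j(\cdot-y_n^j)+o(1)$ in $H^1(\mathbb{R}^3,\mathbb{C})$ and $m_0=E_A(u^0)+\sum_j E_0(u^j)$. If $u^0\neq0$ then $E_A(u^0)\ge m_A=m_0$ forces $k=0$, which is impossible. So $u^0=0$ and $k=1$, $E_0(u^1)=m_0$. By Lemma \ref{isolatedlevel}, $u^1=e^{i\theta}\omega(\cdot-a)$. Absorbing $a$ into $y_n:=y_n^1$, we get $u_n=e^{i\theta}\omega(\cdot-y_n)+o(1)$ with $|y_n|\to\infty$.

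The key step, and the main obstacle, is to pass the barycenter through this decomposition: show that $\beta(u_n)-y_n\to\beta(\omega)=0$, using Lemma \ref{barycenterproperty}(a),(c). Since $\beta$ is continuous only on $H_A^1\setminus\{0\}$ and is not uniformly continuous under translation, I would work directly with $\mu$. First, $|\mu(f)(x)-\mu(g)(x)|\le C|f-g|_2$ uniformly in $x$, and the $H^1$ and $H_A^1$ norms are equivalent by $(A_1)$. Hence $\mu(u_n)(\cdot+y_n)\to\mu(\omega)$ uniformly. This gives convergence of the maxima and uniform convergence of $\hat u_n(\cdot+y_n)\to\hat\omega$. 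The supports of $\hat u_n(\cdot+y_n)$ eventually lie in a fixed ball, since $\mu(\omega)$ decays and the threshold stays near $\frac12\max\mu(\omega)>0$. Then the barycenter integrals converge, and $\beta(u_n)-y_n\to0$, so $|\beta(u_n)|\to\infty$. This contradicts $\beta(u_n)=0$ and gives $b>m_0$.
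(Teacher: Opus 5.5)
Your proof is correct and follows the paper's route. You use Ekeland on $\mathcal{N}_A$ with vanishing Lagrange multipliers and Corollary~\ref{cpt1} to get $m_A=m_0$, then Lemma~\ref{splitting}, Lemma~\ref{isolatedlevel} and the barycenter to rule out $b=m_0$. Your version is slightly more careful than the paper's on one point. You transfer the profile $e^{i\theta}\omega(\cdot-y_n)$ from the $(PS)$ sequence $w_n$ back to the constrained sequence $u_n$, where $\beta(u_n)=0$ actually holds, using $\|w_n-u_n\|_A\to0$ and the uniform estimate on $\mu$. The paper instead tacitly applies $\beta=0$ to the Ekeland sequence itself. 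Your ``quantitative Ekeland'' step is just Ekeland's principle, so your remark about avoiding Ekeland is unnecessary.
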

\begin{proof}
   First, we prove that if $m_A<m_0$, then $m_A$ is achieved, namely \eqref{problem} has a ground state solution. By definition of $m_A$, there exists a minimizing sequence $\{v_n\}\subset\mathcal{N}_A$ such that $E_A(v_n)\to m_A$. By virtue of the Ekeland variational principle, we find two sequences $\{u_n\}\subset\mathcal{N}_A$ and $\{\lambda_n\}\subset\mathbb{R}$ satisfying
   \begin{equation*}
	E_A(u_n)\rightarrow m_A,~E_A^\prime(u_n)-\lambda_n G^\prime_A(u_n)\rightarrow0~{\rm and}~\Vert u_n-v_n\Vert_A\rightarrow0~{\rm as}~n\rightarrow\infty,
\end{equation*}
where $G_A(u):=E_A^\prime(u)u$, see the proof of Lemma \ref{properties}.  Since $E_A(u_n)$ is bounded and $\{u_n\}\subset\mathcal{N}_A$, we have
\begin{equation*}
	\begin{aligned}
		m_A+o(1)=E_A(u_n)&=\dfrac{1}{4}\Vert u_n\Vert_A^2+\left(\dfrac{1}{4}-\dfrac{1}{p}\right)\I |u_n|^pdx
		>\dfrac{1}{4}\Vert u_n\Vert_A^2,
	\end{aligned}
\end{equation*}
which implies that $\{u_n\}$ is bounded in $\HA$. 
 For any $\varphi\in \HA$, using the H\"older inequality and the boundedness of $\{u_n\}$ in $\HA$, it is easy to check that
\begin{equation*}
	\begin{aligned}
		|G^\prime_A(u_n)\varphi|&=\left|\Re\left(2\I (\nabla_A u_n\cdot\overline{\nabla_A \varphi}+V(x)u_n\overline{\varphi})dx+4\int_{\R}K(x)\phi_{|u_n|}u_n\overline{\varphi}dx-p\I |u_n|^{p-2}u_n\overline{\varphi}dx\right)\right|\\
        &\leq C\Vert\varphi\Vert_A,
	\end{aligned}
\end{equation*}
which implies that $\left\{G^\prime_A(u_n)\right\}$ is bounded in $(\HA)^*$.  By $E_A^\prime(u_n)-\lambda_n G^\prime_A(u_n)\rightarrow0$,
 we get
\begin{equation*}
	E_A^\prime(u_n)u_n-\lambda_n G^\prime_A(u_n)u_n\rightarrow0.
\end{equation*}
It follows from the proof of Lemma \ref{properties} (b) that $\lambda_n\rightarrow0$. Thus, we deduce that $\{u_n\}$ is a $(PS)_{m_A}$ sequence of $E_A$. Since $0<m_A<m_0$, Corollary \ref{cpt1} implies that $\{u_n\}$ satisfies the $(PS)_{m_A}$ condition. As a result, we obtain a nontrivial critical point $u\in \HA$ of $E_A$ with $E_A(u)=m_A$.

Next, we show that $b>m_A=m_0$ provided that $m_A$ is not achieved. It is clear that $b\geq m_A$. We suppose by contradiction that $b=m_A$.
	By definition of $b$, there exists a minimizing sequence $\{\hat{v_n}\}\subset\mathcal{N}_A$ satisfying $E_A(\hat{v_n})\rightarrow b=m_A$ and $\beta(\hat{v_n})=0$ for every $n\in \N$. Again, using the Ekeland variational principle,  we find two sequences $\{\hat{u_n}\}\subset\mathcal{N}_A$ and $\{\lambda_n\}\subset\mathbb{R}$ satisfying
	\begin{equation*}
		E_A(\hat{u_n})\rightarrow m_A,~E_A^\prime(\hat{u_n})-\lambda_n G^\prime_A(\hat{u_n})\rightarrow0~{\rm and}~\Vert \hat{u_n}-\hat{v_n}\Vert_A\rightarrow0~{\rm as}~n\rightarrow\infty.
	\end{equation*}
	By a similar argument as above, we prove that $\{\hat{u_n}\}$ is a $(PS)_{m_A}$ sequence of $E_A$. As $m_A$ is not achieved, combining Lemma \ref{splitting} and the fact $m_A=m_0$,
	we get a sequence $\{y_n\}$ satisfying $|y_n|\rightarrow\infty$ and a nontrivial solution $u^1$ of \eqref{limitproblem} such that
	\begin{equation*}
		m_A=m_0=E_0(u^1)\quad\mathrm{and}\quad \hat{u_n}-u^1(\cdot-y_n)\to0.
	\end{equation*}
From Lemma \ref{isolatedlevel}, we deduce that $u^1=\omega$. 
Since $\omega$ is a radial function, Lemma \ref{barycenterproperty} (a) yields that $\beta(\omega)=0$. Set $z_n:=\hat{u_n}(\cdot+y_n)$, then $z_n\to u^1=\omega$. Applying the barycenter mapping to
both sides, we obtain
\begin{equation*}
	-y_n=\beta(\hat{u_n})-y_n=\beta(z_n)\rightarrow\beta(\omega)=0,
\end{equation*}
which contradicts to the fact that $|y_n|\rightarrow\infty$. Hence, we conclude that $b>m_A=m_0$.
\end{proof}	
	
	Next, we introduce the operator $\mathcal{T}:\R\rightarrow\mathcal{N}_A$ defined by
	\begin{equation*}
		\mathcal{T}[y](x):=t_y\omega_y=t_y\omega(x-y),
	\end{equation*}
	where $ t_y $ projects $ \omega(\cdot-y) $ onto $ \mathcal{N}_A $, according to Lemma \ref{properties} (d). 
	
	\begin{lemma}\label{beta1}
		$\beta\left(\mathcal{T}[y]\right)=y$.
	\end{lemma}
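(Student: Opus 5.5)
The proof is a direct consequence of Lemma \ref{barycenterproperty}, once we check that $\mathcal{T}[y]$ is a nonzero element of $\HA$. The plan is to peel off the scalar factor $t_y$ using property (b), then the translation using property (c), and finally evaluate $\beta(\omega)$ using property (a).

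First, $t_y$ is well defined and positive. Indeed $\omega\in H^1(\R,\mathbb{R})$, and by $(A_1)$ we have $H^1\subset\HA$, so $\omega_y=\omega(\cdot-y)\in\HA\setminus\{0\}$. Lemma \ref{properties} (d) then gives a unique $t_y>0$ with $t_y\omega_y\in\mathcal{N}_A$. In particular $\mathcal{T}[y]\neq0$, so $\beta(\mathcal{T}[y])$ makes sense.

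By Lemma \ref{barycenterproperty} (b), since $t_y\neq0$, we get $\beta(\mathcal{T}[y])=\beta(t_y\omega_y)=\beta(\omega_y)$. By Lemma \ref{barycenterproperty} (c), $\beta(\omega_y)=\beta(\omega)+y$. Finally, $\omega$ is radially symmetric, as recalled after \eqref{limitproblemR}, so Lemma \ref{barycenterproperty} (a) gives $\beta(\omega)=0$. Chaining these equalities yields $\beta(\mathcal{T}[y])=y$.

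There is no genuine obstacle. The only care needed is that the properties of $\beta$ use only $|u|$ through $\mu(u)$, and so do not depend on the magnetic structure of $\HA$. For instance, (a) follows because $\mu(\omega)$ is radial, so $\hat\omega$ is radial and $\int_{\R} x\hat\omega(x)\,dx=0$ by oddness. Likewise, (c) follows because $\mu(u_y)=\mu(u)(\cdot-y)$, and the change of variables gives $\beta(u_y)=\beta(u)+y$. Both facts are already contained in Lemma \ref{barycenterproperty}, so they can be quoted directly.
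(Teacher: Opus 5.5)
Your proof is correct and follows the paper's argument: remove the factor $t_y$, apply the translation property (c), and use the radial symmetry of $\omega$ with (a). You are slightly more careful than the paper. It cites only (a) and (c) and uses the scale invariance (b) silently, and it never checks that $\mathcal{T}[y]\neq 0$, which you do.
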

\begin{proof}
		Note that $\omega$ is radial. From Lemma \ref{barycenterproperty} (a) and (c), it follows that
	\begin{equation*}
		\beta\left(\mathcal{T}[y]\right)=\beta\left(t_y\omega(\cdot-y)\right)=\beta\left(\omega(\cdot-y)\right)=\beta\left(\omega\right)+y=y,
	\end{equation*}
	which concludes the proof.
\end{proof}
	
	\begin{lemma}\label{beta2}
		$E_A\left(\mathcal{T}[y]\right)\rightarrow m_0$ if $|y|\rightarrow\infty$.
	\end{lemma}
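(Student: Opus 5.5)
This is essentially the argument already carried out in the second half of the proof of Lemma \ref{le:0<mAleqm0}, but now with an arbitrary family $y$, $|y|\to\infty$, instead of a fixed sequence. Since the claim concerns a limit as $|y|\to\infty$, it suffices to take any sequence $\{y_n\}$ with $|y_n|\to\infty$ and show that $E_A(\mathcal{T}[y_n])\to m_0$. We write $t_n:=t_{y_n}$, so that $\mathcal{T}[y_n]=t_n\omega_{y_n}\in\mathcal{N}_A$.

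\textbf{Step 1: the quadratic part converges.} Change variables $x\mapsto x+y_n$ and expand $|\nabla_A\omega_{y_n}|^2$. Since $\omega$ is real, the cross term $2\Re\left(-i\nabla\omega\cdot A\,\omega\right)$ vanishes, which gives
\[
\|\omega_{y_n}\|_A^2=\I\left(|\nabla\omega|^2+\left(|A(x+y_n)|^2+V(x+y_n)\right)\omega^2\right)dx .
\]
By $(A_1)$ and $(V_1)$ we have $|A(x+y_n)|^2+V(x+y_n)\to V_\infty$ pointwise, with a uniform $L^\infty$ bound. Dominated convergence then yields $\|\omega_{y_n}\|_A^2\to\|\omega\|_\bullet^2$. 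The $L^p$ term $\I|\omega_{y_n}|^p\,dx=\I|\omega|^p\,dx$ is independent of $n$.

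\textbf{Step 2: the Poisson term vanishes.} By Step 1, $\{\omega_{y_n}\}$ is bounded in $\HA$. Since $\omega_{y_n}\to0$ in $L^q_{\rm loc}$, we get $\omega_{y_n}\rightharpoonup0$ in $\HA$. Lemma \ref{propertyofphi}(b) then gives
\[
\I K(x)\phi_{|\omega_{y_n}|}\omega_{y_n}^2\,dx\to0 .
\]
One can also see this directly: by the estimate in \eqref{Lu}, this integral is bounded by $|K|_{L^2}$ times terms of size $|\omega_{y_n}|$, and $K\in L^2$ with $\omega_{y_n}\to 0$ uniformly on compact sets.

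\textbf{Step 3: identify $t_n$ and pass to the limit.} Use the Nehari identity \eqref{22}, divided by $t_n^2$:
\[
\|\omega_{y_n}\|_A^2+t_n^2 a_n=t_n^{p-2}\I|\omega|^p\,dx,\qquad a_n:=\I K\phi_{|\omega_{y_n}|}\omega_{y_n}^2\,dx\ge0 .
\]
First we show that $\{t_n\}$ is bounded above and away from zero. The left side is at least $\|\omega_{y_n}\|_A^2\to\|\omega\|_\bullet^2>0$, which forces $t_n$ to be bounded below. Since $p-2>2$ and $a_n\to0$, $t_n$ is also bounded above. Passing to the limit and using $\|\omega\|_\bullet^2=|\omega|_p^p$ then gives $t_n\to1$. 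Inserting this into \eqref{11} yields
\[
E_A(\mathcal{T}[y_n])\to\frac12\|\omega\|_\bullet^2-\frac1p|\omega|_p^p=E_0(\omega)=m_0 .
\]

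The only delicate point is the a priori bound on $t_n$ in Step 3, since the Poisson term carries the weight $t_n^4$. It is handled by the comparison of exponents $2<4<p$ together with $a_n\to0$. Everything else repeats the computation already done in Lemma \ref{le:0<mAleqm0}.
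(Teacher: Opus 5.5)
Your proof is correct and follows essentially the same route as the paper, which cites the proof of Lemma \ref{le:0<mAleqm0} for $t_y\to1$ and then writes $E_A(\mathcal{T}[y])$ as the $E_0$-part plus the $(|A|^2+V-V_\infty)$ and Poisson perturbation terms, both of which vanish; your explicit a priori bounds on $t_n$ (using $2<4<p$) make rigorous a step the paper leaves implicit. The only loose point is the parenthetical ``direct'' argument in Step 2: as written it would need $K$ split into a compactly supported part and an $L^2$-small tail, but your main argument via Lemma \ref{propertyofphi}(b) already settles that step.
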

\begin{proof}
	By the proof of Lemma \ref{le:0<mAleqm0}, we have $t_y\rightarrow1$ as $|y|\rightarrow\infty$.
	Moreover,
		\begin{equation*}
		\begin{aligned}
			E_A\left(\mathcal{T}[y]\right)=E_A(t_y\omega_y)&=\dfrac{t_y^2}{2}\Vert \omega_y\Vert_A^2+\frac{t_y^4}{4}\int_{\mathbb{R}^3}K(x)\phi_{|\omega_y|}|\omega_y|^2\,dx-\dfrac{t_y^p}{p}\I |\omega_y|^pdx\\
			&=\dfrac{t_y^2}{2}\I \left(|\nabla \omega|^2+\left(|A(x+y)|^2+V(x+y)\right)|\omega|^2\right)dx\\
            &\quad+\frac{t_y^4}{4}\int_{\mathbb{R}^3}K(x)\phi_{|\omega_y|}|\omega_y|^2\,dx-\dfrac{t_y^p}{p}\I |\omega|^pdx\\
			&=\dfrac{t_y^2}{2}\I \left(|\nabla \omega|^2+V_\infty|\omega|^2\right)dx-\dfrac{t_y^p}{p}\I |\omega|^pdx\\
			&\quad+\dfrac{t_y^2}{2}\I \left(|A(x+y)|^2+V(x+y)-V_\infty\right)|\omega|^2dx\\
            &\quad+\frac{t_y^4}{4}\int_{\mathbb{R}^3}K(x)\phi_{|\omega_y|}|\omega_y|^2\,dx\\
			&\rightarrow E_0(\omega)=m_0.
		\end{aligned}
	\end{equation*}
	This completes the proof.
\end{proof}

	\begin{lemma}\label{beta3}
		Under the assumptions of  Theorem $\ref{highenergy}$, then for  $y\in\R$,
		\begin{equation*}
		E_A\left(\mathcal{T}[y]\right)<m_0+\hat{m},
		\end{equation*}
			where $\hat{m}=\min\{m_0,\xi\}$ and $\xi$ is given in Lemma $\ref{isolatedlevel}$.
	\end{lemma}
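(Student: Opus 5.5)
The plan is to compare the fiber map of $E_A$ along $\omega_y$ with that of $E_0$ along $\omega$, and to control the two error terms (the excess of $|A|^2+V$ over $V_\infty$, and the Poisson term) through a bound on the Nehari projection parameter $t_y$; the assumption \eqref{AE} is designed exactly to make these errors at most $\hat m$.

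\emph{Setting up the comparison.} Fix $y\in\R$ and write $t=t_y$, $\omega_y=\omega(\cdot-y)$. Since $\omega$ is real-valued, $|\nabla_A\omega_y|^2=|\nabla\omega_y|^2+|A|^2\omega_y^2$. Hence
\[
\|\omega_y\|_A^2=\|\omega\|_\bullet^2+\I\left(|A(x+y)|^2+V(x+y)-V_\infty\right)\omega^2\,dx<\|\omega\|_\bullet^2+\Lambda|\omega|_2^2 .
\]
The inequality is strict because $\omega>0$ everywhere and $|A|^2+V<V_\infty+\Lambda$ pointwise. For the Poisson term I will use \eqref{normphi}--\eqref{S-1}:
\[
B_y:=\I K\phi_{|\omega_y|}\omega_y^2\,dx\le S^{-3}|K|_2^2\,\||\omega_y|\|_{\mathcal{D}^{1,2}}^4 .
\]
Since $\||\omega_y|\|_{\mathcal{D}^{1,2}}=|\nabla\omega|_2$ is translation invariant, this is at most $S^{-3}|K|_2^2\|\omega\|_A^4=:B$, where $\|\omega\|_A$ is the norm appearing in \eqref{AE}. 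Then, using $\I|\omega|^p=\|\omega\|_\bullet^2$ and $E_0(t\omega)\le E_0(\omega)=m_0$ (by Remark \ref{Re.N0property}, the analogue of Lemma \ref{properties}(d)), I get
\[
E_A(t\omega_y)=E_0(t\omega)+\frac{t^2}{2}\I(|A_y|^2+V_y-V_\infty)\omega^2+\frac{t^4}{4}B_y< m_0+\frac{t^2}{2}\Lambda|\omega|_2^2+\frac{t^4}{4}B .
\]

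\emph{Bounding $t_y$.} This is the key step. If $t\le1$, the error is at most $\tfrac12(\Lambda|\omega|_2^2+B)$, and this is $\le\hat m$ by \eqref{AE}, since the second factor there is $\ge1$. If $t>1$, the Nehari identity $t^{p-2}\|\omega\|_\bullet^2=\|\omega_y\|_A^2+t^2B_y$ gives
\[
t^{p-2}\|\omega\|_\bullet^2\le t^2\left(\|\omega\|_\bullet^2+\Lambda|\omega|_2^2+B\right).
\]
Using $V_\infty|\omega|_2^2\le\|\omega\|_\bullet^2$, this yields
\[
t^{p-4}\le V_\infty^{-1}\Lambda+\frac{B}{\|\omega\|_\bullet^2}+1,
\]
so $t^4\le\left(V_\infty^{-1}\Lambda+B/\|\omega\|_\bullet^2+1\right)^{4/(p-4)}$.

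\emph{Conclusion.} For $t>1$, the error is at most
\[
\frac{t^4}{2}\left(\Lambda|\omega|_2^2+B\right)=\frac{|\omega|_2^2}{2}\,t^4\left(\Lambda+\frac{B}{|\omega|_2^2}\right)\le\hat m,
\]
exactly by \eqref{AE}. Combined with the strict inequality in the comparison step, this gives $E_A(\mathcal{T}[y])<m_0+\hat m$. The main obstacle is this $t_y$ bound: the exponent $4/(p-4)$ must come out exactly as in \eqref{AE}, and it is here that $p>4$ is used.
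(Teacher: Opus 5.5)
Your proposal is correct and follows essentially the same route as the paper. Both arguments split off $E_0(t_y\omega)\le m_0$, use the Nehari identity to obtain $t_y^{p-4}\le 1+V_\infty^{-1}\Lambda+S^{-3}|K|_2^2\|\omega\|_A^4/\|\omega\|_\bullet^2$, and then invoke \eqref{AE}. The only difference is that the paper first proves $t_y\ge 1$ using the lower bound $|A|^2+V>V_\infty$, whereas your case split $t\le 1$ / $t>1$ skips that step and so needs only the upper bound on $|A|^2+V$.
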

    \begin{proof}
        By Lemma \ref{properties} (d), there exists $t_y>0$ such that $t_y\omega_y\in\mathcal{N}_A$. Then
        \begin{equation}\label{da+1}
            t_y^2\Vert\omega_y\Vert_A^2+t_y^4\int_{\mathbb{R}^3}K(x)\phi_{|\omega_y|}|\omega_y|^2\,dx-t_y^p\int_{\mathbb{R}^3}|\omega_y|^p\,dx=0.
        \end{equation}

Since $\omega\in\mathcal{N}_0$, one has
        \begin{align*}
            1&=\frac{\Vert\omega_y\Vert^2_\bullet}{\int_{\mathbb{R}^3}|\omega_y|^p\,dx}\\
            &=\frac{t_y^{p-2}\int_{\mathbb{R}^3}|\omega_y|^p\,dx-t_y^2\int_{\mathbb{R}^3}K(x)\phi_{|\omega_y|}|\omega_y|^2\,dx-\int_{\mathbb{R}^3}\left(|A(x)|^2+V(x)-V_\infty\right)|\omega_y|^2\,dx}{\int_{\mathbb{R}^3}|\omega_y|^p\,dx}\\
            &\leq t_y^{p-2},
        \end{align*}
        which implies that $t_y\geq1$.
        As a result, from \eqref{da+1} we have
        \begin{align*}
            t_y^4\left(\Vert\omega_y\Vert_A^2+\int_{\mathbb{R}^3}K(x)\phi_{|\omega_y|}|\omega_y|^2\,dx-t_y^{p-4}\int_{\mathbb{R}^3}|\omega_y|^p\,dx\right)\geq0.
        \end{align*}
By \eqref{normphi} and \eqref{S-1}, we have
        \begin{equation*}
            \int_{\mathbb{R}^3}K(x)\phi_{|\omega_y|}|\omega_y|^2\,dx\leq S^{-3}|K|_2^2\Vert \omega_y\Vert^4_{\mathcal{D}^{1,2}}= S^{-3}|K|_2^2\Vert \omega\Vert^4_{\mathcal{D}^{1,2}}\leq S^{-3}|K|_2^2\Vert \omega\Vert^4_A.
        \end{equation*}
        From \eqref{S-1}, recalling that $\|\omega\|_\bullet^2=|\omega|_p^p$, it follows that
        \begin{equation}\label{t_y}
            \begin{aligned}
                 t_y^{p-4}&\leq\frac{\Vert\omega_y\Vert_A^2+\int_{\mathbb{R}^3}K(x)\phi_{|\omega_y|}|\omega_y|^2\,dx}{\int_{\mathbb{R}^3}|\omega_y|^p\,dx}\\
            &\leq\frac{\Vert\omega\Vert^2_\bullet+\int_{\mathbb{R}^3}\left(|A(x)|^2+V(x)-V_\infty\right)|\omega_y|^2\,dx+\int_{\mathbb{R}^3}K(x)\phi_{|\omega_y|}|\omega_y|^2\,dx}{\Vert\omega\Vert^2_\bullet}\\
            &< 1+ V_\infty ^{-1}\sup_{x\in\mathbb{R}^3}\left(|A(x)|^2+V(x)-V_\infty\right)+\frac{S^{-3}\left|K\right|_2^2\Vert\omega\Vert_A^4}{\Vert\omega\Vert^2_\bullet}.
            \end{aligned}
\end{equation}

By Remark \ref{Re.N0property}, since $\omega\in\mathcal{N}_0$, then
\begin{equation*} \max_{t>0}E_0(t\omega)=E_0(\omega)=m_0.
\end{equation*}
Using the fact that $t_y\geq 1$, from \eqref{AE} and \eqref{t_y} we conclude that
        \begin{align*}
			E_A\left(\mathcal{T}[y]\right)
			&=\dfrac{t_y^2}{2}\I \left(|\nabla \omega|^2+V_\infty|\omega|^2\right)dx-\dfrac{t_y^p}{p}\I |\omega|^pdx\\
			&\quad+\dfrac{t_y^2}{2}\I \left(|A(x+y)|^2+V(x+y)-V_\infty\right)|\omega|^2dx+\frac{t_y^4}{4}\int_{\mathbb{R}^3}K(x)\phi_{|\omega_y|}|\omega_y|^2\,dx
            \\
            &=E_0(t_y\omega)+\dfrac{t_y^2}{2}\I \left(|A(x+y)|^2+V(x+y)-V_\infty\right)|\omega|^2dx+\frac{t_y^4}{4}\int_{\mathbb{R}^3}K(x)\phi_{|\omega_y|}|\omega_y|^2\,dx
            \\
			&\leq m_0+\dfrac{t_y^2}{2}\I \left(|A(x+y)|^2+V(x+y)-V_\infty\right)|\omega|^2dx+\frac{t_y^4}{4}\int_{\mathbb{R}^3}K(x)\phi_{|\omega_y|}|\omega_y|^2\,dx\\
            &\leq m_0+\frac{t_y^4}{2}\left(\sup_{x\in\mathbb{R}^3}(|A(x)|^2+V(x)-V_\infty)|w|_2^2+S^{-3}|K|_2^2\Vert \omega \Vert_A^4 \right)\\
            &<m_0+\hat{m},
		\end{align*}
        as desired.
    \end{proof}

\begin{proof}[{\rm \textbf{Proof of Theorem \ref{highenergy}}}]
By Lemma \ref{le:0<mAleqm0}, we have $m_A\leq m_0$. If $m_A$ is achieved, the proof is completed. If $m_A$ is not achieved, Lemma \ref{b>m0} yields that $m_A=m_0$.
In the following, we prove the existence of a higher-energy bound state solution.
From  Lemma \ref{b>m0}, we know that $m_A=m_0<b$. Then there exists a positive number $\delta>0$ such that $b>m_0+\delta$.  Lemma \ref{beta1} implies that $\beta\left(\mathcal{T}[0]\right)=0$, so that $E_A\left(\mathcal{T}[0]\right)\geq b$. Besides, from Lemma \ref{beta3}, one has $E_A\left(\mathcal{T}[0]\right)<m_0+\hat{m}$, so there exists a positive number $\tau>0$ such that $b\leq m_0+\hat{m}-\tau$. Hence, we conclude that
\begin{align*}
    m_A=m_0<m_0+\delta<b\leq m_0+\hat{m}-\tau<m_0+\hat{m}.
\end{align*}
Arguing by contradiction, we assume that $E_A$ has no critical values in $(m_0, m_0+\hat{m})$. It follows from Corollary \ref{cpt2} and the deformation lemma \cite[Lemma 5.15]{Minimax} that there is a continuous retraction given by
\begin{align*}
    \chi:\mathcal{N}_A\cap E_A^{m_0+\hat{m}-\tau}\to \mathcal{N}_A\cap E_A^{m_0+\delta},
\end{align*}
where $E_A^a=\left\{u\in\HA:E_A(u)\leq a\right\}$. It holds that $\chi(u)=u$ for all $u\in\mathcal{N}_A\cap E_A^{m_0+\delta}$. Using Lemma \ref{beta2}, we deduce that for $\delta$ above, there exists $R>0$ such that 
\begin{equation}\label{|y|=R}
	m_0<\max_{|y|=R}E_A\left(\mathcal{T}[y]\right)<m_0+\delta<b.
\end{equation}
Next, we define a map $\mathcal{H}:\overline{B_R(0)}\to \partial B_R(0)$ by
\begin{equation*}
	\mathcal{H}(y):=R\cdot\frac{\beta\circ\chi\circ\mathcal{T}[y]}{\left|\beta\circ\chi\circ\mathcal{T}[y]\right|}.
\end{equation*}
One can easily check that $\mathcal{H}$ is continuous. Furthermore, Lemma \ref{beta1} and \eqref{|y|=R} imply that $\mathcal{H}(y)=y$ for all $y\in\partial B_R(0)$, so we conclude that $\mathcal{H}$ is a continuous retraction from $\overline{B_R(0)}$ onto $\partial B_R(0)$. Let us consider the homotopy:
\begin{equation*}
    \mathcal{Q}:[0,1]\times {\overline{B_R(0)}}\to \mathbb{R}^3,\quad \mathcal{Q}(t,u)=(1-t)y+t\mathcal{H}(y).
\end{equation*}
For any $y\in\partial B_R(0)$, we have
\begin{equation*}
	\mathcal{Q}(t,u)=(1-t)y+ty=y\neq0,
\end{equation*}
which yields that $0\not\in \mathcal{Q}\left([0,1]\times{\partial B_R(0)}\right)$. Moreover,  by virtue of the homotopy invariance
of the topological degree, one has
\begin{equation*}
    {\rm deg}\left(\mathcal{H}, B_R(0), 0\right)={\rm deg}\left(id, B_R(0), 0\right)=1,
\end{equation*}
which is a contradiction. Therefore, we conclude that $E_A$ has a critical value in $(m_0, m_0+\hat{m})$, namely \eqref{problem} has a bound state solution $u$ with $E_A(u)\in(m_0, m_0+\hat{m})$. This completes the proof.
\end{proof}

\begin{proof}[{\rm \textbf{Proof of Theorem \ref{cor:fixed-AV} and Theorem \ref{cor:fixed-K}}}]
   By the argument in Remark \ref{bigremark}, the proof follows immediately from Theorem \ref{highenergy}.
\end{proof}

\section{Existence of a ground state solution}
From Corollary \ref{cpt1},  $(PS)_d$ condition is satisfied if the energy level $d$ is strictly lower than the compactness threshold $m_0$. Next, we will prove that the least energy  $m_A$ is indeed strictly lower than $m_0$ and we use a minimization method to obtain the existence of a ground state solution for \eqref{problem}.

Recall that the sequence $\{y_n\}\subset\mathbb{R}^3$ constructed in the proof of Lemma \ref{le:0<mAleqm0} satisfies $|y_n|\to\infty$ as $n\to\infty$ and $t_n\omega_{y_n}\in\mathcal{N}_A$ for every $n\in \N$.
\begin{lemma}\label{strictlysmaller}
    Suppose that $(H_2)$ holds. Then there exists $c_2>0$ and $M>0$ such that
    \begin{equation}\label{ma<m0}
        E_A(t_n\omega_{y_n})\leq m_0-c_2e^{-\sigma|y_n|},\quad {\rm for~}|y_n|\geq M.
    \end{equation}
    In particular, $m_A<m_0$.
\end{lemma}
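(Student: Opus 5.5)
We compare $E_A(t_n\omega_{y_n})$ with $E_0(t_n\omega)$, exactly as in the proof of Lemma \ref{beta2}. Translating, we write
\begin{equation*}
E_A(t_n\omega_{y_n})=E_0(t_n\omega)+\frac{t_n^2}{2}\I\left(|A(x+y_n)|^2+V(x+y_n)-V_\infty\right)\omega^2dx+\frac{t_n^4}{4}\I K(x)\phi_{\omega_{y_n}}\omega_{y_n}^2dx .
\end{equation*}
Since $\omega\in\mathcal N_0$, Remark \ref{Re.N0property} gives $E_0(t_n\omega)\le m_0$. By the proof of Lemma \ref{le:0<mAleqm0}, $t_n\to1$, so $t_n\in[1/2,2]$ for $n$ large. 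It then suffices to show two things: the potential term is at most $-Ce^{-\sigma|y_n|}$, and the Poisson term is $o(e^{-\sigma|y_n|})$.

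For the potential term, $(H_2)$ gives $|A|^2+V-V_\infty\le -c_0e^{-\sigma|x|}$. Hence
\begin{equation*}
\I\left(|A(x+y_n)|^2+V(x+y_n)-V_\infty\right)\omega^2dx\le -c_0\I e^{-\sigma|x+y_n|}\omega^2(x)\,dx\le -c_0e^{-\sigma|y_n|}\I e^{-\sigma|x|}\omega^2dx,
\end{equation*}
using $|x+y_n|\le|x|+|y_n|$. This yields the bound $-c\,e^{-\sigma|y_n|}$ with $c>0$ independent of $n$. (One could instead invoke Lemma \ref{IntergalLemma}, but the elementary triangle inequality already suffices.)

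The main obstacle is the Poisson term, and this is where $\sigma<2\sqrt{V_\infty}$ enters. I would bound it by $\|\phi\|_{\mathcal D^{1,2}}^2\le S^{-3}\big(\I K^{6/5}\omega_{y_n}^{12/5}\big)^{5/3}$, via the duality argument behind \eqref{S-1} with $|K\omega_{y_n}^2|_{6/5}$ in place of $|K|_2|\omega|_6^2$. Alternatively, and more cleanly, I would use the representation formula to write the term as
\begin{equation*}
\frac1{4\pi}\iint \frac{K(x)\omega_{y_n}^2(x)K(z)\omega_{y_n}^2(z)}{|x-z|}\,dx\,dz\le C\left(|K\omega_{y_n}^2|_1^2+|K\omega_{y_n}^2|_{6/5}^2\right),
\end{equation*}
splitting the kernel $|x-z|^{-1}$ into the parts on $\{|x-z|\ge1\}$ and $\{|x-z|<1\}$ and applying Hardy--Littlewood--Sobolev to the latter. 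By \eqref{decayestimate}, $\omega_{y_n}^2(x)\le Ce^{-2\sqrt{V_\infty}|x-y_n|}\le Ce^{-2\sqrt{V_\infty}|y_n|}e^{2\sqrt{V_\infty}|x|}$. Together with $\I Ke^{2\sqrt{V_\infty}|x|}dx<\infty$, this gives $|K\omega_{y_n}^2|_1\le Ce^{-2\sqrt{V_\infty}|y_n|}$. The $L^{6/5}$ norm needs more care, since $K$ is only in $L^2$ and in weighted $L^1$. I would split $\omega_{y_n}^2=\omega_{y_n}^{2\theta}\omega_{y_n}^{2(1-\theta)}$ and interpolate between the weighted $L^1$ bound and $K\in L^2$, accepting any exponential rate $e^{-\kappa|y_n|}$ with $\kappa>\sigma/2$. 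Any such rate makes the Poisson term $O(e^{-2\kappa|y_n|})=o(e^{-\sigma|y_n|})$. If the interpolation proves delicate, the fallback is to restrict to a ball $B_{|y_n|/2}$ and its complement, using the pointwise decay of $\omega_{y_n}$ on the ball and the smallness of the weighted tail of $K$ outside it.

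Combining the two estimates gives $E_A(t_n\omega_{y_n})\le m_0-\frac{c}{8}e^{-\sigma|y_n|}+Ce^{-2\kappa|y_n|}\le m_0-c_2e^{-\sigma|y_n|}$ for $|y_n|\ge M$. Since $t_n\omega_{y_n}\in\mathcal N_A$, this yields $m_A<m_0$.
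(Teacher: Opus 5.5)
Your argument is correct, but it treats the Poisson term differently from the paper. The paper never estimates $\phi_{|\omega_{y_n}|}$ in an integral norm. By Hardy's inequality it bounds $|\phi_{|\omega_{y_n}|}|_\infty\le C|\nabla\omega|_2|K|_2$ uniformly in $n$ (see \eqref{Hardyinequality}). This reduces the Poisson term to the \emph{linear} quantity $\int_{\R}K(x+y_n)\omega^2\,dx=O(e^{-2\sqrt{V_\infty}|y_n|})$, and this is exactly where $\sigma<2\sqrt{V_\infty}$ is used. The paper obtains this decay from Lemma \ref{IntergalLemma} after specializing to $y_n=-nz$; your bound $e^{-2\sqrt{V_\infty}|x-y_n|}\le e^{-2\sqrt{V_\infty}|y_n|}e^{2\sqrt{V_\infty}|x|}$ gives it along any sequence.

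You instead keep the quadratic structure, via HLS or via $\|\phi_{|\omega_{y_n}|}\|_{\mathcal{D}^{1,2}}^2\le S^{-1}|K\omega_{y_n}^2|_{6/5}^2$. The constant there is $S^{-1}$, not $S^{-3}$, which is harmless. The price is an $L^{6/5}$ estimate. The interpolation you only sketch closes with a single H\"older step with exponents $5$ and $5/4$:
\[
\int_{\R}K^{6/5}\omega_{y_n}^{12/5}\,dx\le |K|_2^{2/5}\left(\int_{\R}K\omega_{y_n}^{3}\,dx\right)^{4/5}\le |K|_2^{2/5}\left(|\omega|_\infty\int_{\R}K\omega_{y_n}^{2}\,dx\right)^{4/5}\le Ce^{-\frac{8}{5}\sqrt{V_\infty}|y_n|}.
\]
Hence the Poisson term is $O(e^{-\frac{8}{3}\sqrt{V_\infty}|y_n|})$, and no fallback is needed.

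The paper's route is shorter and avoids any interpolation. Yours gives faster decay of the Poisson term, so it would even tolerate $\sigma<\frac{8}{3}\sqrt{V_\infty}$. Combined with your triangle-inequality treatment of the potential term (in place of the paper's restriction to $B_1(0)$), it also works for an arbitrary sequence $|y_n|\to\infty$.
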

\begin{proof}
  Considering the sequence $u_n = t_n \omega_{y_n}$ constructed in the proof of Lemma \ref{le:0<mAleqm0}, we have
\begin{equation}\label{inizi}
  \begin{aligned}
      E_A(u_n)&=\frac{t_n^2}{2}\Vert\omega_{y_n}\Vert^2+\frac{t_n^4}{4}\int_{\mathbb{R}^3}K(x)\phi_{|\omega_{y_n}|}|\omega_{y_n}|^2dx-\frac{t_n^p}{p}\int_{\mathbb{R}^3}|\omega_{y_n}|^p\,dx\\
      &=E_0(t_n\omega_{y_n})+\frac{t_n^2}{2}\int_{\mathbb{R}^3}\left(|A(x+y_n)|^2+V(x+y_n)-V_\infty\right)|\omega|^2\,dx+\frac{t_n^4}{4}\int_{\mathbb{R}^3}K(x)\phi_{|\omega_{y_n}|}|\omega_{y_n}|^2dx\\
      &\leq m_0+\frac{t_n^2}{2}\left(\int_{\mathbb{R}^3}\left(|A(x+y_n)|^2+V(x+y_n)-V_\infty\right)|\omega|^2\,dx+C\int_{\mathbb{R}^3}K(x)\phi_{|\omega_{y_n}|}|\omega_{y_n}|^2dx\right).
  \end{aligned}
  \end{equation}
 By $(H_2)$, one has
\begin{equation}\label{44}
\begin{aligned}
     &\quad\int_{\mathbb{R}^3}-\left(|A(x+y_n)|^2+V(x+y_n)-V_\infty\right)|\omega|^2\,dx\\
     &\geq  \int_{B_1(0)}-\left(|A(x+y_n)|^2+V(x+y_n)-V_\infty\right)|\omega|^2\,dx\\
     &\geq\inf_{x\in B_1(0)}\left(-\left(|A(x+y_n)|^2+V(x+y_n)-V_\infty\right)\right)\int_{B_1(0)}|\omega|^2\,dx\\
     &\geq Ce^{-\sigma|y_n|}
\end{aligned}
\end{equation}
for some $C>0$ independent of $n$.

On the other hand, by the H\"older inequality and the Hardy inequality, we obtain
\begin{equation}\label{Hardyinequality}
  \begin{aligned}
    \left|\phi_{|\omega_{y_n}|}\right|_\infty&=\frac{1}{4\pi}\sup_{x\in\mathbb{R}^3}\left(\int_{\mathbb{R}^3}\frac{K(z)||\omega_{y_n}(z)|^2}{|x-z|}\,dz\right)\\
    &=\frac{1}{4\pi}\sup_{x\in\mathbb{R}^3}\left(\int_{\mathbb{R}^3}\frac{K(x-z)||\omega_{y_n}(x-z)|^2}{|z|}\,dz\right)\\
    &\leq \frac{|\omega|_\infty}{4\pi}\sup_{x\in\mathbb{R}^3}\left(\int_{\mathbb{R}^3}\frac{K(x-z)||\omega_{y_n}(x-z)|}{|z|}\,dz\right)\\
    &\leq \frac{|\omega|_\infty}{4\pi}\sup_{x\in\mathbb{R}^3}\left(\left(\int_{\mathbb{R}^3}\frac{|\omega_{y_n}(x-z)|^2}{|z|^2}\,dz\right)^\frac{1}{2}\left(\int_{\mathbb{R}^3}|K(x-z)|^2\,dz\right)^\frac{1}{2}\right)\\
    &\leq C|\nabla\omega_{y_n}|_2\left|K\right|_2= C(|\nabla \omega|_2,|K|_2).
\end{aligned}  
\end{equation}
By the decay of $\omega$, we know that $|\omega|^2\leq Ce^{-2\sqrt{V_\infty}|x|}$. Moreover, $K\in L^1(\mathbb{R}^3, \mathbb{R})$ by Remark \ref{integra}, so we want to apply Lemma \ref{IntergalLemma} with $g(x)=e^{-2\sqrt{V_\infty}|x|}$, $h=K$, $\kappa=2\sqrt{V_\infty}$, $\nu=0$ and $\gamma=1$, so that
\begin{align*}
 \quad \int_{\mathbb{R}^3}K(x)e^{2\sqrt{V_\infty}|x|}dx<\infty.
\end{align*}
For this purpose we now choose\footnote{Note that such a choice was not necessary so far.} $y_n$ in a special way: we fix $z\in \R$ with $|z|=1$ and $y_n=-nz$. In this way Lemma \ref{IntergalLemma} reads as
\[
\lim_{n\to \infty}\left(\int_{\R}e^{-2\sqrt{V_\infty}|x|}K(x-nz)dx\right)e^{2\sqrt{V_\infty}|x|}\int_{\R}K(x)e^{-2\sqrt{V_\infty}x\cdot z}dx,
\]
so that
\begin{equation}\label{approx}
\int_{\R}e^{-2\sqrt{V_\infty}|x|}K(x-nz)dx\leq Ce^{-2n\sqrt{V_\infty}}
\end{equation}
for some $C$.
Then, by \eqref{Hardyinequality}, we have
\begin{equation}\label{55}
    \begin{aligned}
        \int_{\mathbb{R}^3}K(x)\phi_{|\omega_{y_n}|}(x)|\omega_{y_n}|^2dx&\leq\left|\phi_{|\omega_{y_n}|}\right|_\infty\int_{\mathbb{R}^3}K(x)|\omega_{y_n}|^2dx\\
        &\leq C\int_{\mathbb{R}^3}K(x+y_n)|\omega|^2dx\\
        &\leq Ce^{-2\sqrt{V_\infty}|y_n|}.
    \end{aligned}
\end{equation}
 Note that $\{t_n\}$ is bounded. Combining \eqref{inizi}, \eqref{44}, \eqref{55} and using the fact that $0<\sigma<2\sqrt{V_\infty}$, we obtain \eqref{ma<m0}, as claimed.
\end{proof}
	
\begin{proof}[{\rm \textbf{Proof of Theorem \ref{leastenergy}}}] 
This proof follows directly from Lemma \ref{le:0<mAleqm0}, Lemma \ref{b>m0} and Lemma \ref{strictlysmaller}.
\end{proof}	

\section{Intertwining solutions }\label{secint}

In this section, we investigate the existence of intertwining solutions under symmetry assumptions $(A_2), (V_2)$ and $(K_2)$.

Let us start introducing the precise setting. $G$ denotes a closed subgroup of the group $O(3)$ of orthogonal transformations on $\R$ and let $\eta:G\rightarrow \mathbb{S}^1$ be a group homomorphism into the unit complex numbers.	The action of $G$ on $\HA$ is given by $(g,u)=u_g$, where
	\begin{equation*}
		u_g(x):=\eta(g)u(g^{-1}x)
	\end{equation*}
is such that
	\begin{equation*}
		\langle u_g,v_g\rangle _A=\langle u,v\rangle _A
	\end{equation*}
	for all $u,v\in\HA$ and $g\in G$. Thus, it is easy to check that the functional $E_A$ is $G$-invariant. Define the fixed point space of the $G$-action as
	\begin{align*}
		\HA^\eta:&=\{u\in \HA:u_g(x)=u(x)\ \forall g\in G, \ \forall x\in\R\}\\
		&=\{u\in \HA:u(gx)=\eta(g)u(x)\ \forall g\in G, \ \forall x\in\R\}.
	\end{align*}
	By the principle of symmetric criticality by Palais \cite{Palais1979}, the critical points of the functional $E_A$ restricted to $\HA^\eta$ are intertwining solutions of \eqref{problem}. Moreover, all non-trivial critical points of $_A$ constrained on $\HA^\eta$ lie on the Nehari manifold
	\begin{equation*}\label{NAeta}
		\mathcal{N}_A^\eta:=\{u\in\HA^\eta\setminus\{0\}:E_A^\prime(u)u=0\}.
	\end{equation*}
It is standard to check that the functional $E_A$ is bounded from below on $\mathcal{N}_A^\eta$, so we can define the least energy
	\begin{equation*}\label{mAeta}
		m_A^\eta:=\inf_{u\in\mathcal{N}_A^\eta}E_A(u).
	\end{equation*}
Again by a classical argument, $m_A^\eta>0$ and $\mathcal{N}_A^\eta$ possesses some properties  similar to Lemma \ref{properties}.
	
Inspired by \cite[Lemma 2.4]{Hirata2008},	we obtain the precise compactness result for symmetric $(PS)$ sequence in our setting.
\begin{lemma}\label{compactness}
		Let $\{u_n\}\subset\HA^\eta$ be  a $(PS)_d$ sequence of the functional $E_A$ with $d>0$. Then the following results hold:
		\begin{enumerate}[{\rm (a)}]
			\item If $G$ is an infinite group satisfying $(G_1)$, then $\{u_n\}$ has a strongly convergent subsequence.
			\item  If $G$ is a finite group satisfying $(G_2)$ and $d<\ell(G)m_0$, then $\{u_n\}$ has a strongly convergent subsequence.
		\end{enumerate}
	\end{lemma}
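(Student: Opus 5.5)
Our plan is to apply the global compactness Lemma \ref{splitting} to the symmetric $(PS)_d$ sequence and then use the $G$-invariance to show that, if compactness fails, each profile is repeated along a whole orbit. Since $\{u_n\}\subset\HA^\eta$ is a $(PS)_d$ sequence for $E_A$, it is also a $(PS)_d$ sequence in the whole space. We argue by contradiction and suppose that no subsequence converges strongly. Then Lemma \ref{splitting} gives $u^0$, $k\geq1$, nontrivial solutions $u^1,\dots,u^k$ of \eqref{limitproblem} and translations $|y_n^j|\to\infty$ with $|y_n^i-y_n^j|\to\infty$ such that
\begin{equation*}
d=E_A(u^0)+\sum_{j=1}^kE_0(u^j).
\end{equation*}
Note that $E_A(u^0)\geq0$: indeed $u^0$ is a critical point, so $E_A(u^0)=\frac14\|u^0\|_A^2+(\frac14-\frac1p)|u^0|_p^p$. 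Also $E_0(u^j)\geq m_0$ for each $j$. The aim is to show that the number $k$ of bubbles is forced to be at least $\#Gx$ for suitable $x\in S^2$, arbitrarily large in case (a) and at least $\ell(G)$ in case (b). In case (a) this contradicts the finiteness of $d$, and in case (b) it gives $d\geq\ell(G)m_0$, contradicting $d<\ell(G)m_0$.

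The key step is the orbit argument. Fix the first bubble $y_n:=y_n^1$ with profile $u^1$, so that $u_n(\cdot+y_n)\rightharpoonup u^1\neq0$ (after removing $u^0$, which is negligible since $|y_n|\to\infty$). For every $g\in G$, the symmetry $u_n(gx)=\eta(g)u_n(x)$ gives
\begin{equation*}
u_n(\cdot+gy_n)=\eta(g)\,u_n(g^{-1}\cdot+y_n)\circ g^{-1}\quad\text{suitably rewritten,}
\end{equation*}
so $u_n(\cdot+gy_n)\rightharpoonup \eta(g)u^1(g^{-1}\cdot)\neq0$. In particular the mass $\int_{B_1(gy_n)}|u_n|^2$ stays bounded below. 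Passing to a subsequence, set $x_n=y_n/|y_n|\to x_*\in S^2$. Take $g_1,\dots,g_m\in G$ such that $g_ix_*$ are pairwise distinct. Then $|g_iy_n-g_jy_n|\geq|y_n|\,(|g_ix_*-g_jx_*|-o(1))\to\infty$. Hence these are $m$ mutually diverging concentration points, each carrying a nontrivial weak limit. By the Brezis--Lieb type splitting in the proof of Lemma \ref{splitting}, applied iteratively by extracting them as successive bubbles (each extracted remainder is still a $(PS)$ sequence of $E_0$ whose weak limit at the next $g_iy_n$ is unchanged, because the previous bubbles are far away), we obtain
\begin{equation*}
d\geq\sum_{i=1}^mE_0\bigl(\eta(g_i)u^1(g_i^{-1}\cdot)\bigr)=mE_0(u^1)\geq m\,m_0.
\end{equation*}
Here we use that $E_0$ is invariant under rotations and unimodular phases.

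In case (a), condition $(G_1)$ allows $m$ to be arbitrary, which is impossible since $d<\infty$. In case (b), we can take $m=\#Gx_*\geq\ell(G)$, which gives $d\geq\ell(G)m_0$, a contradiction. Hence $u_n\to u^0$ strongly.

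The main obstacle is the rigorous justification that these $m$ orbit points each contribute a full $E_0(u^1)$. This amounts to controlling the cross terms between the translated profiles. We plan to handle it by re-running the extraction in Lemma \ref{splitting}, choosing at step $i$ the translations $y_n^i=g_iy_n$ explicitly, and checking that $\delta_i>0$ via the weak convergence established above.
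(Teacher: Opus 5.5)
Your argument is correct. It shares the paper's core idea: a nonvanishing bubble at $y_n$ forces nonvanishing mass at every $gy_n$, and the limit direction $x_*$ of $y_n/|y_n|$ has an orbit of at least $\ell(G)$ points. The counting, however, is done differently.

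The paper keeps the full decomposition of Lemma \ref{splitting}, in which the remainder $u_n-u^0-\sum_i u^i(\cdot-y_n^i)$ tends to $0$ in $H^1$, and then needs only an $L^2$-mass argument. By symmetry, the mass of $u_n-u^0$ on $B_1(gy_n^j)$ is bounded below, so $gy_n^j$ stays at bounded distance from some existing centre $y_n^q$. Hence the set of limit directions $\{r_\infty^j\}$ is $G$-invariant, $k\geq\#Gr_\infty^1\geq\ell(G)$, and $d\geq km_0$ concludes.

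You use only the first bubble and identify the weak limit at each orbit point explicitly as $\eta(g)u^1(g^{-1}\cdot)$. The identity you need is $u_n(\cdot+gy_n)=\eta(g)\,u_n(g^{-1}\cdot+y_n)$; your displayed formula with the extra $\circ g^{-1}$ is garbled. Composition with $g^{-1}$ is an isometry of $H^1$, so the weak limit passes through. You then rebuild a decomposition centred at the mutually diverging points $g_iy_n$ and read off the lower bound from the energy splitting.

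The paper's route is shorter once Lemma \ref{splitting} is available, since it needs no new energy bookkeeping. Yours needs only the first extraction step plus the standard translation splitting for the autonomous functional $E_0$, and it shows that the orbit bubbles are rotated, phase-shifted copies of a single profile with equal energy.

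To close your version, state that the final remainder $v_n$ is a bounded $(PS)_c$ sequence of $E_0$. Then $c=\lim_n\bigl(E_0(v_n)-\tfrac12E_0'(v_n)v_n\bigr)=\lim_n(\tfrac12-\tfrac1p)|v_n|_p^p\geq0$. Together with $E_A(u^0)\geq0$, this gives $d\geq E_A(u^0)+mE_0(u^1)\geq m\,m_0$. No check of $\delta_i>0$ is needed, since you prescribe the translations and already know the weak limits are nonzero.
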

	\begin{proof}
		First, we prove (a).
		From Lemma \ref{PSbounded}, we know that $\{u_n\}$ is bounded. According to Lemma \ref{splitting}, there exists $u^0\in\HA$ such that $u_n\rightharpoonup u^0$ in $\HA$ and $u^0$ is a solution of \eqref{problem}. Suppose by contradiction that $u_n\not\rightarrow u^0$ in $\HA$. It follows from Lemma \ref{splitting} that there exists a number $k\in\mathbb{N}^+$, $k$ sequences of points $(y_n^j)\subset\R$ with $|y_n^j|\rightarrow\infty$ and $|y_n^j-y_n^i|\rightarrow\infty$  for $j\neq i$, $1\leq j\leq k$ and $k$ nontrivial solutions $u^1,u^2,\cdots,u^k\in\H$ of \eqref{limitproblem}
		such that
		
        \begin{equation}\label{strongconvergent}
			u_n-u^0-\sum_{i=1}^{k}u^i(\cdot-y_n^i)\rightarrow 0~ {\rm in}~\H
		\end{equation}
		and
		\begin{equation}\label{d}
			d=	E_A(u^0)+\sum_{i=1}^{k}E_0(u^i).
		\end{equation}
Now, set 
		\begin{equation*}
			r_n^j:=\frac{y_n^j}{|y_n^j|}\in S^{2},\quad\mathrm{for~}j=1,2,\cdots,k.
		\end{equation*}
Thus, there exists $r_\infty^j\in S^{2}$ such that, up to subsequences, $r_n^j\rightarrow r_\infty^j$ as $n\rightarrow\infty$.

We claim that the set $\{r_\infty^j:j=1,2,\cdots,k\}$ is $G$-symmetric. Indeed, since both $u_n$ and $u^0$ belong to $\HA^\eta\subset\HA$,  for any  $j\in\{{1,2,\cdots,k}\}$ and $g\in G$, we know that
		\begin{equation*}
			u_n(gx)=\eta(g)u_n(x) \quad\mathrm{and}\quad u^0(gx)=\eta(g)u^0(x)\quad\mathrm{for}~x\in B_1(y_n^j).
		\end{equation*} 
		Due to the fact that $|y_n^j-y_n^i|\rightarrow\infty$ if $j\neq i$ and since for every $a,b\in\mathbb{R}$ we have $\frac{1}{2}|a|^2\leq |a+b|^2+|b|^2$, there exists a constant $\varrho>0$ such that for every $n\in\N$
		\begin{align*}
			\int_{B_1(y_n^j)}\left|\sum_{i=1}^{k}u^j(\cdot-y_n^i)\right|^2dx
			&\geq \int_{B_1(y_n^j)}\left(\frac{1}{2}|u^j(\cdot-y_n^j)|^2-\left|\sum_{i\neq j}^{k}u^j(\cdot-y_n^i)\right|^2\right)dx\\
			&= \int_{B_1(0)}\left(\frac{1}{2}|u^j|^2-\left|\sum_{i\neq j}^{k}u^j(\cdot+y_n^j-y_n^i)\right|^2\right)dx\\
			&\geq\varrho+o_n(1).
		\end{align*}
		Thus, from \eqref{strongconvergent} one has
		\begin{align*}
			\liminf_{n\rightarrow\infty}\int_{B_1(y_n^j)}|u_n-u^0|^2dx&=\liminf_{n\rightarrow\infty}\int_{B_1(y_n^j)}\left|\sum_{i=1}^{k}u^i(\cdot-y_n^i)\right|^2dx\geq\varrho>0.
		\end{align*}
		Moreover, we also have
\begin{equation}\label{soprasei}
\begin{aligned}
			\liminf_{n\rightarrow\infty}\int_{B_1(y_n^j)}|u_n(gx)-u^0(gx)|^2dx&=\liminf_{n\rightarrow\infty}\int_{B_1(y_n^j)}|\eta(g)u_n(x)-\eta(g)u^0(x)|^2dx\\
			&=	\liminf_{n\rightarrow\infty}\int_{B_1(y_n^j)}|u_n-u^0|^2dx
			\geq\varrho>0.
		\end{aligned}
\end{equation}

Next, we will show that for any $j\in\{1,2,\cdots,k\}$, there exists $q\in\{1,2,\cdots,k\}$ such that the sequence $\{gy_n^j-y_n^q\}$ is bounded. We argue by contradiction, so suppose that for any $q\in\{1,2,\cdots,k\}$ the sequence $\{gy_n^j-y_n^q\}$ is unbounded. Without loss of generality, we assume that $|gy_n^j-y_n^q|\rightarrow\infty$ as $n\rightarrow\infty$ for any $q\in\{1,2,\cdots,k\}$. Recalling that $u_n$ and $u^0$ belong to $\HA^\eta$, by \eqref{soprasei} and \eqref{strongconvergent}, we have 
		\begin{align*}
			0<\varrho\leq\liminf_{n\rightarrow\infty}\int_{B_1(y_n^j)}|u_n(gx)-u^0(gx)|^2dx&=\liminf_{n\rightarrow\infty}\int_{B(y_n^j)}\left|\sum_{i=1}^{k}u^i(gx-y_n^i)\right|^2dx
			\\
			&=\liminf_{n\rightarrow\infty}\int_{B_1(0)}\left|\sum_{i=1}^{k}u^i(gx+gy_n^j-y_n^i)\right|^2dx.
		\end{align*}
But $|u^j|\in L^2(\R,\mathbb{R})$ for $1\leq i\leq k$, so we obtain that
		\begin{equation*}
			\liminf_{n\rightarrow\infty}\int_{B_1(0)}\left|\sum_{i=1}^{k}u^i(gx+gy_n^j-y_n^i)\right|^2dx=0,
		\end{equation*}
which is a contradiction. Hence,  there exists $q\in\{1,2,\cdots,k\}$ such that the sequence $\{gy_n^j-y_n^q\}$ is bounded. It follows from $|y_n^j|\rightarrow\infty$ and $|y_n^q|\rightarrow\infty$ that
		\begin{equation*}
			\lim_{n\rightarrow\infty}\dfrac{|gy_n^j-y_n^q|}{|y_n^j|}=0\quad\mathrm{and}\quad \lim_{n\rightarrow\infty}\dfrac{|gy_n^j-y_n^q|}{|y_n^q|}=0.
		\end{equation*}
		Then one has
		\begin{equation*}
			\lim_{n\rightarrow\infty}\dfrac{|y_n^j|}{|y_n^q|}=\lim_{n\rightarrow\infty}\left|g\dfrac{y_n^j}{|y_n^q|}\right|=\lim_{n\rightarrow\infty}\left|\dfrac{y_n^q}{|y_n^q|}\right|=1,
		\end{equation*}
but also
\[
\lim_{n\to\infty}\frac{gy_n^j}{|y_n^j|}=\lim_{n\to\infty}\frac{y_n^q}{|y_n^j|} \mbox{ and }\lim_{n\to\infty} \dfrac{gy_n^j}{|y_n^q|}=\lim_{n\to\infty}\dfrac{y_n^q}{|y_n^q|}.
\]
Therefore, we obtain that
		\begin{equation*}
			gr_\infty^j=\lim_{n\to\infty}g\frac{y_n^j}{|y_n^j|}=\lim_{n\to\infty}\frac{gy_n^j}{|y_n^j|}=\lim_{n\to\infty}\frac{y_n^q}{|y_n^j|}=\lim_{n\to\infty}\frac{|y_n^q|}{|y_n^j|}\frac{y_n^q}{|y_n^q|}=r_\infty^q,
		\end{equation*}
which proves that the set $\{r_\infty^j:j=1,2,\cdots,k\}$ is $G$-symmetric. Then we know that
		\begin{equation*}
			\ell(G)\leq\min\{\#Gr_\infty^j:1\leq j\leq k\}\leq\#\{r_\infty^q:1\leq q\leq k\}\leq k<\infty.
		\end{equation*}
		This is a contradiction with $\ell(G)=\infty$, which concludes the proof of (a). 
		
Next, we show (b). Starting as before, since  $u^0$ is a solution of \eqref{problem}, then either $u^0=0$ or $u$ belongs to the Nehari manifold. In any case, $E_A(u^0)\geq0$.
Since $E_0(u^i)\geq m_0$, from \eqref{d} we have
		\begin{equation*}
			d=	E_A(u^0)+\sum_{i=1}^{k}E_0(u^i)\geq km_0\geq \ell(G)m_0,
		\end{equation*}
		which is contradiction with $d<\ell(G)m_0$ and the proof of (b) is completed.
	\end{proof}

    	\begin{proof}[{\rm \textbf{ Proof of Theorem \ref{existence1}}}]
		Using Lemma \ref{compactness} (a) and an argument  similar to that in the proof of Lemma \ref{b>m0}, the functional $E_A$ has a nontrivial critical point $u\in\HA^\phi$ with $E_A(u)=m_A^\eta$. 
	\end{proof}

At this point, we recall that the $\mathbb{S}^1$-action on $\HA^{\eta}$ is defined by scalar multiplication of unit complex number $u\mapsto e^{i\theta}u$  for any $\theta\in\mathbb{R}/2\pi\mathbb{Z}$ and the $\mathbb{S}^1$-orbit of $u$  is $\left\{e^{i\theta}u:\theta\in\mathbb{R}/2\pi\mathbb{Z}\right\}$. Of course, the functional $E_A$ is invariant under the $\mathbb{S}^1$-action.

In the following, we give a modified version of $\mathbb{S}^1$-symmetric mountain pass theorem  from \cite[Theorem 1.5]{CP1991}.
\begin{proposition}
\label{prop:smp} 
Let $X$ be a infinite dimension Banach space with a continuous group action of $\mathbb{S}^1$ and $I$ be a $\mathbb{S}^1$-invariant functional which satisfies $(PS)_d$ condition, $d>0$. Suppose that the functional $I$ satisfies the following mountain pass structure:
\begin{enumerate}[{\rm (MP$_1$)}]
    \item $I(0)=0$ and there exists a linear subspace $\hat{X}$ of $X$ of finite codimension and  $r>0$ such that $I(u)>0$ for all $u\in \hat{X}$ with $\Vert u\Vert=r$;
    \item For each finite dimensional subspace $\hat{E}$ of $X$, there is $r(E)>0$ such that $I(u)\leq 0$ for all $u\in E$ with $\Vert u\Vert\geq r(   E)$;
    \item $I(u)\leq 0$ for all $u\in {\rm Fix}(\mathbb{S}^1):=\{u\in X:gu=u~\mathrm{for~all}~g\in\mathbb{S}^1\}$.
\end{enumerate}
    Then functional $I$ possesses a sequence of critical values which is unbounded above.
\end{proposition}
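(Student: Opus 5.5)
The plan is to run the classical equivariant minimax scheme of Bartolo, Benci and Fortunato, in the form used in \cite[Theorem 1.5]{CP1991}. It relies on an $\mathbb{S}^1$-equivariant index $\gamma$, for instance the Fadell--Rabinowitz cohomological index or the Benci $\mathbb{S}^1$-genus. This index is defined on closed $\mathbb{S}^1$-invariant sets $Z\subset X$ with $Z\cap{\rm Fix}(\mathbb{S}^1)=\emptyset$ and satisfies:
\begin{itemize}
\item[(i)] monotonicity under equivariant maps;
\item[(ii)] subadditivity;
\item[(iii)] continuity: a compact invariant set has an invariant neighbourhood with the same finite index;
\item[(iv)] normalization: for a finite dimensional invariant subspace $E$ with $E\cap{\rm Fix}(\mathbb{S}^1)=\{0\}$, the unit sphere of $E$ has index $\tfrac12\dim_{\mathbb{R}}E$;
\item[(v)] intersection: if $\gamma(Z)>{\rm codim}\,\hat X$ (counted in the same units), then $Z$ meets every sphere $\{\|u\|=r\}$ of an invariant subspace $\hat X$.
\end{itemize}
Assumption (MP$_3$) is exactly what makes this index usable. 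At any positive level, the relevant sets and the critical sets stay away from ${\rm Fix}(\mathbb{S}^1)$, because $I\le0$ there.

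The minimax values are built as follows. I first replace $\hat X$ by an invariant finite-codimensional subspace on which $I>0$ on the $r$-sphere. In the application $\hat X$ can be taken to be all of $X$, since $I$ is positive on small spheres; in general one averages the defining functionals over $\mathbb{S}^1$. Let $\Gamma$ be the group of equivariant homeomorphisms $h$ of $X$ with $h(u)=u$ wherever $I(u)\le 0$. For an invariant set $A$, define the Benci pseudo-index
\[
i^*(A):=\min_{h\in\Gamma}\gamma\bigl(h(A)\cap \{u\in\hat X:\|u\|=r\}\bigr),
\]
and set
\[
c_k:=\inf\{\sup_A I:\ A \text{ compact invariant},\ i^*(A)\ge k\}.
\]

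First, $c_k\ge \inf_{\hat X\cap\{\|u\|=r\}}I>0$ for every admissible $A$. This holds because $i^*(A)\ge k\ge1$ forces $h(A)$ to meet the $r$-sphere of $\hat X$, on which $I>0$.

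Second, $c_k<\infty$. Here I need invariant finite dimensional subspaces $E_k$ with $E_k\cap{\rm Fix}(\mathbb{S}^1)=\{0\}$ and $\dim E_k\to\infty$. These exist because $X$ is infinite dimensional and, in our application $X=\HA^\eta$, we have ${\rm Fix}(\mathbb{S}^1)=\{0\}$; in general one works in a complement of ${\rm Fix}(\mathbb{S}^1)$. By (MP$_2$) we have $I\le0$ on $E_k\setminus B_{r(E_k)}$, so every $h\in\Gamma$ fixes $\partial(E_k\cap B_{r(E_k)})$. A degree/Borsuk-type argument combined with the intersection property (v) then gives $i^*(\overline{B_{r(E_k)}}\cap E_k)\ge \dim E_k/2-{\rm codim}\,\hat X$. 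This is $\ge k$ for large $\dim E_k$, so $\sup I$ on this ball bounds $c_k$.

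Third, each $c_k$ is a critical value. If not, the equivariant deformation lemma, available thanks to $(PS)_{c_k}$ and the fact that $c_k>0$, gives an $\eta\in\Gamma$ pushing $I^{c_k+\varepsilon}$ into $I^{c_k-\varepsilon}$. Moreover $\eta$ keeps admissibility, since $i^*(\eta(A))\ge i^*(A)$. This contradicts the definition of $c_k$.

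Finally, unboundedness. Suppose $c_k\uparrow c<\infty$. Then the critical set $K_c$ is compact by (PS), invariant, and disjoint from ${\rm Fix}(\mathbb{S}^1)$, because $c>0$ and (MP$_3$) holds. By continuity (iii), $K_c$ has a neighbourhood $N$ with $\gamma(N)=\gamma(K_c)=:q<\infty$. Now take $A$ admissible for $c_{k+q}$ with $\sup_A I<c+\varepsilon$, and deform $A\setminus N$ below $c-\varepsilon$. Subadditivity (ii) gives $i^*(\overline{A\setminus N})\ge k$, hence $c_k\le c-\varepsilon$ for all large $k$. This contradicts $c_k\to c$. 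The same argument shows that repeated values $c_k=c_{k+q}$ yield $\gamma(K_c)\ge q+1$, hence infinitely many orbits, so in any case $I$ has a sequence of critical values unbounded above.

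I expect the main obstacle to be the finiteness step. It requires producing invariant subspaces free of fixed points and proving the pseudo-index lower bound on them. That bound needs the intersection property of the $\mathbb{S}^1$-index, which is a Borsuk--Ulam theorem for free $\mathbb{S}^1$ actions. I would quote it from \cite{CP1991} (equivalently from Fadell--Rabinowitz) rather than reprove it.
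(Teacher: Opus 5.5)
The paper gives no proof of this proposition; it is presented as a modified version of \cite[Theorem 1.5]{CP1991} and simply quoted. Your Benci pseudo-index scheme is the standard proof behind results of this type. Its ingredients are minimax classes with $i^*(A)\ge k$, an equivariant deformation, and the continuity and subadditivity argument at a finite limit level. In the setting where the proposition is actually used, all of your steps go through, provided you quote the intersection lemma from the literature as you propose. That setting is the proof of Theorem \ref{multiplicity}: $X=\HA^\eta$, the linear isometric action $u\mapsto e^{i\theta}u$, ${\rm Fix}(\mathbb{S}^1)=\{0\}$, $\hat X=X$, and $E_A\ge\alpha>0$ on the $\rho$-sphere. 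What your reconstruction adds over the bare citation is that it shows exactly which hypotheses are used.

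However, the reductions you claim ``in general'' do not hold, and they show that the statement as printed is weaker than what the argument needs.

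First, averaging cannot make $\hat X$ invariant. Take rotations $g_\theta$ on $L^2(\mathbb{S}^1,\mathbb{R})$ and $\hat X=\psi^\perp$, where every Fourier coefficient of $\psi$ is nonzero. Then the largest invariant subspace contained in $\hat X$, namely $\bigcap_\theta g_\theta\hat X$, is $\{0\}$. Averaging $u\mapsto\langle u,\psi\rangle$ over $\mathbb{S}^1$ gives a nonzero multiple of $u\mapsto\int u$, whose kernel is not contained in $\hat X$, so positivity of $I$ on its sphere is lost. Since $\gamma$ is defined only on invariant sets, $\gamma(h(A)\cap\hat X\cap\{\|u\|=r\})$ makes sense only if $\hat X$ is invariant, so this must be assumed. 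For the scalar action, $\hat X\cap i\hat X$ is a legitimate finite-codimensional replacement.

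Likewise, the codimension estimate behind your property (v) maps into the sphere of an invariant complement of $\hat X$, and that complement must contain no fixed points. This holds when ${\rm Fix}(\mathbb{S}^1)=\{0\}$, but it is not implied by (MP$_1$)--(MP$_3$). ``Working in a complement of ${\rm Fix}$'' does not repair it.

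Second, (MP$_1$) only gives $I>0$ pointwise on $\hat X\cap\{\|u\|=r\}$. In infinite dimensions this does not give a positive infimum, so your first step yields only $c_k\ge0$. At level $0$ neither $(PS)$ nor a deformation fixing $\{I\le0\}$ is available.

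Third, invariant subspaces, the normalization (iv), and the equivariant pseudo-gradient field in the deformation lemma all require a linear, isometric action, not merely a continuous one.

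You should therefore state these as hypotheses rather than claim they can be arranged: $\hat X$ invariant with a fixed-point-free complement, $I\ge\alpha>0$ on its $r$-sphere, and a linear isometric action. Since all of these hold in the application, the paper's use of the proposition is unaffected.
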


		\begin{proof}[{\rm \textbf{ Proof of Theorem \ref{multiplicity}}}]
	 From \cite{CCS2012}, we know that $\HA^\eta$ is an infinite dimension space.	  By Lemma \ref{compactness}, the functional $E_A$ satisfies the $(PS)_d$ condition for $d>0$. Recalling the action of 
 $\mathbb{S}^1$ on 	$\HA^\eta$ and the fact  that the functional  $E_A$ is an $\mathbb{S}^1$-invariant functional, we can consider the fixed point set under the $\mathbb{S}^1$ action
    \begin{equation*}
{\rm Fix}(\mathbb{S}^1)=\{u\in\HA^\eta:gu=u,~\forall g\in\mathbb{S}^1\}=\{0\}.
    \end{equation*}
 In particular,  (MP$_3$) holds. Next, we  just need to show that  the functional $E_A$ satisfies the mountain pass conditions (MP$_1$) and (MP$_2$).

First, from the continuous embedding
	$\HA\hookrightarrow L^t(\mathbb{R}^3,\mathbb{C})$  for $t\in[2,6]$, we have
\begin{equation*}
    E_A(u)
		=\dfrac{1}{2}\|u\|_A^2+\frac{1}{4}\int_{\mathbb{R}^3}K(x)\phi_{|u|}|u|^2\,dx-\dfrac{1}{p}\int_{\mathbb{R}^3}|u|^pdx\geq \dfrac{1}{2}\|u\|_A^2-\dfrac{C_p}{p}\|u\|_A^p.
\end{equation*}
Therefore,  there exist $\alpha>0$ and sufficiently small $\rho>0$ such that $E_A(u)\geq\alpha>0$ for $u\in\HA^\eta$ and $\Vert u\Vert_A=\rho$, and so (MP$_1$) holds.

 Then, let $\hat{E}$ be  a finite dimensional subspace of $\HA^\eta$.    Since all norms in $\hat{E}$ are equivalent, then there exists $\gamma_p>0$ such that
    \begin{equation*}
        \gamma_p\Vert u\Vert_A\leq \Vert u\Vert_p,\quad \mathrm{for}~u\in E.
    \end{equation*}
    For $u\in \hat{E}\setminus\{0\}$, using \eqref{S-1}, we have
\begin{equation*}
    \begin{aligned}
            E_A(u)
		&=\dfrac{1}{2}\|u\|_A^2+\frac{1}{4}\int_{\mathbb{R}^3}K(x)\phi_{|u|}|u|^2\,dx
        -\dfrac{1}{p}\int_{\mathbb{R}^3}|u|^pdx\\
        &\leq  \dfrac{1}{2}\|u\|_A^2+\dfrac{1}{4}S^{-3}\left|K\right|_2^2\Vert u\Vert_A^4-\dfrac{(\gamma_p)^p}{p} \|u\|_A^p, 
    \end{aligned}
    \end{equation*}
    which yields that $E_A(u)\rightarrow-\infty$ as $\Vert u\Vert_A\rightarrow\infty$. Hence, (MP$_2$) holds true.

In conclusion, we apply Proposition \ref{prop:smp} to obtain a critical point sequence $\{u_n\}$ of \eqref{problem} satisfying $E_A(u_n)\rightarrow\infty$ as $n\rightarrow\infty$ and these critical points are geometrically distinct (see Definition \ref{dist}) because of the difference of their functional values.
	\end{proof}

Finally, we consider a finite group $G$ satisfying  $(G_2)$. By constructing a proper test--function and estimating its energy, we obtain the range of the least energy of \eqref{problem} and give the proof of Theorem \ref{existence2}.
    
Fix a positive constant
\begin{equation}\label{alfa}
 \epsilon\in\left(0,\frac{d_G\sqrt{V_\infty}-\alpha}{d_G\sqrt{V_\infty}+\alpha}\right),
 \end{equation}
 where $\alpha$ comes from assumption $(H_3)$ and $d_G$ is defined as \eqref{dG}.
Take a smooth cut-off function $f\in C^\infty([0,\infty),[0,1])$ such that $f(t)\equiv1$ for $t\leq 1-\epsilon$ and $f(t)\equiv0$ for $t\geq 1$\footnote{We could also require that $|f'|\leq 2/\epsilon$, though we will not need this condition.}. Recalling that $\omega$ is a positive least energy solution of \eqref{limitproblemR} which is radially symmetric and decreasing in the radial direction, for $R>0$ we set
	\begin{equation*}
		\omega^R(x):=f\left(\dfrac{|x|}{R}\right)\omega(x).
	\end{equation*}
	
	\begin{lemma}\label{deacyestimate1}
		As $R\rightarrow\infty$, the following estimates hold:
		\begin{equation*}
			\I \left||\nabla\omega^R|^2-|\nabla\omega|^2\right|dx=O(e^{-2\sqrt{V_\infty}R(1-\epsilon)}),
		\end{equation*}
		\begin{equation*}
			\I \left||\omega^R|^p-|\omega|^p\right|dx=O(e^{-p\sqrt{V_\infty}R(1-\epsilon)}).
		\end{equation*}
	\end{lemma}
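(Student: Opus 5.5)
The plan is to exploit the fact that $\omega^R$ and $\omega$ coincide on the ball $B_{R(1-\epsilon)}(0)$. All the differences are therefore supported in the complement $\{|x|\geq R(1-\epsilon)\}$, and there the decay estimate \eqref{decayestimate} for $\omega$ and $\nabla\omega$ controls everything. No information about $A,V,K$ is needed; this is purely a statement about the real ground state $\omega$.

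\textbf{Second estimate.} Since $0\le f\le 1$, we have $|\omega^R|\le|\omega|$ pointwise, so
\[
\I\left||\omega^R|^p-|\omega|^p\right|dx\le \int_{|x|\ge R(1-\epsilon)}|\omega|^p\,dx .
\]
By \eqref{decayestimate} this is bounded by
\[
C\int_{R(1-\epsilon)}^\infty (1+r)^{-p}e^{-p\sqrt{V_\infty}r}r^2\,dr .
\]
Because $p>4>2$, the factor $(1+r)^{-p}r^2$ is bounded, and the integral is $O(e^{-p\sqrt{V_\infty}R(1-\epsilon)})$.

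\textbf{First estimate.} Write
\[
\nabla\omega^R=f\!\left(\tfrac{|x|}{R}\right)\nabla\omega+R^{-1}f'\!\left(\tfrac{|x|}{R}\right)\tfrac{x}{|x|}\,\omega .
\]
On $\{|x|\ge R(1-\epsilon)\}$ this gives
\[
\left||\nabla\omega^R|^2-|\nabla\omega|^2\right|\le |\nabla\omega^R|^2+|\nabla\omega|^2\le 3|\nabla\omega|^2+2R^{-2}|f'|_\infty^2\,\omega^2 ,
\]
and the difference vanishes elsewhere. Here $|f'|_\infty$ is a fixed constant depending only on $\epsilon$, which is fixed in \eqref{alfa}, so the term with $f'$ is harmless (it even gains the factor $R^{-2}$). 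Integrating over $|x|\ge R(1-\epsilon)$ and using \eqref{decayestimate} with $i=0,1$ reduces both contributions to
\[
C\int_{R(1-\epsilon)}^\infty (1+r)^{-2}e^{-2\sqrt{V_\infty}r}r^2\,dr\le C\int_{R(1-\epsilon)}^\infty e^{-2\sqrt{V_\infty}r}\,dr=O(e^{-2\sqrt{V_\infty}R(1-\epsilon)}).
\]

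There is no serious obstacle. The only point that needs care is bounding the polynomial factor $r^2(1+r)^{-q}$ uniformly, which works since $q\ge2$. One should also note that the constant in the $O(\cdot)$ depends on $\epsilon$ through $|f'|_\infty$, but $\epsilon$ is fixed in advance. If one preferred a sharper polynomial prefactor, the same tail integrals could be computed exactly. That is not needed for the claimed bounds.
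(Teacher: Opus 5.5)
Your proposal is correct and follows the paper's proof essentially step for step. Both arguments restrict to $\{|x|\ge R(1-\epsilon)\}$, bound the integrand by $|\nabla\omega^R|^2+|\nabla\omega|^2$ (resp. by $|\omega|^p$-type terms), invoke \eqref{decayestimate}, and absorb the factor $r^2(1+r)^{-q}$ into a constant before integrating the exponential. Your explicit remark that the implied constant depends on the fixed $\epsilon$ through $|f'|_\infty$ is a detail the paper leaves implicit.
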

	\begin{proof}
		It follows from	 \eqref{decayestimate} that as $R\rightarrow\infty$,  we have
		\begin{equation*}
			\begin{aligned}
				\I \left||\nabla\omega^R|^2-|\nabla\omega|^2\right|dx
				&=\int_{|x|\geq R(1-\epsilon)} \left||\nabla\omega^R|^2-|\nabla\omega|^2\right|dx
				\\
				&\leq \int_{|x|\geq R(1-\epsilon)} \left|\dfrac{1}{R}\omega f'\left(\dfrac{|x|}{R}\right)\frac{x}{|x|}+f\left(\dfrac{|x|}{R}\right)\nabla\omega\right|^2dx+\int_{|x|\geq R(1-\epsilon)}|\nabla\omega|^2dx\\
				&\leq C\int_{|x|\geq R(1-\epsilon)}(1+|x|)^{-2}e^{-2\sqrt{V_\infty}|x|}dx\\
				&=C\int_{R(1-\epsilon)}^{\infty}r^2(1+r)^{-2}e^{-2\sqrt{V_\infty}r}dr\\
				&\leq 	C\int_{R(1-\epsilon)}^{\infty}e^{-2\sqrt{V_\infty}r}dr=O(e^{-2\sqrt{V_\infty}R(1-\epsilon)})
			\end{aligned}
		\end{equation*}
		and similarly
		\begin{equation*}
			\begin{aligned}
				\I \left||\omega^R|^p-|\omega|^p\right|dx&=\int_{|x|\geq R(1-\epsilon)}\left||\omega^R|^p-|\omega|^p\right|dx\\
				&\leq \int_{|x|\geq R(1-\epsilon)}\left|f\left(\dfrac{|x|}{R}\right)\omega\right|^pdx+\int_{|x|\geq R(1-\epsilon)}\left|\omega\right|^pdx\\
				&\leq C\int_{|x|\geq R(1-\epsilon)}(1+|x|)^{-p}e^{-p\sqrt{V_\infty}|x|}dx\\
				&=C\int_{R(1-\epsilon)}^{\infty}r^2(1+r)^{-p}e^{-p\sqrt{V_\infty}r}dr\\
				&\leq 	C\int_{R(1-\epsilon)}^{\infty}e^{-p\sqrt{V_\infty}r}dr=O(e^{-p\sqrt{V_\infty}R(1-\epsilon)}),
			\end{aligned}
		\end{equation*}
since $p>4$. 
	\end{proof}
	
Now, define
	\begin{equation}\label{lambdaR}
		\lambda_R:=\left(\dfrac{\alpha+d_G\sqrt{V_\infty}}{2d_G\sqrt{V_\infty}}\right)\dfrac{d_G}{2}R<\dfrac{d_G}{2}R.
	\end{equation} 
	
	\begin{lemma}\label{symmetricmA<m0}
    Suppose that $(H_3)$ holds. Then
		there exist $c_3>0$ and $\xi>0$ such that for $j=1,2,\cdots,\ell(G)$,  
		\begin{equation*}
			E_A(t\omega^{\lambda_R}(\cdot-Re_j))\leq m_0-c_3e^{-\alpha R}, \quad\mathrm{for}~t\geq 0~\mathrm{and}~R\geq\xi,
\end{equation*}
where $e_j$ is defined in \eqref{e}.
	\end{lemma}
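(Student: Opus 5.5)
The plan is to split $E_A(tu_R)$, with $u_R:=\omega^{\lambda_R}(\cdot-Re_j)$, into the autonomous energy of the truncated ground state plus two perturbation terms, and to check that the negative potential term of order $e^{-\alpha R}$ dominates every positive error. Since $e_j\in Gx_0$ is an arbitrary point of the orbit and all constants below depend only on $|e_j|=1$, it suffices to treat a fixed $j$. The function $u_R$ is real-valued and supported in $B_{\lambda_R}(Re_j)$.

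For real $u$ we have $\Re\big((-i\nabla u)\cdot A\,u\big)=0$, hence $|\nabla_Au|^2=|\nabla u|^2+|A|^2u^2$. Translating, this gives
\begin{equation*}
E_A(tu_R)=E_0(t\omega^{\lambda_R})+\frac{t^2}{2}\int_{\mathbb{R}^3}\big(|A|^2+V-V_\infty\big)u_R^2\,dx+\frac{t^4}{4}\int_{\mathbb{R}^3}K\phi_{|u_R|}u_R^2\,dx .
\end{equation*}
I would bound the three terms separately.

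\textbf{(i) The autonomous term.} By Lemma \ref{deacyestimate1}, together with the analogous trivial estimate for the $L^2$ term, both $\|\omega^{\lambda_R}\|_\bullet^2$ and $|\omega^{\lambda_R}|_p^p$ differ from the corresponding quantities for $\omega$ by $O(e^{-2\sqrt{V_\infty}\lambda_R(1-\epsilon)})$. The explicit formula
\begin{equation*}
\max_{t>0}E_0(tv)=\Big(\frac12-\frac1p\Big)\big(\|v\|_\bullet^2\big)^{\frac{p}{p-2}}\big(|v|_p^p\big)^{-\frac{2}{p-2}}
\end{equation*}
then yields $\max_{t\ge0}E_0(t\omega^{\lambda_R})\le m_0+Ce^{-2\sqrt{V_\infty}\lambda_R(1-\epsilon)}$.

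\textbf{(ii) The potential term.} On the support of $u_R$ we have $|x|\le R+\lambda_R$, so $(H_3)$ makes the integrand nonpositive there. On $B_1(Re_j)$, where $\omega^{\lambda_R}(\cdot-Re_j)=\omega(\cdot-Re_j)$ for $R$ large, the integrand is at most $-c_1e^{-\alpha(R+1)}\omega^2(\cdot-Re_j)$. Hence the term is at most $-Ct^2e^{-\alpha R}$.

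\textbf{(iii) The Poisson term.} As in \eqref{Hardyinequality}, $|\phi_{|u_R|}|_\infty\le C$ uniformly in $R$. Using $u_R^2\le\omega^2(\cdot-Re_j)\le Ce^{-2\sqrt{V_\infty}|\cdot-Re_j|}$, Lemma \ref{IntergalLemma} (applied exactly as in the proof of Lemma \ref{strictlysmaller}, with $\rho=R$ and $z=-e_j$) gives a bound $Ct^4e^{-2\sqrt{V_\infty}R}$.

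To handle $t$, I would first fix $T>0$ such that $E_A(tu_R)\le 0$ for $t\ge T$ and all large $R$. This is possible because $|u_R|_p^p\to|\omega|_p^p$ while the other terms stay bounded, and $p>4$. I would then fix $t_0>0$ small so that $E_A(tu_R)\le Ct^2\le m_0/2$ for $t\le t_0$; on both ranges the claim is trivial for $R$ large. For $t\in[t_0,T]$, steps (i)--(iii) give
\begin{equation*}
E_A(tu_R)\le m_0+Ce^{-2\sqrt{V_\infty}\lambda_R(1-\epsilon)}-Ct_0^2e^{-\alpha R}+CT^4e^{-2\sqrt{V_\infty}R}.
\end{equation*}

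The key point, and the only real obstacle, is the comparison of exponents in this last inequality. By \eqref{lambdaR},
\begin{equation*}
2\sqrt{V_\infty}\lambda_R(1-\epsilon)=\frac{(1-\epsilon)(\alpha+d_G\sqrt{V_\infty})}{2}R,
\end{equation*}
and this exceeds $\alpha R$ exactly when $\epsilon<\frac{d_G\sqrt{V_\infty}-\alpha}{d_G\sqrt{V_\infty}+\alpha}$, which is the choice made in \eqref{alfa}. Moreover $2\sqrt{V_\infty}>\alpha$, since $\alpha<d_G\sqrt{V_\infty}\le2\sqrt{V_\infty}$. So both error terms are $o(e^{-\alpha R})$, and we obtain the claim with $c_3=Ct_0^2/2$ for all $R\ge\xi$, for a suitable threshold $\xi$.
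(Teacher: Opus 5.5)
Your proposal is correct and follows the paper's proof in substance. It uses the same splitting of $E_A(tu_R)$ into three pieces: the truncated autonomous energy (Lemma \ref{deacyestimate1}), the negative $(H_3)$ term of order $e^{-\alpha R}$, and the Poisson term of order $e^{-2\sqrt{V_\infty}R}$ (uniform $L^\infty$ bound on $\phi_{|u_R|}$ plus Lemma \ref{IntergalLemma}); it then closes with the exponent comparison encoded in \eqref{alfa}. The differences are bookkeeping: you dispose of small and large $t$ trivially instead of confining the Nehari projection $t_R$ to a fixed interval $[t_1,t_2]$, you restrict the potential term directly to $B_1(Re_j)$ (avoiding the paper's intermediate replacement of $\omega^{\lambda_R}$ by $\omega$ under a nonpositive weight, an inequality that goes the wrong way as written), and your identity $2\sqrt{V_\infty}\lambda_R(1-\epsilon)=\frac{(1-\epsilon)(\alpha+d_G\sqrt{V_\infty})}{2}R$ settles the exponent comparison more cleanly than the paper's chain.
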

	\begin{proof}
    Since $\omega$ is a real-valued function, then $\omega\in\HA$.
By Lemma \ref{properties} (d), there exists $t_R>0$ such that $t_R\omega^{\lambda_R}(\cdot-Re_j)\in\mathcal{N}_A$, so that we can easily find
\[
    t_R^{p-2}=\frac{\displaystyle \|\omega^{\lambda_R}(\cdot-Re_j)\|_A^2+t_R^2\int_{\R}K(x)\phi_{|\omega^{\lambda_R}(\cdot-Re_j)|}|\omega^{\lambda_R}(x-Re_j)|^2dx}{|\omega^{\lambda_R}(\cdot-Re_j)|_p^p}
\]
As $R\to\infty$ (and so $\lambda_R\to \infty$, as well), we have, after a change of variables in the related integrals,
\begin{equation*}
\|\omega^{\lambda_R}(\cdot-Re_j)\|_A^2\to \|\omega\|^2_\bullet,\quad |\omega^{\lambda_R}(\cdot-Re_j)|_p^p\to |\omega|_p^p,
\end{equation*}
and 
\begin{equation*}
    \int_{\R}K(x)\phi_{|\omega^{\lambda_R}(\cdot-Re_j)|}|\omega^{\lambda_R}(x-Re_j)|^2dx\to0.
\end{equation*}

From this, it is easy to verify that there exist $t_2>t_1>0$ independent of $R$ such that $t_R\in[t_1,t_2]$. Thus,
		\begin{equation}\label{eq:maxmax}
			\max_{t\geq0}E_A(t\omega^{\lambda_R}(\cdot-Re_j))=E_A(t_R\omega^{\lambda_R}(\cdot-Re_j))=\max_{t\in[t_1,t_2]}E_A(t\omega^{\lambda_R}(\cdot-Re_j))
		\end{equation}
		for $R$ large enough.
For $t\in[t_1,t_2]$ and sufficiently large $R$, so that $\lambda_R>1$, by $(H_3)$ one has
\begin{equation}\label{nomina}
			\begin{aligned}
				&\quad\I \left(|A(x)|^2+V(x)-V_\infty\right)(t\omega^{\lambda_R}(x-Re_j))^2dx\\
				&=\int_{|x|\leq\lambda_R} \left(|A(x+Re_j)|^2+V(x+Re_j)-V_\infty\right)(t\omega^{\lambda_R})^2dx\\
				&\leq\int_{|x|\leq\lambda_R} \left(|A(x+Re_j)|^2+V(x+Re_j)-V_\infty\right)(t\omega)^2dx\\
				&\leq-c_1t_1^2\int_{|x|\leq\lambda_R}e^{-\alpha|x+Re_j|}\omega^2 dx\\
				&\leq-c_1t_1^2\left(\int_{|x|\leq 1}e^{-\alpha|x|}\omega^2 dx\right)e^{-\alpha R}\\
				&\leq -O(e^{-\alpha R}).
			\end{aligned}
\end{equation}
        Besides, $(H_3)$ implies that $K\in L^1(\mathbb{R}^3, \mathbb{R})$. Now we apply Lemma \ref{IntergalLemma} with $g(x)=e^{-2\sqrt{V_\infty}|x|}$, $h=K$, $\kappa=2\sqrt{V_\infty}$ and $\nu=0$, so that
\begin{align*}
    \gamma=1,\quad \mbox{ and }\quad \int_{\mathbb{R}^3}K(x)e^{2\sqrt{V_\infty}|x|}<\infty.
\end{align*}
Therefore, for $R$ large enough, by \eqref{decayestimate} and reasoning as for \eqref{approx}, we have
\begin{equation}\label{cambi}
    \int_{\mathbb{R}^3}K(x+Re_j)|\omega|^2\,dx\leq C\int_{\mathbb{R}^3}K(x+Re_j)e^{-2\sqrt{V_\infty}|x|}\leq Ce^{-2\sqrt{V_\infty}R}=O(e^{-2\sqrt{V_\infty}R}).
\end{equation}
Similarly to \eqref{Hardyinequality}, we also know that $$|\phi_{|\omega(\cdot-Re_j)|}|_\infty\leq C.$$ Then from \eqref{cambi} one has
\begin{equation}\label{sop}
    \begin{aligned}
        \int_{\mathbb{R}^3}K(x)\phi_{|\omega^{\lambda_R}(x-Re_j)|}{|\omega^{\lambda_R}(x-Re_j)|}^2dx&\leq \int_{\mathbb{R}^3}K(x)\phi_{|\omega(\cdot-Re_j)|}{|\omega(x-Re_j)|}^2dx\\
        &\leq\left|\phi_{|\omega(\cdot-Re_j)|}\right|_\infty\int_{\mathbb{R}^3}K(x)|\omega(x-Re_j)|^2dx\\
        &\leq C\int_{\mathbb{R}^3}K(x+Re_j)|\omega|^2dx=O(e^{-2\sqrt{V_\infty}R}).
    \end{aligned}
\end{equation}
		Thus, it follows from \eqref{nomina}, \eqref{sop} and Lemma  \ref{deacyestimate1} that for $t\in[t_1,t_2]$ and sufficiently large $R$,
\begin{equation}\label{fini}
\begin{aligned}
E_A(t\omega^{\lambda_R}(\cdot-Re_j))&=\dfrac{1}{2}\Vert t\omega^{\lambda_R}(\cdot-Re_j)\Vert_A^2+\dfrac{1}{4}\int_{\mathbb{R}^3}K(x)\phi_{|t\omega^{\lambda_R}(\cdot-Re_j)|}{|t\omega^{\lambda_R}(\cdot-Re_j)|}^2dx\\
&\quad-\dfrac{1}{p}\I |t\omega^{\lambda_R}(\cdot-Re_j)|^pdx\\
&\leq\dfrac{1}{2}\I \left(|\nabla(t\omega)|^2+V_\infty (t\omega)^2\right)dx-\dfrac{1}{p}\I |t\omega|^pdx\\
&\quad +O(e^{-2\sqrt{V_\infty}\lambda_R(1-\epsilon)})-O(e^{-\alpha R})+O(e^{-2\sqrt{V_\infty}R})\\
&\leq\max_{t\geq0}E_0(t\omega)+O(e^{-2\sqrt{V_\infty}\lambda_R(1-\epsilon)})-O(e^{-\alpha R})+O(e^{-2\sqrt{V_\infty}R})\\
&=m_0+O(e^{-2\sqrt{V_\infty}\lambda_R(1-\epsilon)})-O(e^{-\alpha R})+O(e^{-2\sqrt{V_\infty}R}).
\end{aligned}
\end{equation}
By $(H_3)$ and \eqref{dG} we find $\alpha R<d_G\sqrt{V_\infty}R\leq 2\sqrt{V_\infty}R$ and from \eqref{lambdaR} we also get
\[
R=\frac{2d_G\sqrt{V_\infty}}{\alpha+d_G\sqrt{V_\infty}}\lambda_R<2\lambda_R.
\]
Moreover, from \eqref{alfa} we know that
\[
\alpha<d_G\sqrt{V_\infty}\frac{1-\epsilon}{1+\epsilon},
\]
so that
\[
\alpha R<2d_G\sqrt{V_\infty}\lambda_R(1-\epsilon)\frac{R}{2\lambda_R(1+\epsilon)}<2d_G\sqrt{V_\infty}\lambda_R(1-\epsilon).
\]
As a consequence, from \eqref{fini} we can find  $c_3>0$ and $\xi>0$ such that for $R\geq\xi$ and $j=1,2,\cdots,\ell(G)$, 
		\begin{equation*}
			E_A(t\omega^{\lambda_R}(\cdot-Re_j))\leq m_0-c_3e^{-\alpha R},\quad \forall t\in[t_1,t_2].
		\end{equation*}
        Thanks to \eqref{eq:maxmax}, we obtain that 
        	\begin{equation*}
			E_A(t\omega^{\lambda_R}(\cdot-Re_j))\leq m_0-c_3e^{-\alpha R},\quad \forall t\geq 0,
		\end{equation*}
		which completes the proof.
	\end{proof}
	
\begin{proof}[{\rm \textbf{ Proof of Theorem \ref{existence2}}}]
Let $R=\xi$, where $\xi$	is given from Lemma \ref{symmetricmA<m0}. Recall the definition of the minimal cardinality  of $G$ and choose $x_0\in S^{2}$ such that 
\eqref{x0} holds. Moreover, choose $g_j\in G$ such that $g_jx_0=e_j$ for every $j=1\ldots, \ell(G)$, see \eqref{e}. Then, we define the  function
	\[
\Upsilon_R(x):=\sum_{j=1}^{\ell(G)}\eta(g_j)\omega^{\lambda_R}(x-Re_j).
\]	
Since  $\omega^{\lambda_R}$ is radially symmetric,  it is easy to check that $\Upsilon_R( gx)=\eta( g)\Upsilon_R(x)$ for all $ g\in G$ and $x\in\R$. 
Since $\lambda_R<d_GR/2$, we immediately see that the functions $\omega(\cdot-Re_i)$ and $\omega(\cdot-Re_j)$ have disjoint support sets if $i\neq j$. In view of Lemma \ref{properties} (d), 
        there exists $t_{\Upsilon_R}>0$ such that $t_{\Gamma_R}\Upsilon_R\in\mathcal{N}_A^\eta$.
        Therefore, from Lemma \ref{symmetricmA<m0}  we have
		\begin{equation*}
			E_A(t_{\Upsilon_R}\Upsilon_R)=\sum_{j=1}^{\ell(G)}E_A(t_{\Upsilon_R}\omega^{\lambda_R}(\cdot-Re_j))< \ell(G)m_0.
		\end{equation*}
Obviously, this implies that $m_A^\eta<\ell(G)m_0$.

Finally, by  definition of $m_A^\eta$, there exists a minimizing sequence $\{u_n\}\subset\mathcal{N}_A^\eta$ such that $E_A(u_n)\rightarrow m_A^\eta $. 
 By Lemma \ref{compactness} (b) and an argument similar to that used in the proof of Lemma \ref{b>m0}, we find that $E_A$ has a nontrivial critical point $u\in\HA^\eta$ with $E_A(u)=m_A^\eta$. 
	\end{proof}

    \noindent{\bf  Acknowledgements.}
This work is supported by the National Natural Science Foundation of China (Grant No. 12571120, 12271508), and Scientific Research Project of Education Department of Jilin Province (Grant
No. JJKH20261620KJ).

D. M. is a member of the {\em Gruppo Nazionale per l'Analisi Ma\-te\-ma\-ti\-ca, la Probabilit\`a e le loro Applicazioni}
(GNAMPA) of the {\em Istituto Nazionale di Alta Matematica} (INdAM) and he is  partly supported by FFABR {\it Fondo per il finanzia\-mento delle attivit\`a base di ricerca} 2017.
	
    \hspace*{\fill} \\
\noindent{\bf Data availability.}	The manuscript has no associated data.

	\hspace*{\fill} \\
\noindent{\bf Conflict of interest.}	The authors declare that they have no conflict of interest.

	\bibliographystyle{abbrv}
	\bibliography{reference}
\end{document}